\documentclass[letterpaper,12pt]{amsart}

\usepackage[divide={1in,*,1in}]{geometry}
\usepackage{amsfonts,amssymb}
\usepackage{mathtools}
\usepackage{stmaryrd}
\usepackage{pstricks,pst-node}
\usepackage{mywrap}
\usepackage{centerpict}
\usepackage{diagps,diagps-arrows}
\usepackage{cobordisms}

\usepackage[
		pdfauthor={Wojciech Lubawski and Krzysztof K. Putyra},
		pdftitle={Mirror links have dual odd and generalized Khovanov homology},
		colorlinks=true]{hyperref}
\usepackage[savepos]{zref}
\usepackage[all]{hypcap}		

\COBembeddedtrue
\def\COBxsize{0.4}

\def\fntCircle{\begingroup
	\psset{unit=2ex,linewidth=0.5pt,dotsep=1pt}%
	\begin{pspicture}[shift=-0.15](-0.5,-0.5)(0.5,0.5)%
			\pscircle(0,0){0.5}
	\end{pspicture}\endgroup}%

\def\drawSaddle{%
	\begin{centerpict}(-1pt,-2pt)(21pt,18pt)%
		\psset{linewidth=0.5pt}%
		\psline(17pt,16pt)(17pt,5pt)
		\psbezier( 0pt, 5pt)( 9pt, 3pt)( 9pt, 3pt)(17pt, 5pt)
		\psline[border=1pt]( 3pt,11pt)( 3pt,0pt)
		\psbezier[border=1pt](20pt,11pt)(12pt,12pt)(11pt,15pt)(17pt,16pt)
		\psbezier( 3pt, 0pt)(11pt, 2pt)(11pt, 2pt)(20pt, 0pt)
		\psbezier( 3pt,11pt)( 9pt,12pt)( 8pt,15pt)( 0pt,16pt)
		\psline(17pt,16pt)(17pt,14pt)
		\psline( 0pt,16pt)( 0pt,5pt)
		\psline(20pt,11pt)(20pt,0pt)
		\psbezier(6.67pt,14pt)(7pt,5pt)(13pt,5pt)(13.33pt,14pt)
	\end{centerpict}%
}

\def\trefoil#1#2#3{%
	\begin{centerpict}(-1,-0.8)(1,1.1)%
		\psbezier(0,1)( 0.5,1)( 0.59641,0.23302)( 0.34641,-0.2)			
		\psbezier(-0.86603,-0.5)(-1.11503,-0.06698)(-0.5,0.4)(0,0.4)		
		\psbezier( 0.86603,-0.5)( 0.61603,-0.93302)(-0.09641,-0.63302)(-0.34641,-0.2)		
		\psbezier[border=4pt](0,1)(-0.5,1)(-0.59641,0.23302)(-0.34641,-0.2)			
		\psbezier[border=4pt]( 0.86603,-0.5)( 1.11503,-0.06698)( 0.5,0.4)(0,0.4)		
		\psbezier[border=4pt](-0.86603,-0.5)(-0.61603,-0.93302)( 0.09641,-0.63302)( 0.34641,-0.2)		
		\psline[linewidth=0.5pt,arrowsize=5pt,arrowinset=0.4]{->}(0.625,0.01519)(0.225,0.70801)
		\psline[linewidth=0.5pt,arrowsize=5pt,arrowinset=0.4]{->}(0.3,-0.55)(-0.5,-0.55)
		\psline[linewidth=0.5pt,arrowsize=5pt,arrowinset=0.4]{->}(-0.625,0.01519)(-0.225,0.70801)
		\uput[ 30]( 0.475, 0.275){$1$}
		\uput[-90]( 0.000,-0.550){$2$}
		\uput[150](-0.475, 0.275){$3$}
	\end{centerpict}%
}

\def\trefoilshadow{%
	\psbezier(0,1)( 0.5,1)( 0.59641,0.23302)( 0.34641,-0.2)
	\psbezier(-0.86603,-0.5)(-1.11503,-0.06698)(-0.5,0.4)(0,0.4)
	\psbezier( 0.86603,-0.5)( 0.61603,-0.93302)(-0.09641,-0.63302)(-0.34641,-0.2)
	\psbezier(0,1)(-0.5,1)(-0.59641,0.23302)(-0.34641,-0.2)
	\psbezier( 0.86603,-0.5)( 1.11503,-0.06698)( 0.5,0.4)(0,0.4)
	\psbezier(-0.86603,-0.5)(-0.61603,-0.93302)( 0.09641,-0.63302)( 0.34641,-0.2)
}

\def\trefoilresolution#1#2#3{%
	\trefoilshadow
	\begingroup
		\psset{linestyle=none,fillstyle=solid,fillcolor=white}%
		\pscircle( 0    ,-0.55){0.2}
		\pscircle( 0.475,0.275){0.2}
		\pscircle(-0.475,0.275){0.2}
	\endgroup
	\ifx1#1\relax
		\psbezier(0.47105,0.46555)(0.48679,0.33682)(0.53343,0.254  )(0.63713,0.17613)
		\psbezier(0.45367,0.0879 )(0.48132,0.21841)(0.42964,0.30766)(0.30286,0.34897)
	\else
		\psbezier(0.47105,0.46555)(0.48679,0.33682)(0.42964,0.30766)(0.30286,0.34897)
		\psbezier(0.45367,0.0879 )(0.48132,0.21841)(0.53343,0.254  )(0.63713,0.17613)
		\ifx0#1\relax\else
			\psline[linewidth=0.5pt,arrowsize=3pt,arrowinset=0.4]{->}(0.625,0.01519)(0.225,0.70801)
		\fi
	\fi
	\ifx1#2\relax
		\psbezier(-0.16765,-0.64072)(-0.04831,-0.58998)( 0.04688,-0.58917)( 0.1662 ,-0.6401 )
		\psbezier( 0.15071,-0.43686)( 0.05151,-0.52606)(-0.05151,-0.52606)(-0.15071,-0.43686)
	\else
		\psbezier(-0.16765,-0.64072)(-0.04831,-0.58998)(-0.05151,-0.52606)(-0.15071,-0.43686)
		\psbezier( 0.15071,-0.43686)( 0.05151,-0.52606)( 0.04688,-0.58917)( 0.1662 ,-0.6401 )
		\ifx0#2\relax\else
			\psline[linewidth=0.5pt,arrowsize=3pt,arrowinset=0.4]{->}(0.4,-0.55)(-0.5,-0.55)
		\fi
	\fi
	\ifx1#3\relax
		\psbezier(-0.47105,0.46555)(-0.48679,0.33682)(-0.53343,0.254  )(-0.63713,0.17613)
		\psbezier(-0.45367,0.0879 )(-0.48132,0.21841)(-0.42964,0.30766)(-0.30286,0.34897)
	\else
		\psbezier(-0.47105,0.46555)(-0.48679,0.33682)(-0.42964,0.30766)(-0.30286,0.34897)
		\psbezier(-0.45367,0.0879 )(-0.48132,0.21841)(-0.53343,0.254  )(-0.63713,0.17613)
		\ifx0#3\relax\else
			\psline[linewidth=0.5pt,arrowsize=3pt,arrowinset=0.4]{->}(-0.625,0.01519)(-0.225,0.70801)
		\fi
	\fi
}

\def\pictRelS{\begin{centerpict}(0,0)(1,1)
	\psset{linewidth=0.5pt,dash=1pt 1.5pt,dimen=middle}
	\pscircle(0.5,0.5){0.4}
	\psellipticarc[linewidth=0.5\pslinewidth](0.5,0.5)(0.4,0.15){180}{360}
	\psellipticarc[linewidth=0.5\pslinewidth,linestyle=dashed](0.5,0.5)(0.4,0.15){0}{180}
\end{centerpict}}

\def\pictRelT{\begin{centerpict}(-0.5,-0.7)(0.5,0.7)
	\psset{linewidth=0.5pt,dash=1pt 1.5pt,dimen=middle}
	\psellipse(0,0)(0.45,0.6)
	\psellipticarc(0.3,0)(0.4,0.45){130}{230}
	\psclip{\psellipse[linestyle=none](0.3,0)(0.4,0.45)}
		\psellipticarc(-0.3,0)(0.4,0.45){-50}{50}
	\endpsclip
	\psellipticarc[linewidth=0.5\pslinewidth](0.275,0)(0.175,0.075){180}{360}
	\psellipticarc[linewidth=0.5\pslinewidth](-0.275,0)(0.175,0.075){180}{360}
	\psellipticarc[linewidth=0.5\pslinewidth,linestyle=dashed](0.275,0)(0.175,0.075){0}{180}
	\psellipticarc[linewidth=0.5\pslinewidth,linestyle=dashed](-0.275,0)(0.175,0.075){0}{180}
\end{centerpict}}

\def\pictRelTuCircles{%
	\psset{linewidth=0.5pt,dash=1pt 1.5pt,dimen=middle}
	\psellipticarc(0.2,0.1)(0.2,0.07\psxunit){-180}{0}
	\psellipticarc(1.0,0.1)(0.2,0.07\psxunit){-180}{0}
	\psellipticarc[linewidth=0.5\pslinewidth,linestyle=dashed](0.2,0.1)(0.2,0.07\psxunit){0}{180}
	\psellipticarc[linewidth=0.5\pslinewidth,linestyle=dashed](1.0,0.1)(0.2,0.07\psxunit){0}{180}
	\psellipse(0.2,1.4)(0.2,0.07\psxunit)
	\psellipse(1.0,1.4)(0.2,0.07\psxunit)
}

\def\pictRelTuL{\begin{centerpict}(-0.1,0)(1.3,1.5)
	\pictRelTuCircles
	\psline(0.0,0.1)(0.0,1.4)\psline(0.4,0.1)(0.4,1.4)
	\psbezier(0.8,0.1)(0.8,0.7)(1.2,0.7)(1.2,0.1)
	\psbezier(0.8,1.4)(0.8,0.8)(1.2,0.8)(1.2,1.4)
\end{centerpict}}

\def\pictRelTuR{\begin{centerpict}(-0.1,0)(1.3,1.5)
	\pictRelTuCircles
	\psbezier(0.0,0.1)(0.0,0.7)(0.4,0.7)(0.4,0.1)
	\psbezier(0.0,1.4)(0.0,0.8)(0.4,0.8)(0.4,1.4)
	\psline(0.8,0.1)(0.8,1.4)\psline(1.2,0.1)(1.2,1.4)
\end{centerpict}}

\def\pictRelTuB{\begin{centerpict}(-0.1,0)(1.3,1.5)
	\pictRelTuCircles
	\psbezier(0.0,0.1)(0.0,1.0)(1.2,1.0)(1.2,0.1)
	\psbezier(0.4,0.1)(0.35,0.5)(0.85,0.5)(0.8,0.1)
	\psbezier(0.0,1.4)(0.0,1.0)(0.4,1.0)(0.4,1.4)
	\psbezier(0.8,1.4)(0.8,0.8)(1.2,0.8)(1.2,1.4)
\end{centerpict}}

\def\pictRelTuT{\begin{centerpict}(-0.1,0)(1.3,1.5)
	\pictRelTuCircles
	\psbezier(0.0,1.4)(0.0,0.5)(1.2,0.5)(1.2,1.4)
	\psbezier(0.4,1.4)(0.35,1.0)(0.85,1.0)(0.8,1.4)
	\psbezier(0.0,0.1)(0.0,0.5)(0.4,0.5)(0.4,0.1)
	\psbezier(0.8,0.1)(0.8,0.7)(1.2,0.7)(1.2,0.1)
\end{centerpict}}

\input{images/pics-tangles.tex}
\def\defPicSmallCob#1#2{%
	\expandafter\def\csname fntCob#1\endcsname{%
		\begin{centerpict}(-0.1,0)(1.1,1.2)
			\psset{linewidth=0.15pt,dotsep=1pt,dash=1pt 1.5pt,linestyle=dotted,dimen=mid}
			\psset{linewidth=0.3pt,linestyle=solid}#2
		\end{centerpict}}%
}

\def\defPicCob#1#2{\expandafter\def\csname cobInsert@#1\endcsname{#2}}%

\def\psdash#1{#1[linestyle=dashed,linewidth=0.15pt]}


\def\insertCob#1{\begin{centerpict}(-0.1,0)(1.3,2)
	\psset{xunit=1.2cm,linewidth=0.15pt,dotsep=1pt,dash=1pt 1pt,linestyle=dotted,dimen=mid}
	\psset{linewidth=0.4pt,linestyle=solid}
	\csname cobInsert@#1\endcsname
\end{centerpict}}%

\def\fntCob#1{\csname fntCob#1\endcsname}


\def\cobRIhtop{%
	\psellipticarc(0.65,0)(0.15,0.1){-90}{90}
	\psbezier(0.13523, 0.13679)(0.45,-0.10)(0.3 , 0.1)(0.65, 0.1)
	\psbezier(0.27639,-0.17889)(0.35, 0.05)(0.45,-0.1)(0.65,-0.1)
}
\def\cobRIvtop{%
	\psellipse(0.65,0)(0.15,0.1)
	\psbezier(0.27639,-0.17889)(0.4,0)(0.4,0)(0.13523,0.13679)
}
\def\cobRIhbottom{%
	\psellipticarc(0.65,0)(0.15,0.1){-90}{0}
	\psbezier(0.27639,-0.17889)(0.35,0.05)(0.45,-0.1)(0.65,-0.1)
	\psclip{\psframe[linestyle=none](0,-0.2)(0.277,1)}
		\psbezier(0.13523,0.13679)(0.45,-0.1)(0.3,0.1)(0.65,0.1)
	\endpsclip
	\psdash\psellipticarc(0.65,0)(0.15,0.1){0}{90}
	\psclip{\psframe[linestyle=none](1,-0.2)(0.276,1)}
		\psdash\psbezier(0.13523,0.13679)(0.45,-0.1)(0.3,0.1)(0.65,0.1)
	\endpsclip
}
\def\cobRIvbottom{%
	\psellipticarc(0.65,0)(0.15,0.1){-180}{0}
	\psdash\psellipticarc(0.65,0)(0.15,0.1){0}{180}
	\psclip{\pspolygon[linestyle=none](0,-0.2)(0.4,-0.2)(0.4,-0.028)(0.277,-0.028)(0.277,0.2)(0,0.2)}
		\psbezier(0.27639,-0.17889)(0.4,0)(0.4,0)(0.13523,0.13679)
	\endpsclip
	\psclip{\psframe[linestyle=none](0.2763,-0.0284)(0.3564,0.2)}
		\psdash\psbezier(0.27639,-0.17889)(0.4,0)(0.4,0)(0.13523,0.13679)
	\endpsclip
}

\defPicSmallCob{RId}{%
	\psline(0.27639,0.02111)(0.27639,0.82111)
	\psline(0.13523,0.33679)(0.13523,1.13679)
	\psline(0.80000,0.20000)(0.80000,1.00000)
	\rput(0,0.2)\cobRIvbottom
	\rput(0,1.0)\cobRIhtop
	\psbezier(0.35637,0.17164)(0.37,0.7)(0.49,0.7)(0.5,0.2)
}

\defPicSmallCob{RIh}{%
	\psline(0.27639,0.02111)(0.27639,0.82111)
	\psline(0.13523,0.33679)(0.13523,1.13679)
	\psbezier(0.35637,0.97164)(0.37,0.4)(0.8,0.4)(0.8,0.2)
	\rput(0,0.2)\cobRIhbottom
	\rput(0,1.0)\cobRIvtop
	\psbezier(0.5,1)(0.5,0.6)(0.8,0.6)(0.8,1)
}

\defPicSmallCob{RIg}{%
	\psline(0.27639,0.02111)(0.27639,0.82111)
	\psline(0.13523,0.33679)(0.13523,1.13679)
	\psbezier(0.35637,0.17164)(0.37,0.8)(0.8,0.8)(0.8,1)
	\rput(0,0.2)\cobRIvbottom
	\rput(0,1.0)\cobRIhtop
	\psbezier(0.5,0.2)(0.5,0.6)(0.8,0.6)(0.8,0.2)
}

\defPicSmallCob{RIfa}{%
	\psline(0.27639,0.02111)(0.27639,0.82111)
	\psline(0.13523,0.33679)(0.13523,1.13679)
	\psbezier(0.35637,0.97164)(0.37,0.1)(0.8,0.5)(0.8,0.2)
	\rput(0,0.2)\cobRIhbottom
	\rput(0,1.0)\cobRIvtop
	\psellipticarc(0.65,0.75)(0.2,0.2){-180}{0}
	\psbezier(0.5,1)(0.5,0.9)(0.45,0.85)(0.45,0.75)
	\psbezier(0.8,1)(0.8,0.9)(0.85,0.85)(0.85,0.75)
	\psellipticarc(0.65,0.78)(0.1,0.1){-170}{-10}
	\psellipticarc(0.65,0.65)(0.1,0.1){40}{140}
}

\defPicSmallCob{RIfb}{%
	\psline(0.27639,0.02111)(0.27639,0.82111)
	\psline(0.13523,0.33679)(0.13523,1.13679)
	\psline(0.80000,0.20000)(0.80000,1.00000)
	\rput(0,0.2)\cobRIhbottom
	\rput(0,1.0)\cobRIvtop
	\psbezier(0.35637,0.97164)(0.37,0.5)(0.49,0.5)(0.5,1)
}



\def\cobRIIhT{%
	\psbezier(0.20,-0.160)(0.52,-0.03)(0.52,-0.03)(0.95,-0.087)
	\psbezier(0.05, 0.087)(0.48, 0.03)(0.48, 0.03)(0.80, 0.160)
}

\def\cobRIIvvT{%
	\psbezier(0.2,-0.16)(0.3,-0.05)(0.25,0)(0.05, 0.087)
	\psbezier(0.8, 0.16)(0.7, 0.05)(0.75,0)(0.95,-0.087)
	\psbezier(0.45,0.07)(0.61,0.10)(0.71,-0.04)(0.55,-0.07)
	\psbezier(0.45,0.07)(0.29,0.04)(0.39,-0.10)(0.55,-0.07)
}

\def\cobRIIvhT{%
	\psbezier(0.2,-0.16)(0.3,-0.05)(0.25,0)(0.05, 0.087)
	\psbezier(0.45,0.07)(0.29,0.04)(0.39,-0.10)(0.55,-0.07)
	\psbezier(0.80, 0.160)(0.75,0.00)(0.61, 0.10)(0.45, 0.07)
	\psbezier(0.95,-0.087)(0.73,0.06)(0.71,-0.04)(0.55,-0.07)
}

\def\cobRIIhvT{%
	\psbezier(0.8, 0.16)(0.7, 0.05)(0.75,0)(0.95,-0.087)
	\psbezier(0.45,0.07)(0.61,0.10)(0.71,-0.04)(0.55,-0.07)
	\psbezier(0.20,-0.160)(0.25, 0.00)(0.39,-0.10)(0.55,-0.07)
	\psbezier(0.05, 0.087)(0.27,-0.06)(0.29, 0.04)(0.45, 0.07)
}

\def\cobRIIhhT{%
	\psbezier(0.80, 0.160)(0.75,0.00)(0.61, 0.10)(0.45, 0.07)
	\psbezier(0.95,-0.087)(0.73,0.06)(0.71,-0.04)(0.55,-0.07)
	\psbezier(0.20,-0.160)(0.25, 0.00)(0.39,-0.10)(0.55,-0.07)
	\psbezier(0.05, 0.087)(0.27,-0.06)(0.29, 0.04)(0.45, 0.07)
}

\def\cobRIIhB{%
	\psbezier(0.20,-0.160)(0.52,-0.03)(0.52,-0.03)(0.95,-0.087)
	\psclip{\psframe[linestyle=none](0,-0.2)(0.2,0.2)}%
		\psbezier(0.05, 0.087)(0.48, 0.03)(0.48, 0.03)(0.80, 0.160)
	\endpsclip
	\psclip{\psframe[linestyle=none](0.2,-0.2)(1,0.2)}%
		\psdash\psbezier(0.05, 0.087)(0.48, 0.03)(0.48, 0.03)(0.80, 0.160)
	\endpsclip
}


\def\cobRIIvvB{%
	\psclip{\pspolygon[linestyle=none](0,-0.2)(0,0.2)(0.2,0.2)(0.2,-0.0664)(0.25,-0.0664)(0.368,0)(0.631,0)(0.75,0.0664)(1,0.0664)(1,-0.2)}%
		\psbezier(0.2,-0.16)(0.3,-0.05)(0.25,0)(0.05, 0.087)
		\psbezier(0.8, 0.16)(0.7, 0.05)(0.75,0)(0.95,-0.087)
		\psbezier(0.45,0.07)(0.61,0.10)(0.71,-0.04)(0.55,-0.07)
		\psbezier(0.45,0.07)(0.29,0.04)(0.39,-0.10)(0.55,-0.07)
	\endpsclip
	\psclip{\pspolygon[linestyle=none](0.2,0.2)(0.2,-0.0664)(0.25,-0.0664)(0.368,0)(0.631,0)(0.75,0.0664)(1,0.0664)(1,0.2)}
		\psdash\psbezier(0.2,-0.16)(0.3,-0.05)(0.25,0)(0.05, 0.087)
		\psdash\psbezier(0.8, 0.16)(0.7, 0.05)(0.75,0)(0.95,-0.087)
		\psdash\psbezier(0.45,0.07)(0.61,0.10)(0.71,-0.04)(0.55,-0.07)
		\psdash\psbezier(0.45,0.07)(0.29,0.04)(0.39,-0.10)(0.55,-0.07)
	\endpsclip
}

\def\cobRIIvhB{%
	\psclip{\pspolygon[linestyle=none](0,0.2)(0.2,0.2)(0.2,-0.0664)(0.25,-0.0664)(0.368,0)(1,0.11)(1,-0.2)(0,-0.2)}
		\psbezier(0.2,-0.16)(0.3,-0.05)(0.25,0)(0.05, 0.087)
		\psbezier(0.45,0.07)(0.29,0.04)(0.39,-0.10)(0.55,-0.07)
		\psbezier(0.95,-0.087)(0.73,0.06)(0.71,-0.04)(0.55,-0.07)
	\endpsclip
	\psclip{\pspolygon[linestyle=none](0.2,0.2)(0.2,-0.0664)(0.25,-0.0664)(0.368,0)(1,0.11)(1,0.2)}
		\psdash\psbezier(0.2,-0.16)(0.3,-0.05)(0.25,0)(0.05, 0.087)
		\psdash\psbezier(0.45,0.07)(0.29,0.04)(0.39,-0.10)(0.55,-0.07)
		\psdash\psbezier(0.80, 0.160)(0.75,0.00)(0.61, 0.10)(0.45, 0.07)
	\endpsclip
}

\def\cobRIIhvB{%
	\psclip{\pspolygon[linestyle=none](1,-0.2)(1,0.0664)(0.75,0.0664)(0.631,0)(0.2,-0.02)(0.2,0.2)(0,0.2)(0,-0.2)}
		\psbezier(0.8, 0.16)(0.7, 0.05)(0.75,0)(0.95,-0.087)
		\psbezier(0.45,0.07)(0.61,0.10)(0.71,-0.04)(0.55,-0.07)
		\psbezier(0.20,-0.160)(0.25, 0.00)(0.39,-0.10)(0.55,-0.07)
		\psbezier(0.05, 0.087)(0.27,-0.06)(0.29, 0.04)(0.45, 0.07)
	\endpsclip
	\psclip{\pspolygon[linestyle=none](1,0.2)(1,0.0664)(0.75,0.0664)(0.631,0)(0.2,-0.11)(0.2,0.2)}
		\psdash\psbezier(0.8, 0.16)(0.7, 0.05)(0.75,0)(0.95,-0.087)
		\psdash\psbezier(0.45,0.07)(0.61,0.10)(0.71,-0.04)(0.55,-0.07)
		\psdash\psbezier(0.05, 0.087)(0.27,-0.06)(0.29, 0.04)(0.45, 0.07)
	\endpsclip
}

\def\cobRIIhhB{%
	\psbezier(0.95,-0.087)(0.73,0.06)(0.71,-0.04)(0.55,-0.07)
	\psbezier(0.20,-0.160)(0.25, 0.00)(0.39,-0.10)(0.55,-0.07)
	\psclip{\psframe[linestyle=none](0,0.2)(0.2,-0.2)}
		\psbezier(0.05, 0.087)(0.27,-0.06)(0.29, 0.04)(0.45, 0.07)
	\endpsclip
	\psdash\psbezier(0.80, 0.160)(0.75,0.00)(0.61, 0.10)(0.45, 0.07)
	\psclip{\psframe[linestyle=none](0.2,0.2)(1,-0.2)}
		\psdash\psbezier(0.05, 0.087)(0.27,-0.06)(0.29, 0.04)(0.45, 0.07)
	\endpsclip
}

\defPicSmallCob{R2-d0*}{%
	\psline(0.05, 0.287)(0.05, 1.087)
	\psline(0.20, 0.040)(0.20, 0.840)
	\psdash\psline(0.80, 0.360)(0.80, 0.997)
	\psline(0.80, 0.997)(0.80, 1.160)
	\psline(0.95, 0.113)(0.95, 0.913)
	\psline(0.369,0.2)(0.369,1)
	\psline(0.25,0.1336)(0.25,0.9336)
	\rput(0,0.2)\cobRIIvhB
	\rput(0,1.0)\cobRIIvvT
	\psbezier(0.631,1)(0.631,0.3)(0.75,0.3)(0.75,1.0664)
}

\defPicSmallCob{R2-d*0}{%
	\psline(0.05, 0.287)(0.05, 1.087)
	\psline(0.20, 0.040)(0.20, 0.840)
	\psdash\psline(0.80, 0.360)(0.80, 0.987)
	\psline(0.80, 0.987)(0.80, 1.160)
	\psline(0.95, 0.113)(0.95, 0.913)
	\rput(0,0.2)\cobRIIvhB
	\rput(0,1.0)\cobRIIhhT
	\psbezier(0.369,0.2)(0.369,0.9)(0.25,0.9)(0.25,0.1336)
}

\defPicSmallCob{R2-d1*}{%
	\psline(0.05, 0.287)(0.05, 1.087)
	\psline(0.20, 0.040)(0.20, 0.840)
	\psdash\psline(0.80, 0.360)(0.80, 0.997)
	\psline(0.80, 0.997)(0.80, 1.160)
	\psline(0.95, 0.113)(0.95, 0.913)
	\rput(0,0.2)\cobRIIhhB
	\rput(0,1.0)\cobRIIhvT
	\psbezier(0.631,1)(0.631,0.3)(0.75,0.3)(0.75,1.0664)
}

\defPicSmallCob{R2-d*1}{%
	\psline(0.05, 0.287)(0.05, 1.087)
	\psline(0.20, 0.040)(0.20, 0.840)
	\psdash\psline(0.80, 0.360)(0.80, 0.997)
	\psline(0.80, 0.997)(0.80, 1.160)
	\psline(0.95, 0.113)(0.95, 0.913)
	\psline(0.631,0.2)(0.631,1)
	\psline(0.75,0.2664)(0.75,1.0664)
	\rput(0,0.2)\cobRIIvvB
	\rput(0,1.0)\cobRIIhvT
	\psbezier(0.369,0.2)(0.369,0.9)(0.25,0.9)(0.25,0.1336)
}

\defPicSmallCob{R2-h0*}{%
	\psline(0.05, 0.287)(0.05, 1.087)
	\psline(0.20, 0.040)(0.20, 0.840)
	\psdash\psline(0.80, 0.360)(0.80, 0.987)
	\psline(0.80, 0.987)(0.80, 1.160)
	\psline(0.95, 0.113)(0.95, 0.913)
	\psline(0.25,0.1336)(0.25,0.9336)
	\rput(0,0.2)\cobRIIvvB
	\rput(0,1.0)\cobRIIvhT
	\psbezier(0.75,0.2664)(0.75,0.9)(0.369,0.8)(0.369,1)
	\psbezier(0.369,0.2)(0.369,0.6)(0.631,0.6)(0.631,0.2)
}

\defPicSmallCob{R2-h*1}{%
	\psline(0.05, 0.287)(0.05, 1.087)
	\psline(0.20, 0.040)(0.20, 0.840)
	\psdash\psline(0.80, 0.360)(0.80, 0.997)
	\psline(0.80, 0.997)(0.80, 1.160)
	\psline(0.95, 0.113)(0.95, 0.913)
	\psline(0.75,0.2664)(0.75,1.0664)
	\rput(0,0.2)\cobRIIhvB
	\rput(0,1.0)\cobRIIvvT
	\psbezier(0.25,0.9336)(0.25,0.3)(0.631,0.4)(0.631,0.2)
	\psbezier(0.369,1)(0.369,0.6)(0.631,0.6)(0.631,1)
}

\defPicSmallCob{R2-F}{%
	\psline(0.05, 0.287)(0.05, 1.087)
	\psline(0.20, 0.040)(0.20, 0.840)
	\psdash\psline(0.80, 0.360)(0.80, 0.997)
	\psline(0.80, 0.997)(0.80, 1.160)
	\psline(0.95, 0.113)(0.95, 0.913)
	\rput(0,0.2)\cobRIIhB
	\rput(0,1.0)\cobRIIvvT
	\psbezier(0.369,1)(0.369,0.6)(0.631,0.6)(0.631,1)
	\psbezier(0.25,0.9336)(0.25,0.2336)(0.75,0.1664)(0.75,1.0664)
	\psbezier(0.534,0.148)(0.534,0.3)(0.52,0.4)(0.4923,0.4)
	\psdash\psbezier(0.466,0.252)(0.466,0.35)(0.475,0.4)(0.4923,0.4)
}

\defPicSmallCob{R2-G}{%
	\psline(0.05, 0.287)(0.05, 1.087)
	\psline(0.20, 0.040)(0.20, 0.840)
	\psdash\psline(0.80, 0.360)(0.80, 0.933)
	\psline(0.80, 0.933)(0.80, 1.160)
	\psline(0.95, 0.113)(0.95, 0.913)
	\rput(0,0.2)\cobRIIvvB
	\rput(0,1.0)\cobRIIhT
	\psbezier(0.369,0.2)(0.369,0.6)(0.631,0.6)(0.631,0.2)
	\psbezier(0.25,0.1336)(0.25,1.0336)(0.75,0.9664)(0.75,0.2664)
	\psbezier(0.534,0.948)(0.534,0.9)(0.52,0.8)(0.4923,0.8)
	\psclip{\psframe[linestyle=none](0.4,1.1)(0.5,0.935)}
		\psbezier(0.466,1.052)(0.466,0.85)(0.475,0.8)(0.4923,0.8)
	\endpsclip
	\psclip{\psframe[linestyle=none](0.4,0.7)(0.5,0.935)}
		\psdash\psbezier(0.466,1.052)(0.466,0.85)(0.475,0.8)(0.4923,0.8)
	\endpsclip
}

\def\pictRelS{\begin{centerpict}(-0.6,-0.5)(0.6,0.5)
	\psset{linewidth=0.5pt,dash=1pt 1.5pt,dimen=middle}%
	\pscircle(0,0){0.5}
	\psellipticarc(0,0)(0.5,0.15){-180}{0}
	\psdash\psellipticarc(0,0)(0.5,0.15){0}{180}
\end{centerpict}}

\def\pictRelT{\begin{centerpict}(-0.1,-0.7)(1.3,0.7)
	\psset{linewidth=0.5pt,dash=1pt 1.5pt,dimen=middle}
	\psdash\psellipticarc(0.2,0)(0.2,0.1){0}{180}
	\psdash\psellipticarc(1,0)(0.2,0.1){0}{180}
	\psellipticarc(0.2,0)(0.2,0.1){-180}{0}
	\psellipticarc(1,0)(0.2,0.1){-180}{0}
	\psbezier(0.0,0)(0.00,0.9)(1.20,0.9)(1.2,0)
	\psbezier(0.4,0)(0.35,0.4)(0.85,0.4)(0.8,0)
	\psline[linewidth=0.3pt,arrowsize=3pt]{<-}(0.45,0.1)(0.75,0.1)
	\psbezier(0.0,0)(0.00,-0.9)(1.20,-0.9)(1.2,0)
	\psbezier(0.4,0)(0.35,-0.4)(0.85,-0.4)(0.8,0)
	\psline[linewidth=0.3pt,border=1pt,arrowsize=3pt]{->}(0.40,-0.4)(0.70,-0.1)
\end{centerpict}}

\def\pictRelTuCircles{%
	\psset{linewidth=0.5pt,dash=1pt 1.5pt,dimen=middle}
	\psellipticarc(0.2,0.1)(0.2,0.07\psxunit){-180}{0}
	\psellipticarc(1.0,0.1)(0.2,0.07\psxunit){-180}{0}
	\psdash\psellipticarc(0.2,0.1)(0.2,0.07\psxunit){0}{180}
	\psdash\psellipticarc(1.0,0.1)(0.2,0.07\psxunit){0}{180}
	\psellipse(0.2,1.4)(0.2,0.07\psxunit)
	\psellipse(1.0,1.4)(0.2,0.07\psxunit)
}

\def\pictRelTuL{\begin{centerpict}(-0.1,0)(1.3,1.5)
	\pictRelTuCircles
	\psline(0.0,0.1)(0.0,1.4)\psline(0.4,0.1)(0.4,1.4)
	\psbezier(0.8,0.1)(0.8,0.7)(1.2,0.7)(1.2,0.1)
	\psbezier(0.8,1.4)(0.8,0.8)(1.2,0.8)(1.2,1.4)
\end{centerpict}}

\def\pictRelTuR{\begin{centerpict}(-0.1,0)(1.3,1.5)
	\pictRelTuCircles
	\psbezier(0.0,0.1)(0.0,0.7)(0.4,0.7)(0.4,0.1)
	\psbezier(0.0,1.4)(0.0,0.8)(0.4,0.8)(0.4,1.4)
	\psline(0.8,0.1)(0.8,1.4)\psline(1.2,0.1)(1.2,1.4)
\end{centerpict}}

\def\pictRelTuB{\begin{centerpict}(-0.1,0)(1.3,1.5)
	\pictRelTuCircles
	\psbezier(0.0,0.1)(0.0,1.0)(1.2,1.0)(1.2,0.1)
	\psbezier(0.4,0.1)(0.35,0.5)(0.85,0.5)(0.8,0.1)
	\psline[linewidth=0.3pt,arrowsize=3pt]{<-}(0.45,0.2)(0.75,0.2)
	\psbezier(0.0,1.4)(0.0,1.0)(0.4,1.0)(0.4,1.4)
	\psbezier(0.8,1.4)(0.8,0.8)(1.2,0.8)(1.2,1.4)
\end{centerpict}}

\def\pictRelTuT{\begin{centerpict}(-0.1,0)(1.3,1.5)
	\pictRelTuCircles
	\psbezier(0.0,1.4)(0.0,0.5)(1.2,0.5)(1.2,1.4)
	\psbezier(0.4,1.4)(0.35,1.0)(0.85,1.0)(0.8,1.4)
	\psline[linewidth=0.3pt,border=1pt,arrowsize=3pt]{->}(0.40,1.0)(0.70,1.3)
	\psbezier(0.0,0.1)(0.0,0.5)(0.4,0.5)(0.4,0.1)
	\psbezier(0.8,0.1)(0.8,0.7)(1.2,0.7)(1.2,0.1)
\end{centerpict}}


\newtheorem{theorem}{Theorem}[section]
\newtheorem{lemma}[theorem]{Lemma}

\newtheorem{proposition}[theorem]{Proposition}
\newtheorem{corollary}[theorem]{Corollary}

\theoremstyle{definition}

\newtheorem{definition}[theorem]{Definition}
\newtheorem{example}[theorem]{Example}
\newtheorem{remark}[theorem]{Remark}
\newtheorem{observation}[theorem]{Observation}


\definecolor{internalLink}{rgb}{0.5,0,0}
\definecolor{citeLink}{rgb}{0,0.5,0}
\definecolor{urlLink}{rgb}{0,0,0.5}

\hypersetup{linkcolor=internalLink}
\hypersetup{citecolor=citeLink}
\hypersetup{urlcolor=urlLink}


\DeclareMathOperator{\id}{id}		

\def\blank{\raisebox{0.3ex}{\underline{\ \ }}}		
\def\cocolon{%
	\nobreak\mskip6mu plus1mu\mathpunct{}%
	\nonscript\mkern-\thinmuskip{:}%
	\mskip2mu\relax
}


\newcommand*{\bS}{\mathbb{S}^1}			
\newcommand*{\Z}{\mathbb{Z}}				
\newcommand*{\R}{\mathbb{R}}				

\newcommand*{\Aut}{\mathrm{Aut}}


\let\scalars\Bbbk
\def\invScalars{\scalars^*}

\def\Zev{\Z_{ev}}						
\def\Zodd{\Z_{odd}}					
\def\Zpi{\mathbb{Z}_{\pi}}	\def\ZpiLong{\mathbb{Z}[\pi]/(\pi^2-1)}

\makeatletter
\def\scalarsLong{\@ifstar
	{\mathbb{Z}[\permMM,\permSS,\permMS^{\pm1}]/(\permMM^2{=}\permSS^2{=}1)}%
	{\mathbb{Z}[\permMM,\permSS,\permMS^{\pm1}]/(\permMM^2=\permSS^2=1)}%
}
\makeatother


\newcommand*{\cat}[1]{\textrm{\normalfont\textbf{#1}}}		
\newcommand*{\catAdd}[1]{\mathrm{Mat}(#1)}

\newcommand*{\Mor}{\mathrm{Mor}}

\newcommand*{\F}{\mathcal{F}}
\newcommand*{\FA}{\mathcal{F}}
\newcommand*{\Fpi}{\mathcal{F}_{\!\pi}}
\newcommand*{\Fev}{\mathcal{F}_{\!ev}}

\newcommand*\Mod[1]{\cat{Mod}_{#1}}
\newcommand*{\Kom}{\textbf{Kom}}

\newcommand*{\cone}{C}
\newcommand*\Hom{\mathrm{Hom}}


\def\newcobcategory#1#2{%
	\expandafter\def\csname #1\endcsname{#2}
	\expandafter\def\csname #1L\endcsname{#2_{/\ell}}
	\expandafter\def\csname #1D\endcsname{#2_{\bullet}}
}

\newcobcategory{Cob}{\cat{Cob}}
\newcobcategory{ChCob}{\cat{ChCob}}
\newcobcategory{kChCob}{\scalars\cat{ChCob}}


\newcommand*{\vv}[2]{v_{#1}\otimes v_{#2}}
\newcommand*\vvv[1]{\vvvv#1,\endvvv,}

\def\vvvv#1,{v_{#1}\vvvvv}
\def\vvvvv#1,{%
	\ifx\endvvv#1\relax\else
		\otimes\ifx*#1\else v_{#1}\fi
	\expandafter\vvvvv\fi}


\DeclareMathOperator{\ldsum}{%
	\begin{pspicture}(-0.2em,0)(0.8em,0.8em)
		\psset{unit=1em,linewidth=0.4pt,arrowsize=4pt,arrowlength=0.8,arrowinset=0.5}%
		\psline(0,0.7)(0,0)(0.6,0)(0.6,0.7)
		\psline{->}(0.0,0.5)(0.0,0.2)
		\psline{->}(0.6,0.2)(0.6,0.5)
	\end{pspicture}%
}
\DeclareMathOperator{\rdsum}{%
	\begin{pspicture}(-0.2em,0)(0.8em,0.8em)
		\psset{unit=1em,linewidth=0.4pt,arrowsize=4pt,arrowlength=0.8,arrowinset=0.5}%
		\psline(0,0.7)(0,0)(0.6,0)(0.6,0.7)
		\psline{->}(0.6,0.7)(0.6,0.2)
		\psline{->}(0,0)(0,0.5)
	\end{pspicture}%
}


\makeatletter
\def\KhBracket{\@ifstar\KhBracketScaled\KhBracketSimple}
\def\KhCube{\@ifstar\KhCubeScaled\KhCubeSimple}
\def\KhCubeSigned{\@ifstar\KhCubeSignedScaled\KhCubeSignedSimple}
\makeatother

\newcommand*{\KhCubeScaled}[1]{\mathcal{I}\left(#1\right)}
\newcommand*{\KhCubeSimple}[1]{\mathcal{I}(#1)}
\newcommand*{\KhCubeSignedScaled}[2]{\mathcal{I}^{#2}\left(#1\right)}
\newcommand*{\KhCubeSignedSimple}[2]{\mathcal{I}^{#2}(#1)}
\newcommand*{\KhBracketScaled}[1]{\left\llbracket#1\right\rrbracket}
\newcommand*{\KhBracketSimple}[1]{\llbracket#1\rrbracket}

\def\defVersions#1#2{%
	\expandafter\def\csname #1\endcsname{#2}%
	\expandafter\def\csname E#1\endcsname{#2^{ev}}%
	\expandafter\def\csname O#1\endcsname{#2^{odd}}%
	\expandafter\def\csname G#1\endcsname{#2^{cov}}%
}

\newcommand*{\KhCom}{K\!h}

\newcommand*{\Kh}{\mathcal{H}}
\newcommand*{\EKh}{\Kh_{ev}}
\newcommand*{\OKh}{\Kh_{odd}}
\newcommand*{\GKh}{\Kh_{\pi}}


\def\chdeg{\mathrm{deg}\mskip\thinmuskip}
\def\sdeg{\mathrm{sdeg}\mskip\thinmuskip}
\def\wdeg#1{\|#1\|}

\def\sdegCob[#1){\sdeg\left(\textcobordism[#1)\right)}
\def\sdegNum#1#2{\left[#1 \atop #2\right]}
\def\sdegNumTwice#1{\left[#1 \atop #1\right]}

\def\dsdegNum#1#2{\displaystyle{\left[#1 \atop #2\right]}}
\def\dsdegNumTwice#1{\displaystyle{\left[#1 \atop #1\right]}}


\def\permMM{X}
\def\permSS{Y}
\def\permMS{Z}
\def\permSM{Z^{-1}}


\makeatletter

\def\arXiv{\@ifstar\arXiv@@\arXiv@}
\def\arXiv@#1{\href{http://front.math.ucdavis.edu/#1}{arXiv:#1}}
\def\arXiv@@#1#2{\href{http://front.math.ucdavis.edu/#2}{arXiv:#1}}

\makeatother


\allowdisplaybreaks

\psset{unit=7mm}

\hyphenation{
	par-ti-cu-lar
}

\selectfont

\begin{document}

\title[Mirror links have dual odd and generalized Khovanov homology]{Mirror links have dual odd and generalized\\Khovanov homology}
\author{Wojciech Lubawski}
\author{Krzysztof K. Putyra}

\date\today

\begin{abstract}
	We show that the~generalized Khovanov homology, defined by the~second author in the~framework of chronological cobordisms, admits a~grading by the~group $\Z\times\Z_2$, in which all homogeneous summands are isomorphic to the~unified Khovanov homology defined over the~ring $\Zpi:=\ZpiLong$ (here, setting $\pi$ to $\pm 1$ results either in even or odd Khovanov homology). The~generalized homology has $\scalars:=\scalarsLong*$ as coefficients, and the~above implies that most of automorphisms of $\scalars$ fix the~isomorphism class of the~generalized homology regarded as $\scalars$-modules, so that the~even and odd Khovanov homology are the~only two specializations of the~invariant. In particular, switching $\permMM$ with $\permSS$ induces a~derived isomorphism between the~generalized Khovanov homology of a~link $L$ with its dual version, i.e.\ the~homology of the~mirror image $L^!$, and we compute an~explicit formula for this map. When specialized to integers it descends to a~duality isomorphism for odd Khovanov homology, which was conjectured by A.~Shumakovitch.
\end{abstract}

\maketitle

\section{Introduction}\label{sec:intro}
In his seminal paper \cite{KhHom} Khovanov constructed for every link $L$ a~sequence of graded abelian groups $\EKh^i(L)$ called the~\emph{Khovanov homology} of the~link $L$. The~graded Euler characteristic of $\EKh(L)$ is the~famous Jones polynomial $J_L(q)$, of which many properties have an~interpretation at the~higher level of homology groups. For instance, for a~mirror link $L^!$ we have $J_{L^!}(q) = J_L(q^{-1})$, which corresponds to duality of Khovanov homology in the~derived sense, i.e.\ there is an~isomorphism $C(L^!)\cong C(L)^*:=\Hom(C(L);\Z)$ between complexes of free groups that compute the~homology.\footnote{
	One can regard this as an~isomorphism between the~Khovanov cohomology of a~link $L$ and the~Khovanov homology of the~mirror link $L^!$.}
Such an~isomorphism was explicitly constructed already in \cite{KhHom}, but its existence can be also deduced from an~extension of the~homology to link cobordisms \cite{KhLinkCobs,DrorCobs}.

The~Khovanov homology is not the~only categorification of the~Jones polynomial. A~distinct homology theory $\OKh(L)$ was discovered by Ozsv\'ath, Rasmussen and Szab\'o \cite{OddKh}, which they called the~\emph{odd Khovanov homology}. Thence, we shall refer to the~original construction as \emph{even}. Both theories agree when regarded with coefficients in the~two-element field $\mathbb{F}_2$, but they are totally different over integers --- there are pairs of knots with the~same homology of one type but different of the~other \cite{ShumComp}. Computer-based calculation revealed the~duality phenomenon for odd homology, but the~theoretical explanation was missing: neither an~extension to link cobordisms of the~odd theory nor an~explicit isomorphism between complexes was known.

\wrapfigure[r]<3>{%
	\psset{unit=1cm}%
	\begin{pspicture}(-2.4,-1ex)(2.4,1.9)
		\diagnode cov( 0,1.55)[\Kh(L)]
		\diagnode  ev(-1.7,0)[\EKh(L)]
		\diagnode odd( 1.7,0)[\OKh(L)]
		\diagarrow|b{npos=0.6,labelsep=3pt}|[cov`ev;\permMM,\permSS,\permMS\,\mapsto<0.7em>\mathrlap1]
		\diagarrow|a{npos=0.65,labelsep=3pt}|[cov`odd;\substack{%
					\ \mathllap{\permMM,\permMS\,}\mapsto<0.7em> 1\phantom{-}\\
					\ \mathllap{\permSS\,}\mapsto<0.7em>-1}]
	\end{pspicture}}%
Both theories were unified by one of the~authors \cite{KhUnified,ChCob}. The~\emph{generalized Khovanov homology} $\Kh(L)$ is a~sequence of modules over the~ring $\scalars:=\scalarsLong$, and both even and odd homology can be recovered by specifying the~generators $\permMM$, $\permSS$, and $\permMS$ into integers. More precisely, the~even and odd homology are isomorphic to $\Kh(L;\Z)$ for various $\scalars$-module structures on $\Z$ (see the~diagram to the~right). This leads into eight possible homology theories, half of which were easily shown to be redundant: multiplying the~action on $\Z$ of each generator $\permMM$, $\permSS$ and $\permMS$ by $-1$ does not change $\Kh(L;\Z)$ \cite{KhUnified}. However, four choices are left and we prove in this paper that they can be further reduced to just two cases: the~even and odd homology. In particular, the~most general theory in our framework, $\GKh(L)$, is defined over the~ring $\Zpi := \ZpiLong$. We prove it by introducing a~new grading by $\Z_2\times\Z$, in which the~generators $\permMM$, $\permSS$ and $\permMS$ has nontrivial degrees. We call it a~\emph{splitting degree}, because it decomposes the~generalized Khovanov homology $\Kh(L)$ into a~bunch of copies of $\GKh(L)$. An~interesting feature of this degree is that it is not multiplicative with respect to the~tensor product.

We use the~new grading to construct an~explicit duality isomorphism at the~end of this paper. The~main difficulty is that, after dualizing the~complex, the~roles of parameters $\permMM$ and $\permSS$ are interchanged. In particular, the~Frobenius-like algebra $A=\scalars v_+\oplus\scalars v_-$, associated to a~circle, is different from its dual one. We overcome this with a~help from the~splitting degree: not only $v_+$ and $v_-$ have different degrees, but also all generators of $A\otimes A$. Roughly speaking, we define a~family of isomorphisms $A^{\otimes k}\to^\cong (A^*)^{\otimes k}$ that intertwines the~algebra and coalgebra operations.

It is worth to notice that the~generalized Khovanov homology $\Kh(L)$ is conjectured to extend projectively to link cobordisms \cite{ChCob}. The~word `projective' means the~assignment
\begin{displaymath}
	\Big\{\textnormal{link cobordisms}\Big\}\to\Big\{\textnormal{chain maps}\Big\}
\end{displaymath}
is defined only up to global invertible scalars. This would be enough to show that $\Kh(L)$ possesses the~duality property similar to the~one for $\EKh(L)$ and, in particular, this would show indirectly the~duality of odd Khovanov homology.

\subsection*{Organization of the~paper}
We first describe briefly the~construction of the~generalized Khovanov homology, including a~discussion on chronological cobordisms and chronological TQFTs. The~splitting degree is defined in Section~\ref{sec:s-degree} for both chronological cobordisms and modules over $\scalars$. In Section~\ref{sec:invariance} we refine the~generalized Khovanov complex to a~graded complex, proving its invariance under Reidemeister moves. The~last section contains the~main results of this paper: the~decomposition of the~generalized Khovanov homology $\Kh(L)$ into a~bunch of copies of the~unifying one $\Kh_\pi(L)$, and the~duality isomorphism between $\Kh(L^!)$ and $\Kh(L)^*$ as well as for the~unifying and odd Khovanov homologies.

\section{Basic definitions}

\subsection{Chronological cobordisms}\label{sec:cobs}
\begin{definition}
	Let $W$ be a~cobordism with a~Riemann metric. A~\emph{chronology} on $W$ consists of a~Morse function $h\colon W\to I$ that separates critical points, and a~choice of an~orientation of $E^-(p)$, the~space of unstable directions in the~gradient flow induced by $h$, at each critical point $p$. We require $h^{-1}(0)$ and $h^{-1}(1)$ to be the~input and output of $W$ respectively.
\end{definition}

Chronological cobordisms admit two disjoint unions: the~`left-then-right' $W\ldsum W'$ and the~`right-then-left' one $W\rdsum W'$. Both are diffeomorphic to the~standard disjoint union $W\sqcup W'$, but to avoid a~situation with two critical points at the~same level, one has to pull all critical points of $W$ below $\frac{1}{2}$ and those of $W'$ over $\frac{1}{2}$ (for $\ldsum$) or the~other way (for $\rdsum$):
\begin{displaymath}
	\textcobordism[2](M)(sI)\textcobordism[1](sI)(S)
		\from<3em>^{\ldsum}
	\left(\textcobordism[2](M), \textcobordism[1](S)\right)
		\to<3em>^{\rdsum}
	\textcobordism[2](sI)(M)\hskip 0.3\psxunit\textcobordism[1](S)(sI)
\end{displaymath}
A~standard argument from Morse theory shows that every 2-dimensional chronological cobordism can be built from six surfaces:
\begin{equation}\label{diag:mor-gens}\psset{unit=1cm}\begin{centerpict}(12.4,2.4)
	\COBmergeFrLeft(0,1.1)\COBsplitFrBack(2.6,1.1)
	\COBbirth(5.2,1.1)\COBposDeath(7.0,1.1)\COBnegDeath(9.2,1.1)
	\COBpermutation(11.2,1.1)
	\rput[B](0.6,1.5ex){\textnormal{a~merge}}
	\rput[B](2.8,1.5ex){\textnormal{a~split}}
	\rput[B](5.0,1.5ex){\textnormal{a~birth}}
	\rput[B](7.2,1.5ex){\parbox{10ex}{\centering\textnormal{a~positive death}}}
	\rput[B](9.4,1.5ex){\parbox{10ex}{\centering\textnormal{a~negative death}}}
	\rput[B](11.8,1.5ex){\textnormal{a~twist}}
\end{centerpict}\end{equation}
The~little arrows visualize orientations of critical points. One merge and one split is sufficient, as the~little arrow can be reversed by composing the~cobordism with the~twist.

\begin{definition}
	Define the~\emph{chronological degree} $\chdeg W\in \Z\times\Z$ of a~chronological cobordism $W$ by setting
	\begin{equation}
		\chdeg W = (\#\text{births}-\#\text{merges}, \#\text{deaths}-\#\text{splits}).
	\end{equation}
\end{definition}

\noindent
The~chronological degree is clearly additive with respect to composition of chronological cobordisms as well as to any of the~disjoint sums.

\begin{lemma}\label{lem:chdeg-vs-bdry}
	Given a~chronological cobordism $W$ of degree $\chdeg W = (a,b)$ with $n$ inputs and $m$ outputs,
	$a+n = b+m$.
\end{lemma}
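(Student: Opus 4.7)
The plan is to read off $a+n - b - m$ from how a Morse-theoretic decomposition of $W$ interacts with the level sets, so essentially we just need to bookkeep how each elementary generator from diagram~\eqref{diag:mor-gens} affects the number of circles at a regular level.

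First I would invoke the Morse-theoretic decomposition mentioned after Definition~2.1: factor $W$ as a composition of the six elementary cobordisms (merge, split, birth, positive death, negative death, twist), possibly disjoint-unioned with identity strands. Let $\#b, \#d, \#m, \#s$ denote the numbers of births, deaths, merges, and splits respectively (twists and the sign on the death do not enter the chronological degree).

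Next I would compute, for each generator $g$, the quantity $\Delta(g) := (\#\text{output circles of }g) - (\#\text{input circles of }g)$. Inspection of the pictures gives
\begin{displaymath}
	\Delta(\text{birth}) = +1, \quad
	\Delta(\text{split}) = +1, \quad
	\Delta(\text{merge}) = -1, \quad
	\Delta(\text{death}) = -1, \quad
	\Delta(\text{twist}) = 0.
\end{displaymath}
Since these $\Delta$ values are additive under composition and disjoint union (being just differences of counts of boundary circles), summing over the decomposition yields
\begin{displaymath}
	m - n \;=\; \#b + \#s - \#m - \#d \;=\; (\#b - \#m) - (\#d - \#s) \;=\; a - b.
\end{displaymath}
Rearranging gives $a + n = b + m$, as claimed.

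The only thing to verify carefully is that the identity is independent of the chosen decomposition, but this is automatic because both sides of $a+n=b+m$ depend only on $W$ itself: $n$ and $m$ are determined by $\partial W$, and $a,b$ are given intrinsically by Definition~2.3 as counts of critical points of each Morse index together with the orientation data. Thus no genuine obstacle arises; the lemma is essentially a restatement of the Euler-characteristic-type balance between index-$0$ and index-$2$ critical points on one hand, and index-$1$ critical points together with boundary components on the other.
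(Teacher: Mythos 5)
Your proof is correct and follows essentially the same route as the paper, whose proof is simply the one-line observation that the identity (equivalently $a-b=m-n$, which is additive under composition and disjoint union) can be checked on the generating cobordisms of~\eqref{diag:mor-gens}; you have just written out that bookkeeping explicitly.
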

\begin{proof}
	Straightforward, by checking for generating cobordisms \eqref{diag:mor-gens}.
\end{proof}

Choose a~ring $\scalars:=\scalarsLong$ and let $\kChCob$ be a~$\scalars$-linear category with finite disjoint unions of circles as objects, and $\scalars$-linear combinations of chronological cobordisms as morphisms, modulo the~following \emph{chronological relations}:
\begin{gather}
	%
		\label{rel:reverse-orientation}
		\textcobordism[2](M-R) = \permMM\textcobordism[2](M-L)
			\hskip 1cm
		\textcobordism[1](S-F) = \permSS\textcobordism[1](S-B)
			\hskip 1cm
		\textcobordism[1](sD-) = \permSS\textcobordism[1](sD+)
	\\
	%
		\textcobordism[1](sI)(I) = \textcobordism*[1](slB)(M-L)
			\hskip 1.5cm
		\textcobordism[1](I)(sI) = \textcobordism*[1](S-B)(srD+,2)
			\hskip 1.5cm
		\textcobordism[1](I)(sI) = \textcobordism*[1](S-B)(slD-,1)
	\\
	%
		\textcobordism*[3](M-L)(M-L) = \permMM\textcobordism*[3](M-L,2)(M-L)
			\hskip 1.5cm
		\textcobordism*[1](S-B)(S-B) = \permSS\textcobordism*[1](S-B)(S-B,2)
	\\
	%
		\textcobordism*[2](S-B)(M-L,2) = \permMS\textcobordism*[2](M-L)(S-B)
															 = \textcobordism*[2](S-B,2)(M-L)
	\\
	%
		\label{rel:disjoint-union}
		\begin{centerpict}(-0.1,-0.1)(3.5,2.5)
			\COBcylinder(0.1,0)(0.1,2.4)\COBcylinder(1.0,0)(1.0,2.4)
			\COBcylinder(2.0,0)(2.0,2.4)\COBcylinder(2.9,0)(2.9,2.4)
			\rput[c](0.77,0.1){$\scriptstyle\cdots$}\rput[c](0.77,2.25){$\scriptstyle\cdots$}
			\rput[c](2.67,0.1){$\scriptstyle\cdots$}\rput[c](2.67,2.25){$\scriptstyle\cdots$}
			\psframe[framearc=0.5,fillstyle=solid](-0.05,0.3)(1.55,1.2)
			\psframe[framearc=0.5,fillstyle=solid]( 1.85,1.2)(3.45,2.1)
			\rput[c](0.8,0.75){$W'$}
			\rput[c](2.7,1.65){$W\phantom'$}
		\end{centerpict}
			= \lambda(\chdeg W,\chdeg W')
		\begin{centerpict}(-0.1,-0.1)(3.5,2.5)
			\COBcylinder(0.1,0)(0.1,2.4)\COBcylinder(1.0,0)(1.0,2.4)
			\COBcylinder(2.0,0)(2.0,2.4)\COBcylinder(2.9,0)(2.9,2.4)
			\rput[c](0.77,0.1){$\scriptstyle\cdots$}\rput[c](0.77,2.25){$\scriptstyle\cdots$}
			\rput[c](2.67,0.1){$\scriptstyle\cdots$}\rput[c](2.67,2.25){$\scriptstyle\cdots$}
			\psframe[framearc=0.5,fillstyle=solid](-0.05,1.2)(1.55,2.1)
			\psframe[framearc=0.5,fillstyle=solid]( 1.85,0.3)(3.45,1.2)
			\rput[c](0.8,1.65){$W'$}
			\rput[c](2.7,0.75){$W\phantom'$}
		\end{centerpict}
\end{gather}
where $W$ and $W'$ are any cobordisms and $\lambda(a,b,a',b') = \permMM^{aa'}\permSS^{bb'}\permMS^{ab'-a'b}$. We proved in \cite{ChCob} the~following non-degeneracy result for $\kChCob$.

\begin{proposition}\label{prop:kChCob-nondeg}
	Suppose $kW=0$ for a~chronological cobordism $W$ and a~nonzero $k\in\scalars$. Then $W$ has either positive genus or at least two closed components, and $k$ is divisible by $(\permMM\permSS-1)$. In particular, cobordisms cannot be annihilated by monomials.
\end{proposition}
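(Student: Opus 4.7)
The proof rests on the observation that every relation in~\eqref{rel:reverse-orientation}--\eqref{rel:disjoint-union} has the form $W_1 = \lambda W_2$ with $\lambda$ a monomial in $\permMM, \permSS, \permMS^{\pm 1}$, hence a unit of $\scalars$. Reducing any chronological cobordism $W$ to a fixed normal form $W_{\text{std}}$ via the relations therefore yields an equality $W = c_W\cdot W_{\text{std}}$ with $c_W\in\scalars^{\times}$, and the annihilator of $W$ in $\kChCob$ is generated by the ``monodromy differences'' $c-c'$ coming from distinct reduction paths. The plan is to establish a normal form---e.g., critical points sorted by height with each generator in~\eqref{diag:mor-gens} appearing in a fixed orientation---and then compute this monodromy via Cerf theory, which tells us any two reductions are related by a finite chain of elementary moves (swaps of critical points, orientation reversals, handle slides), each multiplying by a unit.

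If $W$ has genus~$0$ with at most one closed component, every loop of elementary moves accumulates trivial scalar: relation~\eqref{rel:disjoint-union} is involutive in the sense that $\lambda(\alpha,\beta)\lambda(\beta,\alpha)=1$, Yang--Baxter consistency for triple swaps is automatic because $\scalars$ is commutative, and the identities $\permMM^2=\permSS^2=1$ control orientation reversals. Hence $c_W$ is a well-defined unit, and applying the generalized Khovanov TQFT $\F$ of~\cite{ChCob} shows $\F(W_{\text{std}})\neq 0$ with trivial $\scalars$-annihilator, forcing $k=0$ whenever $kW=0$. This proves the first (topological) conclusion of the proposition: if $W$ is annihilated by a nonzero $k$, it must have positive genus or at least two closed components.

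When $W$ has positive genus or at least two closed components, the space of chronologies acquires a non-trivial loop: one can slide a saddle around a handle, or cycle two closed components past one another. A computation via~\eqref{rel:reverse-orientation}--\eqref{rel:disjoint-union} combined with the bi-degrees from Lemma~\ref{lem:chdeg-vs-bdry}---each closed component has bi-degree $(a,a)$ and a sphere gives $(1,1)$---shows that the accumulated scalar along such a loop equals $\permMM\permSS$, whence $(\permMM\permSS - 1)\in\mathrm{ann}(W)$. For the reverse inclusion, pass to the quotient $\scalars/(\permMM\permSS-1)\cong\Zpi[\permMS^{\pm 1}]$: over this ring the unified Khovanov TQFT of~\cite{KhUnified} is non-degenerate, so $kW=0$ forces $(\permMM\permSS-1)\mid k$. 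The principal obstacle is this monodromy computation, specifically verifying that the handle-slide loop really accumulates exactly $\permMM\permSS$ and not a more general element involving $\permMS$; this will require careful bookkeeping of the orientations of critical points as they are transported through each intermediate Cerf move, and matching each local application of~\eqref{rel:reverse-orientation}--\eqref{rel:disjoint-union} against the final accumulated scalar.
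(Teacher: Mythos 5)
You should first note that this paper does not actually prove Proposition~\ref{prop:kChCob-nondeg}: it is imported verbatim from \cite{ChCob}, so the only meaningful comparison is with the proof there. Your general mechanism is the right one, and it is essentially the mechanism used in \cite{ChCob}: since every chronological relation has the shape $W_1=\lambda W_2$ with $\lambda$ an invertible monomial, each morphism module of $\kChCob$ splits as a direct sum of cyclic modules $\scalars/I_W$, one per cobordism up to invertible scalar, where $I_W$ is generated by the elements $c-1$ for $c$ ranging over the scalars accumulated along loops of chronology changes; the proposition is then exactly the statement that this monodromy group is trivial when $W$ has genus $0$ and at most one closed component, and lies in $\{1,\permMM\permSS\}$ in general.

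However, as written your argument has two genuine gaps. First, that monodromy computation \emph{is} the proposition, and you defer it: the checks you do offer (involutivity of the disjoint-union relation, commutativity of $\scalars$ for triple exchanges, $\permMM^2=\permSS^2=1$ for orientation reversals) cover only a few of the loops that Cerf theory produces --- one must also handle cancellation of a birth or death against a saddle (the second group of relations), framing-arrow reversals interacting with exchanges, and saddles sliding around handles and past closed components --- so neither the triviality statement in the genus-$0$ case nor the containment in $\{1,\permMM\permSS\}$ is actually established. Second, the TQFT appeals cannot patch this. The functor $\FA$ (and its specialization over $\Zpi$) kills every sphere, since the birth sends $1\mapsto v_+$ and every death sends $v_+\mapsto 0$; hence the claim ``$\FA(W_{\mathrm{std}})\neq 0$ with trivial annihilator'' is already false for a single closed sphere, which is a genus-$0$, one-closed-component case your first step must cover, and for the divisibility direction the cobordisms at issue (e.g.\ two disjoint spheres, whose exchange produces the scalar $\permMM\permSS$) are sent to $0$, so no non-degeneracy property of a quotient TQFT can bound their annihilator by $(\permMM\permSS-1)$. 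If the monodromy computation were carried out, the TQFT would be superfluous; since it is not, the TQFT cannot substitute for it. (The final clause of the proposition is, by contrast, immediate once the rest is known: $\permMM\permSS-1$ maps to $0$ under $\permMM,\permSS,\permMS\mapsto 1$, hence is not a unit and cannot divide a monomial.)
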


\begin{remark}\label{rmk:other-coeffs-for-chcob}
	Given a~ring homomorphism $\scalars\to<1em> R$ we define $R\ChCob$ likewise. In particular, if we consider $\Z$ as a~trivial $\scalars$-module, i.e.\ $\permMM$, $\permSS$, and $\permMS$ act as the~identity, $\Z\ChCob$ is the~linear extension of ordinary cobordisms: the~relations \eqref{rel:reverse-orientation}--\eqref{rel:disjoint-union} become equalities.
\end{remark}

\subsection{The~generalized Khovanov complex}\label{sec:khov-def}
We shall now briefly describe the~construction of the~generalized Khovanov complex. We encourage the~reader to refer to Fig.~\ref{fig:trefoil-cube} frequently while reading this section; it illustrates the~construction for the~right-handed trefoil.

Fix a~link diagram $D$ and enumerate its crossings. Given a~sequence $\xi=(\xi_1,\dots,\xi_n)$, where $\xi_i\in\{0,1\}$ and $n$ is the~number of crossings in $D$, let $D_\xi$ be a~collection of circles obtained by resolving each crossing as illustrated below.
\begin{displaymath}
		\psset{unit=5mm}
		\begin{centerpict}(-1,-1)(1,1)
			\psbezier(-1,-1)(0,-0.1)(0,-0.1)(1,-1)
			\psbezier(-1, 1)(0, 0.1)(0, 0.1)(1, 1)
		\end{centerpict}
			\quad\from^{\xi_i=0}\quad
		\begin{centerpict}(-1,-1)(1,1)
			\psline(-1,-1)(1,1)
			\psline[border=5\pslinewidth](-1,1)(1,-1)
			\rput[b](0,1.3ex){$\scriptstyle i$}
		\end{centerpict}
			\quad\to^{\xi_i=1}\quad
		\begin{centerpict}(-1,-1)(1,1)
			\psbezier(-1,-1)(-0.1,0)(-0.1,0)(-1,1)
			\psbezier( 1,-1)( 0.1,0)( 0.1,0)( 1,1)
		\end{centerpict}
\end{displaymath}
We call them type 0 and type 1 resolutions of a~crossing. The~diagrams $D_\xi$ decorate vertices of an~$n$-dimensional cube $\KhCube{D}$, called the~\emph{cube of resolutions} of $D$. Let $|\xi|:=\xi_1+\ldots+\xi_n$ be the~\emph{weight} of the~vertex $\xi$. An~edge $\zeta\colon\xi\to\xi'$, oriented towards the~vertex with higher weight, is decorated with a~cobordism $D_\zeta\subset\R^2\times I$ that is a~vertical surface except a~small neighborhood of the~resolution being changed, where a~saddle \drawSaddle\ is inserted.\footnote{
	In Fig.~\ref{fig:trefoil-cube} we use the~surgery description of cobordisms: the~input circles together with an~arc, a~surgery along which results in the~output circles. The~arc is oriented, inducing an~orientation of the~saddle. A~3D picture of one cobordism is provided in the~left bottom corner of the~picture.
} Decorate each crossing of $D$ with a~small arrow that connects the~two arcs in type 0 resolution---these arrows determine uniquely orientations of saddle points of the~cobordisms $D_\zeta$, so that $\KhCube{D}$ can be regarded as a~diagram in the~category $\kChCob$.

\begin{figure}
	\begin{center}\input{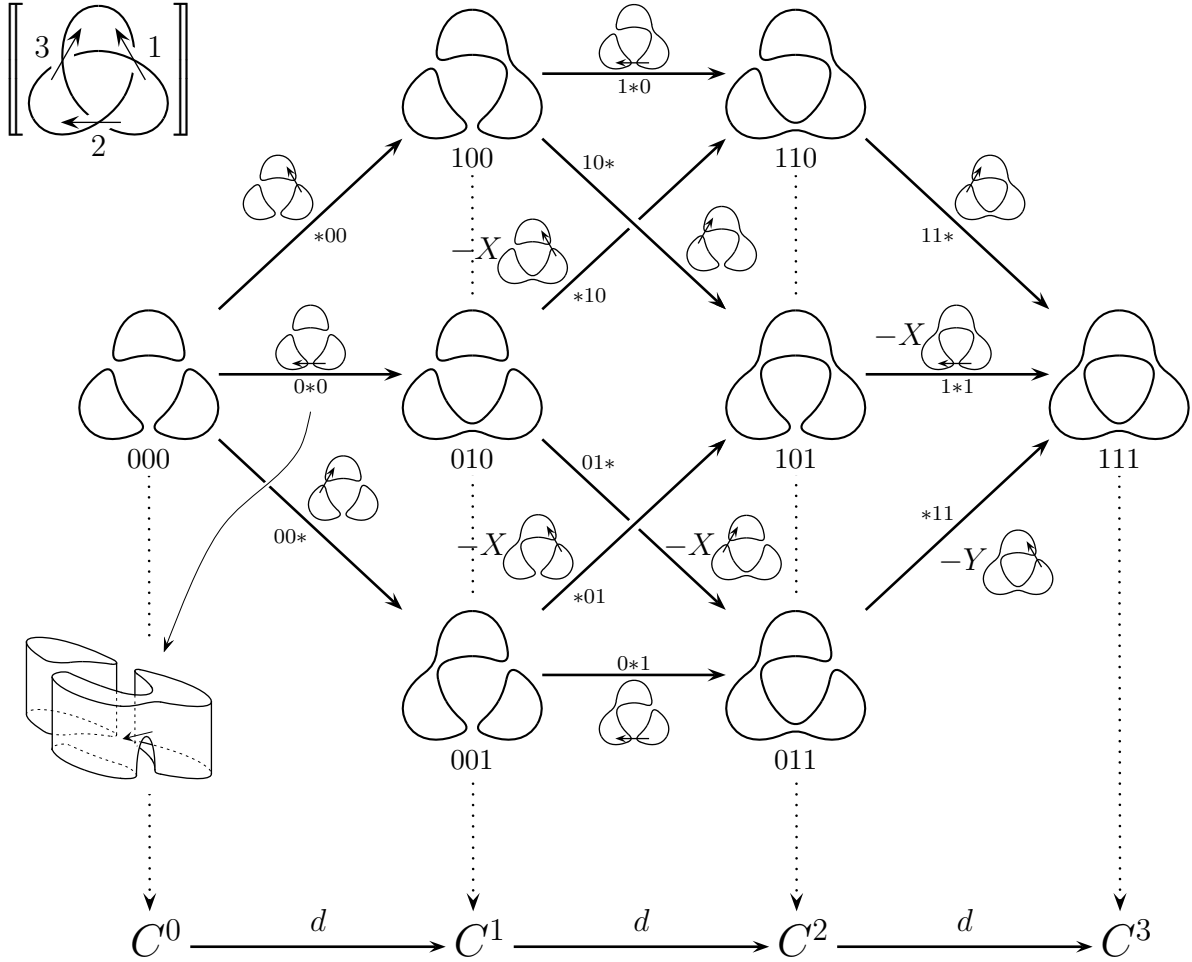}\end{center}
	\caption{The~cube of resolutions and the~generalized Khovanov complex for the~right-handed trefoil.}
	\label{fig:trefoil-cube}
\end{figure}

The~cube $\KhCube{D}$ does not commute in general, but there is a~cubical cocycle $\psi\in C^2(I^n;\invScalars)$ such that for every face $S$ of the~form
\begin{equation}
	\begin{diagps}(-2cm,-1.1cm)(2cm,1.4cm)
		\node 00(-1.5cm, 0cm)[D_{00}]
		\node 01( 0.0cm,-1cm)[D_{01}]
		\node 10( 0.0cm, 1cm)[D_{10}]
		\node 11( 1.5cm, 0cm)[D_{11}]
		\arrow|a|{->}[00`10;W_{{\star}0}]
		\arrow|a|{->}[10`11;W_{1{\star}}]
		\arrow|b|{->}[00`01;W_{0{\star}}]
		\arrow|b|{->}[01`11;W_{{\star}1}]
	\end{diagps}
\end{equation}
the~twisted commutativity $W_{1{\star}}W_{{\star}0} = \psi(S)W_{{\star}1}W_{0{\star}}$ holds. If $\delta\epsilon=-\psi$ for a~cubical cochain $\epsilon\in C^1(I^n;\invScalars)$, the~corrected cube $\KhCubeSigned{D}\epsilon$, in which each cobordism $D_\zeta$ is multiplied by $\epsilon(\zeta)$, anticommutes. We call such a~1-cochain a~\emph{sign assignment}.

The~generalized Khovanov complex is constructed in the~\emph{additive closure} $\catAdd{\kChCob}$ of the~category $\kChCob$: objects are finite sequence (vectors) of $1$-manifolds, and morphisms are matrices with linear combinations of chronological cobordisms as its entries. A~direct sum in $\catAdd{\kChCob}$ is realized by concatenation of sequences.

\begin{definition}
	Let $D$ be a~link diagram with enumerated and oriented crossings. Given a~sign assignment $\epsilon$ for the~cube $\KhCube{D}$ we define the~\emph{generalized Khovanov bracket} as the~chain complex $\KhBracket{D}_\epsilon$ in the~category $\catAdd{\kChCob}$ with
	\begin{equation}\label{eq:bracket-def}
		\KhBracket{D}_\epsilon^i := \bigoplus_{|\xi|=i} D_\xi,
			\hskip 1cm
		d^i|_{D_\xi} := \sum_{\mathclap{\zeta\colon\xi\to<0.7em>\xi'}} \epsilon(\zeta)D_\zeta.
	\end{equation}
	The~\emph{generalized Khovanov complex} $\KhCom(D)$ is obtained from $\KhBracket{D}_\epsilon$ by shifting it to the~left by the~number of negative crossings, i.e.\ $\KhCom(D)^i := \KhBracket{D}_\epsilon^{i+n_-}$.
\end{definition}

\begin{remark}
	The~generalized Khovanov bracket and complex admits an~integral grading induced from the~$\Z{\times}\Z$-grading of chronological cobordisms, see \cite{ChCob}. We skip the~details, as this degree does not play any role in this paper.
\end{remark}

\begin{theorem}[cf.\ \cite{ChCob}]\label{thm:invariance}
	The~homotopy type of the~generalized Khovanov complex $\KhCom(D)$ is a~link invariant, when regarded as a~complex in the~category $\catAdd{\kChCob}$ modulo the~following three relations
	\begin{align*}
		\textnormal{(\textit{S})}&\psset{unit=1cm}\quad\pictRelS = 0
		\hskip 2cm
		\textnormal{(\textit{T})}\quad\pictRelT = \permMS(\permMM+\permSS)\\[1ex]
		\textnormal{(\textit{4Tu})}&\quad\psset{unit=0.75cm}%
			\permMS\pictRelTuL + \permMS\pictRelTuR = \permMM\pictRelTuB + \permSS\pictRelTuT
	\end{align*}
	in which all deaths are oriented clockwise.
\end{theorem}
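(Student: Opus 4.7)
The plan is to verify invariance under each of the three Reidemeister moves separately, exhibiting an explicit chain homotopy equivalence (or in some cases an isomorphism) in $\catAdd{\kChCob}$ modulo the relations (\textit{S}), (\textit{T}), (\textit{4Tu}). Before starting the move-by-move analysis, I would first observe that the bracket $\KhBracket{D}_\epsilon$ is independent of the choice of sign assignment $\epsilon$ up to isomorphism: if $\epsilon' = \epsilon\cdot\delta\eta$ for some $\eta\in C^1(I^n;\invScalars)$, then rescaling each summand $D_\xi$ by the product $\prod_{\gamma\in P_\xi}\eta(\gamma)$ along a fixed monotone path $P_\xi$ from $\vec 0$ to $\xi$ gives an isomorphism $\KhBracket{D}_\epsilon \to \KhBracket{D}_{\epsilon'}$. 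This lets one freely choose any convenient sign assignment for each of the diagrams appearing in the moves below.

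For Reidemeister~I, I would take $D'$ to be $D$ with an added positive kink. The two resolutions of the new crossing produce, on one side, $D$ itself and, on the other, $D$ with an extra small circle, connected by a merge cobordism localized near the kink. The standard delooping trick---writing the identity on the extra circle as a sum of a split-then-cap and a cup-then-merge, each well defined only up to a chronological scalar---produces a strong deformation retract onto $\KhBracket{D}$ with the appropriate degree shift. The residual scalar obtained when composing the two capping cobordisms is precisely $\permMS(\permMM+\permSS)$, the right-hand side of (\textit{T}), which is what closes the deformation.

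For Reidemeister~II the bracket has four summands arranged in a square, of which two opposite ones are connected by a cobordism that, after isotopy, is an identity cylinder outside a small ball. Gaussian elimination on this component cancels the two summands, and the remaining differential becomes the identity after applying (\textit{S}) to erase an orphan sphere. For Reidemeister~III, I would follow Bar-Natan and reduce to R2-invariance applied to a pair of crossings in the middle of the braid: the two complexes produced by the two sides of R3 then coincide up to an identity of cobordisms that is precisely the (\textit{4Tu}) relation. The chronological coefficients $\permMM$, $\permSS$, $\permMS$ in (\textit{4Tu}) will arise exactly from the reversals of the small arrows decorating the three crossings, each contributing one of these scalars through \eqref{rel:reverse-orientation}.

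The hard part will be the bookkeeping of chronological scalars rather than the conceptual structure, which closely parallels invariance over $\Z$. Every disjoint union involved in a homotopy carries a factor $\lambda(\chdeg W,\chdeg W')$; every reorientation of a critical point produces $\permMM$ or $\permSS$; and the cancellations and delooping homotopies must be arranged so that their chronologies are mutually compatible. The coefficients appearing in (\textit{S}), (\textit{T}), and (\textit{4Tu}) are designed to absorb exactly these residual discrepancies, and Proposition~\ref{prop:kChCob-nondeg} guarantees that any identity we establish modulo the three relations is genuine rather than an artifact of torsion. Verifying that all scalars match on the nose for each move is the longest and most delicate portion of the argument.
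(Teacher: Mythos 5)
Your overall plan—move-by-move homotopy equivalences in $\catAdd{\kChCob}$ modulo (\textit{S}), (\textit{T}), (\textit{4Tu}), with the chronological scalars as the delicate bookkeeping—is the same family of argument as the paper's source \cite{ChCob}, whose explicit maps are recalled in Section~\ref{sec:invariance}. But the central mechanism you propose for R2 (and your framing of R1) is not available in this category. Delooping fails here: $\kChCob$ has no dotted cobordisms, and the torus evaluates to $\permMS(\permMM+\permSS)$, which is not invertible in $\scalars$ (nor is $2$), so a circle is \emph{not} isomorphic to a sum of shifted empty objects. Consequently no entry of the differential in the R2 cube is an isomorphism—every edge is a saddle—so Gaussian elimination has nothing to cancel; and you cannot cancel ``two opposite summands connected by a cobordism'' either, since elimination needs a single invertible matrix entry of $d$, not a composite of two differentials that happens to be isotopic to a cylinder. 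The actual proof writes down explicit $f$, $g$ and homotopies $h$ whose components are saddles together with births/deaths of the small circle (Fig.~\ref{fig:khov-inv-R2}), and verifies $\id-gf=dh+hd$ componentwise; at the circle-carrying resolution this verification is precisely where (\textit{4Tu}) is consumed, already for R1/R2—contrary to your plan, which defers (\textit{4Tu}) entirely to R3 and would leave those homotopy identities unprovable (isotopy, (\textit{S}) and (\textit{T}) alone do not suffice). Your R1 sketch is closer to correct: the ``split-then-cap minus birth'' map you describe is essentially the $f=\frac{\permSS}{\alpha}(\permMM\,\cdot-\permMS\,\cdot)$ of Section~\ref{sec:invariance}, but it must be checked directly as a homotopy equivalence; there is no delooping to retract along, and (\textit{T}) by itself does not ``close the deformation.''

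Two further points. R3 is not settled by asserting that the two sides ``coincide up to an identity of cobordisms that is precisely (\textit{4Tu}).'' Both sides are mapping cones over saddle morphisms, and the argument of \cite{ChCob} (Bar-Natan's categorified Kauffman trick, recalled at the end of Section~\ref{sec:invariance}) is that replacing one end of the coned map by its R2-simplification does not change the homotopy type of the cone; this needs the R2 equivalence to be a (strong deformation retract type) equivalence interacting correctly with the coned saddle, a formal step your sketch omits. Moreover the coefficients in (\textit{4Tu}) do not ``arise from reversing the arrows at the three crossings'': (\textit{4Tu}) is an imposed local relation, and your task is only to apply it consistently with the chronological scalars. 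Finally, a small slip in your preliminary step: two sign assignments have ratio a $1$-cocycle on the cube, hence equal to $\delta\nu$ for some $\nu\in C^0(I^n;\invScalars)$, and one rescales $D_\xi$ by $\nu(\xi)$; your $\eta\in C^1(I^n;\invScalars)$ with $\epsilon'=\epsilon\cdot\delta\eta$ is off by a degree, although the path-product you describe is exactly the correct $\nu$.
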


\noindent
The~proof of the~theorem can be found in \cite{ChCob}. It is revisited in Section~\ref{sec:invariance}, where we inspect the~chain maps involved in the~proof against the~new grading described in the~next section.

\subsection{Chronological TQFTs and homology}\label{sec:tqft}
Consider the~category $\Mod\scalars$ of $\scalars$-modules graded by the~group $\Z\times\Z$. We redefine the~tensor product for homomorphisms by setting for homogeneous maps $f$ and $g$
\begin{equation}\label{eq:graded-tensor-morphisms}
		(f\otimes g)(m\otimes n) := \lambda(\deg g, \deg m) f(m)\otimes g(n),
\end{equation}
where $\lambda(a,b,a',b')=\permMM^{aa'}\permSS^{bb'}\permMS^{ab'-a'b}$ is defined as for $\kChCob$. One checks directly that
\begin{equation}\label{eq:graded-tensor-functoriality}
	(f'\otimes g')\circ(f\otimes g) = \lambda(\deg g',\deg f)(f'\circ f)\otimes(g'\circ g).
\end{equation}
Hence, $\Mod\scalars$ is a~\emph{graded tensor category} in the~sense of \cite{ChCob}. There is a~symmetry $\tau_{M,N}\colon M\otimes N\to N\otimes M$ given by the~formula $\tau_{M,N}(m\otimes n) = \lambda(\deg m,\deg n)\,n\otimes m$ for homogeneous elements $m\in M$ and $n\in N$.

\begin{definition}
	A~\emph{chronological TQFT} is a~functor $\F\colon\kChCob \to \Mod\scalars$ that preserves the~$\Z\times\Z$ grading, and which maps the~`right-then-left' disjoint union $\rdsum$ into the~graded tensor product $\otimes$ and the~twist {\psset{unit=4mm}\textcobordism[2](P)} into the~symmetry $\tau$.
\end{definition}

\noindent
We defined in \cite{ChCob} a~chronological TQFT $\FA\colon\scalars\ChCob \to \Mod\scalars$ that maps a~circle to the~module $A$ freely degenerated by $v_+$ in degree $(1,0)$ and $v_-$ in degree $(0,-1)$, and generating cobordisms to the~following maps:
\begin{align}
	\label{eq:F-merge}
		\FA\left(\textcobordism[2](M-L)\right)&\colon A\otimes A\to A,\phantom{R}\quad
			\begin{cases}
					v_+\otimes v_+ \mapsto v_+, &\qquad v_+\otimes v_- \mapsto v_-,\\
					v_-\otimes v_- \mapsto 0,   &\qquad v_-\otimes v_+ \mapsto \permMM\permMS v_-,
				\end{cases} \\
	\label{eq:F-split}
		\FA\left(\textcobordism[1](S-B)\right)&\colon A\to A\otimes A,\phantom{R}\quad
			\begin{cases}
				v_+\mapsto v_-\otimes v_+ + \permSS\permMS v_+\otimes v_-,\\
				v_-\mapsto v_-\otimes v_-,
			\end{cases} \\
	\label{eq:F-birth}
		\FA\left(\textcobordism[0](sB)\right)&\colon\scalars\to A,\phantom{A\otimes A}\quad
			\begin{cases}
				1\mapsto v_+,
			\end{cases}\\
	\label{eq:F-death}
		\FA\left(\textcobordism[1](sD-)\right)&\colon A\to\scalars,\phantom{A\otimes A}\quad
			\begin{cases}
				v_+\mapsto 0,\\
				v_-\mapsto 1.
			\end{cases}
\end{align}
It is easy to see that $\FA$ preserves the~$\Z\times\Z$-grading. Likewise, given a~ring homomorphism $\scalars\to<1em> R$, we define a~TQFT $\F_{\!R}\colon R\ChCob \to \Mod{R}$ such that $\F_{\!R}(\fntCircle) = A\otimes R$.

\begin{definition}
	The~\emph{generalized Khovanov homology} $\Kh(L)$ of a~link $L$ is the~homology of the~chain complex $\FA\KhCom(D)$, where $D$ is a~diagram of $L$. Given a~$\scalars$-module $M$ we write $\Kh(L;M)$ for the~homology of the~complex $\FA\KhCom(D)\otimes M$.
\end{definition}


\begin{example}
	We distinguish two out of eight $\scalars$-algebra structures on the~ring $\Z$:
	\begin{itemize}
		\item $\Zev$, on which all $\permMM$, $\permSS$, and $\permMS$ acts trivially, and
		\item $\Zodd$, on which $\permMM$ and $\permMS$ acts trivially, but $\permSS$ acts as $-1$.
	\end{itemize}
	It was shown in \cite{ChCob} that $\Kh(L;\Zev)$ and $\Kh(L;\Zodd)$ are the~even and odd Khovanov homology respectively. Both are specializations of $\Kh(L;\Zpi)$, where $\Zpi=\ZpiLong$ is a~$\scalars$-module, on which both $\permMM$ and $\permMS$ act as identity, and $\permSS$ as multiplication by $\pi$.
\end{example}

\section{The~new grading}\label{sec:s-degree}

\subsection{The~grading on chronological cobordisms}\label{sec:gradings}
The~category $\kChCob$ of chronological cobordisms admits an~additional grading by the~group $\Z_2\times\Z$, which we shall refer to as the~\emph{splitting degree} for a~reason explained later. It takes the~following values on the~generating cobordisms:
\begin{align}
	\label{eq:split-degree-merge}
	\sdegCob[2](M-L)	&= \sdegNum{0}{0},					&		\sdegCob[0](sB)		&= \sdegNum{0}{0},	\\
	\label{eq:split-degree-split}
	\sdegCob[1](S-B)	&= \sdegNum{\phantom-0}{-2},&		\sdegCob[1](sD+)	&= \sdegNum{0}{1},	\\
	\label{eq:split-degree-other}
	\sdegCob[2](P)		&= \sdegNum{1}{0},					&		\sdegCob[1](sD-)	&= \sdegNum{1}{1},
\end{align}
and the~coefficient ring $\scalars$ is graded itself with $\sdeg\permMM = \sdeg\permSS=\sdegNum{1}{0}$ and $\sdeg\permMS=\sdegNum{\phantom-0}{-1}$. We use the~vertical notation for elements of $\Z_2\times\Z$ to distinguish it from the~chronological degree. This degree is not additive with respect to the~disjoint union; instead we set
\begin{equation}\label{eq:split-degree-disjoint-union}
	\def\COBxsize{0.3}
	\sdeg\bigg(
		\underbrace{\textcobordism[1](I){\scriptstyle\cdots}\textcobordism[1](I)}_k
		\begin{centerpict}(1.2,-0.1)(2.7,1.3)
			\COBcylinder(1.4,0)(1.4,1.2)\COBcylinder(2.2,0)(2.2,1.2)
			\rput[c](1.95,0.1){$\scriptstyle\cdots$}
			\rput[c](1.95,1.05){$\scriptstyle\cdots$}
			\psframe[framearc=0.5,fillstyle=solid](1.25,0.3)(2.65,0.9)
			\rput[c](1.95,0.6){$\scriptstyle W$}
		\end{centerpict}
		\underbrace{\textcobordism[1](I){\scriptstyle\cdots}\textcobordism[1](I)}_\ell
	\bigg)%
	:= \sdeg W + \sdegNum{k\alpha+\ell\beta}{(k+\ell)\beta},
\end{equation}
assuming $\chdeg W=(\alpha,\beta)$. The~above formula is clearly additive with respect to composition of cobordisms, and it is coherent with the~symmetry:

\begin{equation}\label{eq:split-degree-twist}
	\def\COBxsize{0.3}
	\begin{split}
		\raisebox{0pt}[17mm][7mm]{$\displaystyle{%
		\sdeg\left(\begin{centerpict}(-0.1,0)(1.8,2)
			\cobordism{\COBcylinder(0,0)(0,1)\COBcylinder(0.8,0)(0.8,0.9)\COBcylinder(1.4,0)(1.4,1)}{}%
				{\COBcylinder(0,1)(0.6,2)\COBcylinder(0.8,0.9)(1.4,2)\COBcylinder(1.4,1)(0,2)}
			\rput[c](0.55,0.1){$\scriptstyle\cdots$}
			\rput[c](1.15,1.85){$\scriptstyle\cdots$}
			\psframe[fillstyle=solid,framearc=0.5](-0.2,0.3)(1.3,1)
			\rput[c](0.55,0.65){$\scriptstyle W$}
			\rput[t](0.55,-0.1){$\underbrace{\hskip 1.2\psxunit}_a$}%
			\rput[b](1.15,2.1){$\overbrace{\hskip 1.2\psxunit}^b$}%
		\end{centerpict}\right)}$}
		= \sdeg W + \sdegNum{\beta+b}{\beta} &\\
		= \sdeg W + \sdegNum{\alpha+a}{\beta} &=
		\raisebox{0pt}[9mm][14mm]{$\displaystyle{%
		\sdeg\left(\begin{centerpict}(-0.1,0)(1.8,2)
			\cobordism{\COBcylinder(0,0)(0.6,1.1)\COBcylinder(0.8,0)(1.4,1)\COBcylinder(1.4,0)(0,1)}{}
				{\COBcylinder(0.6,1.1)(0.6,2)\COBcylinder(1.4,1)(1.4,2)\COBcylinder(0,1)(0,2)}
			\rput[c](0.55,0.1){$\scriptstyle\cdots$}
			\rput[c](1.15,1.85){$\scriptstyle\cdots$}
			\psframe[fillstyle=solid,framearc=0.5](0.4,1)(1.9,1.7)
			\rput[c](1.15,1.35){$\scriptstyle W$}
			\rput[t](0.55,-0.1){$\underbrace{\hskip 1.2\psxunit}_a$}%
			\rput[b](1.15,2.1){$\overbrace{\hskip 1.2\psxunit}^b$}%
		\end{centerpict}\right)}$}
	\end{split}
\end{equation}
where $\chdeg W = (\alpha,\beta)$, and the~equality $\alpha+a=\beta+b$ follows from Lemma~\ref{lem:chdeg-vs-bdry}.

\begin{proposition}\label{prop:sdeg-vs-chronology}
	The~splitting degree is coherent with chronological relations.
\end{proposition}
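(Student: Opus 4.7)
The plan is to verify, case-by-case, that each of the chronological relations \eqref{rel:reverse-orientation}--\eqref{rel:disjoint-union} is homogeneous under $\sdeg$: for every relation written as $W_1 = k W_2$ with $k\in\scalars$ a monomial, I need to confirm $\sdeg W_1 = \sdeg k + \sdeg W_2$. Since $\sdeg$ is additive under composition and extends to any stacking of a generator with identity cylinders via formula \eqref{eq:split-degree-disjoint-union}, every such verification reduces to a local computation on the generators appearing in the relation.

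I would first dispose of the reverse-orientation relations \eqref{rel:reverse-orientation}. Here $\sdeg$ of the reversed merge, reversed split, and negative death is \emph{forced} by the relations themselves; for the negative death, the value $\sdegNum{1}{1}$ already declared in \eqref{eq:split-degree-other} is immediately consistent with $\sdeg\permSS + \sdeg(\textnormal{positive death}) = \sdegNum{1}{0} + \sdegNum{0}{1}$. Next I would handle the cylinder-collapse identities, the Morse-theoretic (co)associativity relations, and the Frobenius relation by decomposing each side into a sequence of generators flanked by identity cylinders and applying \eqref{eq:split-degree-disjoint-union} at every stage. These reduce to routine arithmetic, with the $\Z_2$ factor in the first coordinate absorbing even contributions such as $2\alpha\beta'$.

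The hard part is the disjoint-union relation \eqref{rel:disjoint-union}. Write $\chdeg W = (\alpha,\beta)$, $\chdeg W' = (\alpha',\beta')$, and let $n, m$ (resp.\ $n', m'$) be the numbers of inputs and outputs of $W$ (resp.\ $W'$). Decomposing the left-hand side as ``$W'$ first, with $n$ identities sitting on its right, then $W$ above with $m'$ identities on its left'' and symmetrically the right-hand side, and applying \eqref{eq:split-degree-disjoint-union} at each stage, one obtains
\begin{equation*}
	\sdeg(\textnormal{LHS}) - \sdeg(\textnormal{RHS})
	= \sdegNum{(m'-n')\alpha + (n-m)\beta'}{(m'-n')\beta + (n-m)\beta'}.
\end{equation*}
Substituting $n - m = \beta - \alpha$ and $m' - n' = \alpha' - \beta'$ from Lemma~\ref{lem:chdeg-vs-bdry}, and reducing the first component modulo $2$, this simplifies to
\begin{equation*}
	\sdegNum{\alpha\alpha' + \beta\beta'}{\alpha'\beta - \alpha\beta'}
	= \alpha\alpha'\sdeg\permMM + \beta\beta'\sdeg\permSS + (\alpha\beta' - \alpha'\beta)\sdeg\permMS
	= \sdeg\lambda(\chdeg W, \chdeg W'),
\end{equation*}
exactly matching the shift prescribed by the relation. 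Lemma~\ref{lem:chdeg-vs-bdry} is the essential ingredient here; without it the boundary-correction terms produced by \eqref{eq:split-degree-disjoint-union} cannot be reconciled with the multiplicative cocycle $\lambda$.
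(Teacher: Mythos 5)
Your proof is correct and follows essentially the same route as the paper: the authors likewise dispose of the local relations by direct computation and handle the disjoint-union relation by decomposing each side as a composition of a cobordism with identity cylinders, applying \eqref{eq:split-degree-disjoint-union}, and invoking Lemma~\ref{lem:chdeg-vs-bdry} to identify the resulting difference with $\sdeg\lambda(\chdeg W,\chdeg W')$. The only cosmetic differences are notational (the paper writes the two sides as $W_1\rdsum W_2$ and $W_1\ldsum W_2$) and that the paper writes out the associativity/Frobenius checks you label as routine.
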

\begin{proof}
	\psset{unit=6mm}
	Creation and annihilation do not change the~degree, as we can directly compute
	\begin{equation}
		\sdeg\left(\textcobordism*[1](slB)(M-L)\right) =
		\sdeg\left(\textcobordism*[1](S-B)(slD-)\right) =
		\sdeg\left(\textcobordism*[1](S-B)(srD+,2)\right) =
		\sdegNum 00.
	\end{equation}
	Choose cobordisms $W_i\colon a_i\bS\to b_i\bS$ for $i=1,2$. If $\chdeg W_i = (m_i,s_i)$, we have
	\begin{align}
		\sdeg(W_1\rdsum W_2) &= \sdeg(W_1\sqcup C_{b_2\bS}) + \sdeg(C_{a_1\bS}\sqcup W_2),\\
		\sdeg(W_1\ldsum W_2) &= \sdeg(W_1\sqcup C_{a_2\bS}) + \sdeg(C_{b_1\bS}\sqcup W_2),
	\end{align}
	where $C_{n\bS}$ is a~disjoint union of $n$ vertical tubes. Using the~formula \eqref{eq:split-degree-disjoint-union} we compute
	\begin{multline}
		\sdeg(W_1\rdsum W_2) - \sdeg(W_1\ldsum W_2)
				=\sdegNum{(b_2-a_2)s_1}{(b_2-a_2)s_1)} + \sdegNum{(a_1-b_1)m_2}{(a_1-b_1)s_2)}\\[1ex]
				=\sdegNum{s_1s_2 + m_1m_2}{s_1m_2 - m_1s_2}
				=\sdeg\left(\permMM^{m_1m_2}\permSS^{s_1s_2}\permMS^{m_1s_2-s_1m_2}\right),
	\end{multline}
	which shows that $W_1\rdsum W_2$ and $\lambda(\chdeg W_1,\chdeg W_2)W_1\ldsum W_2$ have the~same degree. The~remaining chronological relations are easily checked by hand:
	
	\begin{gather}
		\sdeg\left(\textcobordism*[3](M-L,2)(M-L)\right)
				= \sdegNum 1 0
				= \sdeg\left(\permMM\textcobordism*[3](M-L)(M-L)\right),
		\\
		\sdeg\left(\textcobordism*[1](S-B)(S-B,2)\right)
				= \sdegNum{\phantom-0}{-5}
				=\sdeg\left(\permSS\textcobordism*[1](S-B)(S-B)\right),
		\\
		\sdeg\left(\textcobordism*[2](S-B)(M-L,2)\right)
			= \sdeg\left(\permMS\textcobordism*[2](M-L)(S-B)\right)
			= \sdeg\left(\textcobordism*[2](S-B,2)(M-L)\right)
			= \sdegNum{\phantom-0}{-3}.
	\end{gather}
	\vskip-\baselineskip
\end{proof}

\begin{observation}\label{obs:homog-decomp}
	The~splitting degree decomposes the~module of morphisms into
	\begin{equation}\label{eq:mor-graded}
		\Mor(\Sigma,\Sigma') := \bigoplus_{\mathclap{(k,\ell)\in\Z_2\times\Z}} \Mor_{(k,\ell)}(\Sigma,\Sigma'),
	\end{equation}
	where each summand is isomorphic to the~submodule of degree-preserving maps. Indeed, $\sdeg(\permMM^a\permMS^b f)=\sdegNum00$ if $\sdeg f = \sdegNum ab$.
\end{observation}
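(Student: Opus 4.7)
The plan is to deduce the grading on $\Mor(\Sigma,\Sigma')$ from a grading on the free module of formal $\scalars$-linear combinations of cobordisms, using Proposition~\ref{prop:sdeg-vs-chronology} to pass to the quotient, and then to exhibit each homogeneous summand as a twist of $\Mor_{(0,0)}$ by multiplication with a suitable monomial in $\scalars$.

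First, the free $\scalars$-module generated by chronological cobordisms $\Sigma\to\Sigma'$ has an evident $\Z_2\times\Z$-grading, since both the ring $\scalars$ and each cobordism carry a splitting degree by \eqref{eq:split-degree-merge}--\eqref{eq:split-degree-other} together with \eqref{eq:split-degree-disjoint-union}, and composition is additive in this degree. Proposition~\ref{prop:sdeg-vs-chronology} asserts that every chronological relation from \eqref{rel:reverse-orientation}--\eqref{rel:disjoint-union} equates elements of the same splitting degree, so the submodule of relations is homogeneous. The grading therefore descends to the quotient, yielding the decomposition~\eqref{eq:mor-graded}.

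To identify each summand $\Mor_{(a,b)}(\Sigma,\Sigma')$ with the degree-zero piece, one uses multiplication by the monomial $\permMM^a\permMS^b$. The computation
\begin{equation*}
	\sdeg(\permMM^a\permMS^b f)=\sdegNum{a}{0}+\sdegNum{\phantom-0}{-b}+\sdegNum{a}{b}=\sdegNum{2a}{0}=\sdegNum{0}{0}
\end{equation*}
in $\Z_2\times\Z$ shows this map lands in $\Mor_{(0,0)}(\Sigma,\Sigma')$, and multiplication by $\permMM^a\permMS^{-b}$ provides a two-sided inverse, using that $\permMS$ is a unit in $\scalars$ and that $\permMM^{2a}=1$. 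The only substantive step in this argument is the verification of Proposition~\ref{prop:sdeg-vs-chronology}; everything else is formal, hinging on the fact that the relevant scalars are units.
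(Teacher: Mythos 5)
Your proposal is correct and follows the same route the paper intends: the grading on $\Mor(\Sigma,\Sigma')$ descends from the graded free module because the chronological relations are homogeneous (Proposition~\ref{prop:sdeg-vs-chronology}), and each summand $\Mor_{(a,b)}$ is identified with the degree-preserving part by multiplication with the unit $\permMM^a\permMS^b$, which is exactly the one-line computation $\sdeg(\permMM^a\permMS^b f)=\sdegNum00$ the paper offers as justification. You merely spell out the implicit steps (homogeneity of the relation submodule and invertibility of the monomial), which is fine.
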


We make $\scalars\ChCob$ a~graded category by replacing its objects with symbols $\Sigma\{a,b\}$, where $\Sigma$ is an~object of $\scalars\ChCob$ and $(a,b)\in\Z_2\times\Z$. Thinking of $\{a,b\}$ as a~\emph{degree shift} operation, the~module of morphisms is given as a~direct sum
\begin{equation}\label{eq:mor-graded-shifted}
	\Mor(\Sigma\{a,b\},\Sigma'\{a',b'\}) := \bigoplus_{\mathclap{(k,\ell)\in\Z_2\times\Z}}
		\Mor_{(k+a-a',\ell+b-b')}(\Sigma,\Sigma'),
\end{equation}
i.e.\ a~homogeneous morphism $f\in\Mor_{k,\ell}(\Sigma,\Sigma')$, when regarded as $f\colon\Sigma\{a,b\}\to\Sigma'\{a',b'\}$, has degree $\sdeg f = \sdegNum{k-a+a'}{\ell-b+b'}$. We write $\kChCob_0$ for the~subcategory of degree-preserving morphisms.

\subsection{The~grading on modules}\label{sec:decomp}
Recall the~ring $\scalars$ is graded by $\Z_2\times\Z$ with $\sdeg\permMM = \sdeg\permSS = \sdegNum10$ and $\sdeg\permMS=\sdegNum{\phantom-0}{-1}$. Hereafter let $\Mod\scalars$ stand for the~category of modules with a~compatible $\Z_2\times\Z$-grading, in addition to the~$\Z\times\Z$-grading, and we write $\Mod{\scalars,0}$ for the~subcategory formed by maps that preserve the~new degree. Again, the~new grading is not additive with respect to the~tensor product, but instead we set
\begin{equation}\label{eq:split-deg-Frob-tensor}
	\sdeg(m\otimes n) := \sdeg(m) + \sdeg(n) + \sdegNum{\beta\wdeg{n}}{\beta\wdeg{n}}
\end{equation}
for homogeneous $m\in M$ and $n\in N$, where $\deg m = (\alpha,\beta)$ and $\wdeg{n}$ is the~\emph{weight} of $n$: the~difference of the~two components of $\deg$ (e.g.\ $\wdeg m = \alpha-\beta$). The~name is motivated by the~behavior of the~symmetry isomorphism: it is homogeneous only when restricted to submodules supported in a~single weight.

\begin{lemma}\label{lem:sdeg-can-isoms}
	The~associator $(M_1\otimes M_2)\otimes M_3 \to<2.5em> M_1\otimes(M_2\otimes M_3)$ preserves the~splitting degree. Moreover, if $M_1$ and $M_2$ are supported in weights $w_1$ and $w_2$ respectively, the~symmetry $\tau\colon M_1\otimes M_2\to M_2\otimes M_1$ is homogeneous of degree $\sdeg\tau = \sdegNum{w_1w_2}{0}$.
\end{lemma}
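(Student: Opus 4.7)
The plan is a direct verification from the definition \eqref{eq:split-deg-Frob-tensor}, exploiting that the $\Z\times\Z$-degree (and hence the weight $\wdeg{\blank}$) is additive on tensor products while the splitting degree is not.

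\medskip

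\emph{Step 1 (associator).} Write $\deg m_i = (\alpha_i,\beta_i)$ for homogeneous $m_i \in M_i$ and note $\wdeg{m_2 \otimes m_3} = \wdeg{m_2} + \wdeg{m_3}$, since $\deg$ is additive under $\otimes$. Applying \eqref{eq:split-deg-Frob-tensor} twice to $(m_1 \otimes m_2)\otimes m_3$ gives a correction
\[
  \sdegNum{\beta_1\wdeg{m_2} + (\beta_1+\beta_2)\wdeg{m_3}}{\beta_1\wdeg{m_2} + (\beta_1+\beta_2)\wdeg{m_3}},
\]
and applying it twice to $m_1 \otimes (m_2\otimes m_3)$, using additivity of $\wdeg{\blank}$ to expand $\wdeg{m_2\otimes m_3}$, yields exactly the same expression. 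Hence the associator lies in $\Mor_{(0,0)}$.

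\medskip

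\emph{Step 2 (symmetry).} Since $\tau(m_1\otimes m_2) = \lambda(\deg m_1,\deg m_2)\, m_2\otimes m_1$ with $\lambda = \permMM^{\alpha_1\alpha_2}\permSS^{\beta_1\beta_2}\permMS^{\alpha_1\beta_2-\alpha_2\beta_1}$, the grading of $\scalars$ gives $\sdeg\lambda = \sdegNum{\alpha_1\alpha_2+\beta_1\beta_2}{\alpha_2\beta_1-\alpha_1\beta_2}$. Combining this with the tensor-product rule applied to $m_2\otimes m_1$ and subtracting $\sdeg(m_1\otimes m_2)$, I obtain
\[
  \sdeg\tau \;=\; \sdegNum{\alpha_1\alpha_2+\beta_1\beta_2+\beta_2 w_1-\beta_1 w_2}{\alpha_2\beta_1-\alpha_1\beta_2+\beta_2 w_1-\beta_1 w_2}.
\]
Substituting $w_i = \alpha_i - \beta_i$, the second component collapses to $0$ identically in $\Z$. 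The first component simplifies to $\alpha_1\alpha_2+\beta_1\beta_2+\alpha_1\beta_2-\alpha_2\beta_1$, which differs from $w_1 w_2 = \alpha_1\alpha_2 - \alpha_1\beta_2 - \alpha_2\beta_1 + \beta_1\beta_2$ by $2\alpha_1\beta_2$, hence equals $w_1 w_2$ in $\Z_2$. This gives $\sdeg\tau = \sdegNum{w_1w_2}{0}$, as claimed; note the assumption that each $M_i$ is supported in a single weight $w_i$ is exactly what makes this shift independent of the chosen representatives $m_1,m_2$.

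\medskip

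\emph{Expected obstacle.} The only subtle point is the mod-$2$ reduction in the first component: the raw integer computation does \emph{not} produce $w_1w_2$ on the nose but rather $w_1w_2 + 2\alpha_1\beta_2$, and it is essential that the first coordinate of the splitting degree lives in $\Z_2$ rather than $\Z$ for the statement to hold. Aside from this, the lemma is a bookkeeping exercise, and the whole argument rests on (i) additivity of $\wdeg{\blank}$ under tensor product, and (ii) the compatibility between $\sdeg$ on $\scalars$ and the twisting factor $\lambda$, which is immediate from the definitions $\sdeg\permMM=\sdeg\permSS=\sdegNum{1}{0}$ and $\sdeg\permMS=\sdegNum{0}{-1}$.
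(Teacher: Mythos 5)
Your proof is correct and takes essentially the same route as the paper's: a direct computation with formula \eqref{eq:split-deg-Frob-tensor}, additivity of the $\Z\times\Z$-degree (hence of the weight) under tensor products, and the degrees of $\permMM$, $\permSS$, $\permMS$ applied to the twisting factor $\lambda$. The only difference is cosmetic: the paper silently absorbs the discrepancy $2\alpha_1\beta_2$ into the $\Z_2$ component, whereas you make that mod-$2$ reduction explicit.
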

\begin{proof}
	Choose elements $m_i\in M_i$, $i=1,2,3$, with $\Z\times\Z$ degrees $\deg m_i=(\alpha_i,\beta_i)$. Using formula \eqref{eq:split-deg-Frob-tensor} we compute
	\begin{align*}
			\sdeg((&m_1\otimes m_2)\otimes m_3)
				= \sdeg(m_1\otimes m_2) + \sdeg(m_3) +
					\sdegNumTwice{(\beta_1+\beta_2)\wdeg{m_3}} \\
				&= \sdeg(m_1) + \sdeg(m_2) + \sdeg(m_3) +
					\sdegNumTwice{\beta_1 \wdeg{m_2} + \beta_1 \wdeg{m_3} + \beta_2 \wdeg{m_3}} \\
				&= \sdeg(m_1) + \sdeg(m_2\otimes m_3) +
					\sdegNumTwice{\beta_1\wdeg{m_2\otimes m_3}}\\
				&= \sdeg(m_1\otimes (m_2\otimes m_3)).
	\end{align*}
	For the~second statement, first compute $\tau(m_1\otimes m_2) = \permMM^{\alpha_1\alpha_2}\permSS^{\beta_1\beta_2}\permMS^{\alpha_1\beta_2-\beta_1\alpha_2}m_2\otimes m_1$. Then
	\begin{displaymath}
		\begin{split}
			\sdeg(\tau(&m_1\otimes m_2)) - \sdeg(m_1\otimes m_2) = \\
				&= \sdegNum{\alpha_1\alpha_2+\beta_1\beta_2}{\beta_1\alpha_2-\alpha_1\beta_2}
					+ \sdeg(m_2\otimes m_1)-\sdeg(m_1\otimes m_2)\\
				&= \sdegNum{\alpha_1\alpha_2+\beta_1\beta_2}{\beta_1\alpha_2-\alpha_1\beta_2}
					+ \sdegNumTwice{\beta_2 w_1} - \sdegNumTwice{\beta_1 w_2}\\
				&= \sdegNum{(\alpha_1-\beta_1)(\alpha_2-\beta_2)}{0}
				 = \sdegNum{w_1w_2\mathrlap{\phantom{\beta}}}{0}.
		\end{split}
	\end{displaymath}
	\vskip-\baselineskip
\end{proof}	

\begin{lemma}
	Choose a~homogeneous map $f\colon M\to N$ of degree $\deg f = (\alpha,\beta)$, and two modules $M'$ and $M''$ supported in weights $k$ and $\ell$ respectively. Then
	\begin{equation}\label{eq:}
		\sdeg(\id_{M'}\otimes f\otimes\id_{M''}) = \sdeg f+\sdegNum{k\alpha+\ell\beta}{(k+\ell)\beta}.
	\end{equation}
	In particular, the~graded tensor product relation \eqref{eq:graded-tensor-functoriality} is graded.
\end{lemma}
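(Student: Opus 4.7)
The plan is direct computation: I evaluate $(\id_{M'}\otimes f\otimes\id_{M''})(m'\otimes m\otimes m'')$ on a generic homogeneous elementary tensor and compare splitting degrees of input and output. Fix $\deg m'=(a_1,b_1)$, $\deg m=(a_2,b_2)$, $\deg m''=(a_3,b_3)$; the weight hypotheses on $M'$ and $M''$ enter as $a_1-b_1=k$ and $a_3-b_3=\ell$.

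By \eqref{eq:graded-tensor-morphisms} (applied twice, once to push $\id_{M''}$ past and once to push $\id_{M'}$ past, noting $\lambda(0,{-})=1$),
\begin{equation*}
	(\id_{M'}\otimes f\otimes\id_{M''})(m'\otimes m\otimes m'') = \lambda(\deg f,\deg m')\cdot m'\otimes f(m)\otimes m'',
\end{equation*}
with coefficient $\lambda(\alpha,\beta,a_1,b_1)=\permMM^{\alpha a_1}\permSS^{\beta b_1}\permMS^{\alpha b_1-a_1\beta}$ of splitting degree $\sdegNum{\alpha a_1+\beta b_1}{a_1\beta-\alpha b_1}$, read off from $\sdeg\permMM=\sdeg\permSS=\sdegNum10$ and $\sdeg\permMS=\sdegNum{\phantom-0}{-1}$. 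Next I would apply \eqref{eq:split-deg-Frob-tensor} twice to $m'\otimes m\otimes m''$ (associated as $(M'\otimes M)\otimes M''$, harmlessly by Lemma~\ref{lem:sdeg-can-isoms}) and to $m'\otimes f(m)\otimes m''$. After using $\sdeg f(m)-\sdeg m=\sdeg f$, the telescoping difference collapses to $\sdeg f+\sdegNumTwice{b_1(\alpha-\beta)+\beta\ell}$. Summing this with $\sdeg\lambda(\deg f,\deg m')$ yields
\begin{equation*}
	\sdeg(\id_{M'}\otimes f\otimes\id_{M''}) = \sdeg f+\sdegNum{\alpha(a_1+b_1)+\beta\ell}{(a_1-b_1)\beta+\beta\ell}.
\end{equation*}

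Finally I would use the weight hypothesis in two guises. In the $\Z$-coordinate, $a_1-b_1=k$ directly gives $(k+\ell)\beta$. In the $\Z_2$-coordinate, the same hypothesis forces $a_1+b_1\equiv k\pmod 2$, so $\alpha(a_1+b_1)+\beta\ell\equiv k\alpha+\ell\beta$. The formula follows, and the fact that the result depends only on the weights $k,\ell$ (not on any of the $a_i,b_i$ individually, and not on the choice of $m$) confirms that $\id_{M'}\otimes f\otimes\id_{M''}$ is a genuinely homogeneous map.

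For the in particular clause, I would decompose $f\otimes g$ as a composition $(\id_N\otimes g)\circ(f\otimes\id_{M_2})$ up to a $\lambda$-scalar, computed by the same use of \eqref{eq:graded-tensor-morphisms} as above; the main formula then gives $\sdeg(f\otimes g)$ in terms of $\sdeg f$, $\sdeg g$ and the weights of the factor modules, and similarly for $f'\otimes g'$ and $(f'\circ f)\otimes(g'\circ g)$. Additivity of $\sdeg$ under composition and the computed value of $\sdeg\lambda(\deg g',\deg f)$ then make both sides of \eqref{eq:graded-tensor-functoriality} match degree-wise. The main obstacle throughout is the bookkeeping: keeping $\Z_2$-parity arithmetic and honest $\Z$-arithmetic separate in the splitting degrees of the scalar coefficients, and ensuring the weight hypothesis is invoked exactly where parity-reduction is needed; no new conceptual ingredient beyond Lemma~\ref{lem:sdeg-can-isoms} and the defining formulas \eqref{eq:graded-tensor-morphisms} and \eqref{eq:split-deg-Frob-tensor} is required.
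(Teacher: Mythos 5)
Your proof is correct and is essentially the paper's own argument: evaluate on a homogeneous elementary tensor, read off the scalar from \eqref{eq:graded-tensor-morphisms}, compute the correction terms from \eqref{eq:split-deg-Frob-tensor}, and reduce using $a_1-b_1=k$, $a_1+b_1\equiv k\pmod 2$ and $a_3-b_3=\ell$, with the \emph{in particular} clause handled by the same kind of direct computation. Your coefficient $\lambda(\deg f,\deg m')$ is the literal reading of \eqref{eq:graded-tensor-morphisms}, whereas the paper's displayed coefficient carries the exponent of $\permMS$ with the opposite sign (a harmless slip, off only by the even integer $2(\alpha b_1-\beta a_1)$ in the $\Z$-component), so your bookkeeping is, if anything, the cleaner of the two.
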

\begin{proof}
	Pick homogeneous elements $m_1\in M'$, $m_2\in M$ and $m_3\in M''$, each of $\Z\times\Z$ degree $\deg(m_i)=(\alpha_i,\beta_i)$. Then
	\begin{displaymath}
		\begin{split}
			\sdeg(&(\id\otimes f\otimes\id)(m_1\otimes m_2\otimes m_3)) - \sdeg(m_1\otimes m_2\otimes m_3)\\
			  &= \sdeg\left(\permMM^{\alpha_1\alpha}\permSS^{\beta_1\beta}\permMS^{\alpha_1\beta-\beta_1\alpha}
						m_1\otimes f(m_2)\otimes m_3\right) - \sdeg(m_1\otimes m_2\otimes m_3)\\
				&= \sdeg f + \sdegNum{\beta_1\beta + \alpha_1\alpha}{\beta_1\alpha-\beta\alpha_1}
					+ \sdegNumTwice{\beta_1(\alpha-\beta) + \beta(\alpha_3-\beta_3)}
				= \sdeg f + \sdegNum{k\alpha + \ell\beta}{(k+\ell)\beta}.
		\end{split}
	\end{displaymath}
	The~last statement follows from a~direct computation, as in Proposition~\ref{prop:sdeg-vs-chronology}.
\end{proof}

The~generators of the~module $A$ have weights $\wdeg{v_+} = \wdeg{v_-} = 1$, implying that
\begin{equation}\label{eq:sdeg-tensor-with-A}
	\sdeg\left(\id_{A^{\otimes k}}\otimes f\otimes\id_{A^{\otimes\ell}}\right) = \sdeg f + \sdegNum{k\alpha + \ell\beta}{(k+\ell)\beta},
\end{equation}
which is similar to formula~\eqref{eq:split-degree-disjoint-union}. We define the~splitting degree on $A$ by setting $\sdeg v_+ = \sdegNum{0}{0}$ and $\sdeg v_-=\sdegNum{\phantom-1}{-1}$; Table~\ref{tab:sdeg-for-A2} contains degrees of generators of $A^{\otimes 2}$.

\begin{table}
	\begin{tabular}{r|cccc}
		\hline
		Generator    & $\vv++$ & $\vv+-$  & $\vv-+$  & $\vv--$ \\
		\hline
		$\deg$:
			& $(2,0)$ & $(1,-1)$ & $(1,-1)$ & $(0,-2)\mathrlap{\phantom{\dsdegNumTwice 0}}$\\
		$\sdeg$\rule[-3ex]{0pt}{1pt}:
			& $\dsdegNum{0}{0}$ & $\dsdegNum{\phantom-1}{-1}$ & $\dsdegNum{\phantom-0}{-2}$ & $\dsdegNum{\phantom-1}{-3}$\\
		\hline
	\end{tabular}
	\vskip 0.5\baselineskip
	\caption{Degrees of generators of the~second power of $A$.}\label{tab:sdeg-for-A2}
\end{table}

\begin{lemma}\label{lem:power-degrees}
	A~generator $v = \vvv{k,*\cdots,1}\in A^{\otimes k}$ is homogeneous of degree $\sdeg v = \sdegNum aa$ with
	$a=-\displaystyle{\sum_{\mathclap{v_i=v_-}}i}$.
\end{lemma}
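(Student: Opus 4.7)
The plan is to proceed by induction on $k$, peeling off the leftmost tensor factor $v_{i_k}$ using formula \eqref{eq:split-deg-Frob-tensor}. The statement is really just bookkeeping once the weights and degrees of the two building blocks $v_+$, $v_-$ are in hand.

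\textbf{Base case ($k=1$).} Direct from the definitions: $\sdeg v_+ = \sdegNum{0}{0}$ corresponds to $a=0$ (no $v_-$ appears), and $\sdeg v_- = \sdegNum{1}{-1}$ corresponds to $a=-1$, using that $-1\equiv 1\pmod{2}$ in the first coordinate.

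\textbf{Inductive step.} Assume the claim holds for $A^{\otimes(k-1)}$ and write $v = v_{i_k}\otimes v'$ with $v' = v_{i_{k-1}}\otimes\cdots\otimes v_{i_1}\in A^{\otimes(k-1)}$. The first thing to record is that $\wdeg{v_+}=\wdeg{v_-}=1$ as chronological $\Z\times\Z$-weights, and the chronological weight is additive on tensor products, so $\wdeg{v'}=k-1$. Plugging into \eqref{eq:split-deg-Frob-tensor} with $m=v_{i_k}$, $n=v'$ yields
\begin{equation*}
    \sdeg v \;=\; \sdeg v_{i_k} \;+\; \sdeg v' \;+\; \sdegNum{\beta_k(k-1)}{\beta_k(k-1)},
\end{equation*}
where $\beta_k$ is the second coordinate of $\deg v_{i_k}$, i.e.\ $\beta_k=0$ if $i_k=+$ and $\beta_k=-1$ if $i_k=-$. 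By the inductive hypothesis, $\sdeg v' = \sdegNum{b}{b}$ with $b=-\sum_{j<k,\,i_j=-}j$.

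If $i_k=+$, the correction term vanishes and $\sdeg v = \sdegNum{b}{b}$, which matches $a=b$ since position $k$ contributes nothing to the defining sum. If $i_k=-$, substituting the explicit degrees gives
\begin{equation*}
    \sdeg v \;=\; \sdegNum{1}{-1} + \sdegNum{b}{b} + \sdegNum{-(k-1)}{-(k-1)} \;=\; \sdegNum{b-k+2}{b-k},
\end{equation*}
which reduces modulo $2$ in the first coordinate to $\sdegNum{b-k}{b-k}$, matching $a=b-k$ because position $k$ now contributes $-k$. This closes the induction.

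\textbf{Main difficulty.} There is no serious obstacle; the argument is essentially mechanical. The only place one must stay alert is the distinction between the \emph{chronological} $\Z\times\Z$-weight $\wdeg{v'}$ that feeds into \eqref{eq:split-deg-Frob-tensor} and the splitting degree $\sdeg v'$ produced by the inductive hypothesis --- conflating the two would scramble the bookkeeping. Once they are kept apart, the formula $a=-\sum_{i_j=-}j$ emerges directly from the telescoping sum $\sum_{j}\beta_j(j-1)+\sum_j\sdeg v_{i_j}$.
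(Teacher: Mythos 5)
Your induction is correct and is exactly the ``easy induction argument'' the paper leaves to the reader: peel off the leftmost factor, use \eqref{eq:split-deg-Frob-tensor} with the left factor's second chronological coordinate $\beta$ and the additivity of the $\Z\times\Z$-weight (so $\wdeg{v'}=k-1$), and reduce the first coordinate mod~$2$. The computation also matches Table~\ref{tab:sdeg-for-A2}, so nothing further is needed.
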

\begin{proof}
	The~lemma follows from an~easy induction argument and is left to the~reader.
\end{proof}

\begin{proposition}\label{prop:FA-is-graded}
	The~functor $\FA\colon\scalars\ChCob\to\Mod\scalars$ preserves the~splitting degree.
\end{proposition}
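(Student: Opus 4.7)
The plan is to reduce the claim to a direct check on the six generating cobordisms of \eqref{diag:mor-gens}, exploiting the nearly identical forms of the disjoint-union formula \eqref{eq:split-degree-disjoint-union} on the cobordism side and the tensor-product formula \eqref{eq:sdeg-tensor-with-A} on the module side.

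First I would observe that every morphism in $\scalars\ChCob$ is a $\scalars$-linear combination of vertical compositions of elementary cobordisms of the form $\id_{a\bS}\sqcup W_0\sqcup\id_{b\bS}$, where $W_0$ is one of the six generators of \eqref{diag:mor-gens}. Since both splitting degrees---the one on cobordisms and the one on module maps---are additive under composition (on the module side this is immediate from the graded functoriality \eqref{eq:graded-tensor-functoriality}), functoriality of $\FA$ reduces the claim to showing $\sdeg\FA(E) = \sdeg E$ for each such elementary $E$.

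Next I would use the crucial observation that $A$ is supported in the single weight $\wdeg{v_+} = \wdeg{v_-} = 1$. Comparing \eqref{eq:split-degree-disjoint-union} with \eqref{eq:sdeg-tensor-with-A}, for any $E = \id_{a\bS}\sqcup W_0\sqcup\id_{b\bS}$ with $\chdeg W_0 = (\alpha,\beta)$ both sides produce exactly the same shift $\sdegNum{a\alpha+b\beta}{(a+b)\beta}$ relative to the uncontexualized generator. It is therefore enough to verify $\sdeg\FA(W_0) = \sdeg W_0$ for the six bare generators.

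Finally I would run through the six generators. For the merge, split, birth, and negative death the verification is a routine comparison using \eqref{eq:F-merge}--\eqref{eq:F-death}, the definitions $\sdeg v_+ = \sdegNum00$ and $\sdeg v_- = \sdegNum{1}{-1}$, Table~\ref{tab:sdeg-for-A2}, and the degrees $\sdeg\permMM = \sdeg\permSS = \sdegNum10$, $\sdeg\permMS = \sdegNum{\phantom-0}{-1}$. The positive death is handled via the chronological relation that expresses it as $\permSS$ times the negative death, combined with $\sdeg\permSS = \sdegNum10$. The twist is the only case that needs auxiliary input: by Lemma~\ref{lem:sdeg-can-isoms} the symmetry $\tau_{A,A}$ has degree $\sdegNum{1\cdot 1}{0} = \sdegNum10$, which matches the splitting degree of the twist cobordism declared in \eqref{eq:split-degree-other}.

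I do not foresee a genuine obstacle in this argument; what looks like a potential difficulty---the fact that $\sdeg$ is not additive under disjoint unions or tensor products---is precisely what Step 2 disposes of. The conceptual heart of the proof is that $v_+$ and $v_-$ share the same weight, so that the nonlinear weight-dependent corrections in \eqref{eq:split-degree-disjoint-union} and \eqref{eq:sdeg-tensor-with-A} match identically. Had the two generators of $A$ carried different weights, the parallel between these two formulas would break and one would have to reckon with weight bookkeeping on a case-by-case basis.
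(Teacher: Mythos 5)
Your argument is correct and is essentially the paper's own proof: the paper likewise reduces to the generating cobordisms by combining Lemma~\ref{lem:sdeg-can-isoms} (which handles the twist via the symmetry) with formula~\eqref{eq:sdeg-tensor-with-A}, whose match with~\eqref{eq:split-degree-disjoint-union} rests on $\wdeg{v_+}=\wdeg{v_-}=1$, and then checks the maps \eqref{eq:F-merge}--\eqref{eq:F-death} against the degrees of the generators of $A^{\otimes k}$ (Lemma~\ref{lem:power-degrees}). You merely spell out the same skeleton in more detail, including the positive death via relation~\eqref{rel:reverse-orientation}, which the paper leaves implicit.
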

\begin{proof}
	In the~view of Lemma~\ref{lem:sdeg-can-isoms} and formula \eqref{eq:sdeg-tensor-with-A} it is enough to check that the~four maps \eqref{eq:F-merge}--\eqref{eq:F-death} have the~same degrees as the~corresponding cobordisms. This follows directly from the~expressions for these maps and Lemma~\ref{lem:power-degrees}.
\end{proof}

\section{Invariance revisited}\label{sec:invariance}
\def\proofpart#1{\medskip\noindent\textit{#1.}\space\ignorespaces}%
We shall introduce the~splitting degree to the~generalized Khovanov complex. Choose a~link diagram $D$ and construct its cube of resolutions $\KhCubeSigned{D}{\epsilon}$ corrected by a~certain sign assignment $\epsilon$. For every vertex $\xi$ choose a~directed path to $\xi$ originating at the~initial vertex $(0,\dots,0)$, and denote by $W_\xi$ the~cobordism the~path encodes.\footnote{
	The~path is empty if $\xi=(0,\dots,0)$, in which case $W_\xi$ is the~cylinder $D_\xi\times I$.
}
Shift vertices of the~cube by degrees of the~cobordisms $W_\xi$:
\begin{equation}
	\KhCubeSigned{D}{\epsilon}(\xi) := D_\xi\{\sdeg W_\xi\}.
\end{equation}
Because the~faces anticommute, $\sdeg W_\xi$ does not depend on the~path chosen. This modification results in a~cube in the~category $\kChCob_0$, i.e.\ all morphisms preserve the~splitting degree. In particular, the~differential in the~generalized Khovanov bracket $\KhBracket{D}_\epsilon$ is a~degree-preserving map.

\begin{theorem}\label{thm:inv-graded-complex}
	The~homotopy type of the~graded generalized Khovanov complex is a~link invariant. In particular, the~generalized Khovanov homology $\Kh(L)$ admits a~$\Z_2\times\Z$-grading coherent with the~action of\/ $\scalars$.
\end{theorem}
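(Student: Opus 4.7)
The plan is to upgrade the ungraded invariance result Theorem~\ref{thm:invariance} of \cite{ChCob} by verifying that every chain equivalence and homotopy appearing in the original proof actually lives in the subcategory $\kChCob_0$ of degree-preserving morphisms. Before touching Reidemeister moves, I would first confirm that the shifts $\sdeg W_\xi$ are path-independent, so the graded cube is well defined. On a face of $\KhCubeSigned{D}{\epsilon}$ the two compositions around the face agree up to a unit in $\scalars$ coming from the cocycle $\psi$ and the sign assignment $\epsilon$, and by Proposition~\ref{prop:sdeg-vs-chronology} every chronological relation used to identify the two compositions is splitting-degree homogeneous, so any two paths to $\xi$ give cobordisms of the same splitting degree. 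It is then immediate that every component $\epsilon(\zeta)D_\zeta \colon D_\xi\{\sdeg W_\xi\}\to D_{\xi'}\{\sdeg W_{\xi'}\}$ of the differential has degree $\sdegNum{0}{0}$, since $W_{\xi'}$ can be chosen as $D_\zeta \circ W_\xi$.

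Next I would check that the three local relations S, T and 4Tu are themselves splitting-degree homogeneous, that is, the two sides of each equation carry equal splitting degree once the scalars $\permMM$, $\permSS$, $\permMS$ are taken into account. This is a direct computation using \eqref{eq:split-degree-merge}--\eqref{eq:split-degree-other} and \eqref{eq:split-degree-disjoint-union} together with Lemma~\ref{lem:chdeg-vs-bdry}; without it, the quotient category on which the graded complex lives would not be well-defined.

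The heart of the proof is Reidemeister invariance, and here I would revisit the chain equivalences from \cite{ChCob} one move at a time. For each of R1, R2 and R3, the equivalences and homotopies are explicit $\scalars$-linear combinations of elementary cobordisms (births, deaths, saddles, cylinders) decorated with the shifts inherited from the graded cubes on either side. Using Proposition~\ref{prop:sdeg-vs-chronology} and formula~\eqref{eq:split-degree-disjoint-union} one computes the splitting degree of every such cobordism; then, provided the paths defining the vertex shifts on the source and target cubes are chosen compatibly, every component of every map is verified to be homogeneous of degree $\sdegNum{0}{0}$. The main obstacle will be the bookkeeping for R3, whose proof passes through several intermediate complexes and involves homotopies built from nontrivial combinations of saddles and cylinders; making a coherent choice of paths on both sides so that the intertwining maps are splitting-degree zero is the step that needs the most care.

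Once $\KhCom(D)$ is established as a graded complex in $\kChCob_0$, unique up to graded homotopy equivalence, the final sentence is formal: Proposition~\ref{prop:FA-is-graded} says $\FA$ preserves the splitting degree, so $\FA\KhCom(D)$ is a complex in $\Mod\scalars$ whose differential is degree-preserving, and hence the homology $\Kh(L)$ inherits a $\Z_2\times\Z$-grading compatible with the action of $\scalars$.
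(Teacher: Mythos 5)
Your overall strategy---re-run the invariance proof of Theorem~\ref{thm:invariance} and check splitting degrees of the explicit equivalences---is the same as the paper's, and your first steps (path-independence of the shifts, homogeneity of the relations \textit{S}, \textit{T}, \textit{4Tu}) agree with it. The gap is in how you propose to finish: you want every component of every equivalence and homotopy to be homogeneous of degree exactly $\sdegNum{0}{0}$, and you plan to arrange this by `choosing the paths defining the vertex shifts compatibly'. There is no such freedom: as you argue yourself in your first paragraph, $\sdeg W_\xi$ is path-independent, so the shifts are canonical and cannot be tuned. Moreover, the maps from \cite{ChCob} carry invertible scalars---the sign-assignment values $\epsilon$, the per-component correction factors $\alpha$ in the R1 maps, the factors $\gamma,\varphi$ in R2---which are monomials in $\permMM,\permSS,\permMS$ of generally nonzero splitting degree, so these components are \emph{not} of degree $\sdegNum{0}{0}$, and a verification in the form you state would fail. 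What the degree computations actually yield is homogeneity, and the missing idea is the reduction from homogeneous to graded: by Observation~\ref{obs:homog-decomp} a homogeneous map of degree $\sdegNum ab$ becomes degree-preserving after scaling by $\permMM^a\permMS^b$, and by Lemma~\ref{lem:one-comp-is-OK} (which rests on the non-degeneracy Proposition~\ref{prop:kChCob-nondeg}) all components of a chain map commuting with degree-preserving differentials share one degree, so a single global rescaling---harmless for the homotopy type---produces a graded equivalence. Without these two devices you also have no mechanism for the auxiliary choices (sign assignment, arrow directions, orderings of crossings and circles), which the paper handles by exactly this argument and which are part of well-definedness of the invariant.

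Two smaller points. In your path-independence step, `the two compositions agree up to a unit' does not by itself give equal degrees, since the cocycle values can have nonzero splitting degree (e.g.\ $\permMM$ for two distant merges); one must compare the $\epsilon$-corrected composites, which agree up to $-1$, and the same caveat is needed for your claim that each component $\epsilon(\zeta)D_\zeta$ of the differential has degree $\sdegNum{0}{0}$. Finally, your expectation that R3 is the heaviest bookkeeping is misplaced: in \cite{ChCob} the R3 equivalence comes from a purely algebraic mapping-cone argument applied after R2, so once R1 and R2 are graded no new degree computations are needed there, and the nontrivial R2 components are composites of differentials and homotopies, hence automatically of the correct degree.
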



For Theorem~\ref{thm:inv-graded-complex} to make sense, the~relations \textit{S}, \textit{T} and \textit{4Tu} from Theorem~\ref{thm:invariance} must be homogeneous. This follows from a~direct computation. Our goal is to show that all isomorphisms involved in the~proof of invariance from \cite{ChCob} are homogeneous---this is enough, as any homogeneous isomorphism can be made graded by scaling it with some monomial $\permMM^a\permMS^b$, see Observation~\ref{obs:homog-decomp}. We first show that the~grading does not depend on the~extra choices made in the~construction of the~generalized Khovanov bracket. The~key tool is the~following result.

\begin{lemma}\label{lem:one-comp-is-OK}
	Suppose there is a~commutative square in $\scalars\ChCob$
	\begin{equation}
		\begin{diagps}(0,-1ex)(6em,10ex)
			\square(0,0)<6em,8ex>[\Sigma_0`\Sigma_1`\Sigma'_0`\Sigma'_1;f_0`g`g'`f_1]
		\end{diagps}
	\end{equation}
	where each morphism is a~chronological cobordism scaled by an~invertible element from $\scalars$. If each $f_i$ is graded with respect to the~splitting degree, $\sdeg g=\sdeg g'$.
\end{lemma}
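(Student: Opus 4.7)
The plan is a one-line degree count on the commutativity relation $g' \circ f_0 = f_1 \circ g$. The first step is to verify that all four morphisms carry a well-defined value of $\sdeg$: every unit of $\scalars$ is, up to sign, a monomial $\permMM^a\permSS^b\permMS^c$ with $a,b\in\{0,1\}$ and $c\in\Z$ (this follows from the classification of units in $\Z[\permMM,\permSS,\permMS^{\pm1}]$ together with the two quadratic relations), so each unit is homogeneous with respect to the splitting degree. Consequently each of the four morphisms, being a single chronological cobordism scaled by such a unit, is itself homogeneous. By Proposition~\ref{prop:sdeg-vs-chronology} the splitting degree descends to the quotient category $\scalars\ChCob$, so applying $\sdeg$ to both sides of the commutativity relation is legitimate.

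Applying additivity of $\sdeg$ under composition to $g' \circ f_0 = f_1 \circ g$ yields
\[
\sdeg g' + \sdeg f_0 \;=\; \sdeg f_1 + \sdeg g,
\]
equivalently $\sdeg g - \sdeg g' = \sdeg f_0 - \sdeg f_1$. The hypothesis that each $f_i$ is graded with respect to the splitting degree forces $\sdeg f_0 = \sdeg f_1$ (both being the common degree of the underlying saddle type, or equivalently both being degree-preserving between the appropriately shifted source and target in $\kChCob_0$). Hence the right-hand side vanishes and $\sdeg g = \sdeg g'$, as required.

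No serious obstacle is anticipated; the argument is essentially bookkeeping once Proposition~\ref{prop:sdeg-vs-chronology} is invoked. The only subsidiary checks are the classification of units of $\scalars$ and the precise interpretation of the word \emph{graded}, the latter being fixed by the ambient construction in which this lemma will be applied within the proof of Theorem~\ref{thm:inv-graded-complex}.
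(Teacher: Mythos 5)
There is a genuine gap, and it sits exactly where the paper's proof has its only content. You obtain $\sdeg g' + \sdeg f_0 = \sdeg f_1 + \sdeg g$ by ``applying $\sdeg$ to both sides'' of $g'f_0 = f_1 g$, justified by Proposition~\ref{prop:sdeg-vs-chronology}. But that proposition only says the chronological relations are homogeneous, i.e.\ the morphism modules of $\scalars\ChCob$ are graded; it does not make $\sdeg$ a function you can evaluate on an equation between morphisms. In a graded module the zero element is homogeneous of every degree, so from the equality of the two composites (each a single cobordism scaled by a unit, hence homogeneous) you may conclude only that \emph{either} their degrees agree \emph{or} the common value is $0$. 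If the composite vanished in $\scalars\ChCob$, the commutativity of the square would carry no degree information whatsoever and your computation would prove nothing. Ruling out this vanishing is the entire point of the paper's proof: it reduces the lemma to showing $f_1 g = g' f_0 \neq 0$ and invokes the non-degeneracy result, Proposition~\ref{prop:kChCob-nondeg}, which says a chronological cobordism cannot be annihilated by a monomial --- and since the units of $\scalars$ are, as you correctly note, $\pm$ monomials $\permMM^a\permSS^b\permMS^c$, the composite is indeed nonzero. Your write-up never addresses this possibility, so the key step is missing.

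Once the appeal to Proposition~\ref{prop:kChCob-nondeg} is added, the rest of your bookkeeping is fine and matches the intended argument: units are homogeneous, $\sdeg$ is additive under composition of homogeneous morphisms, and the hypothesis that the $f_i$ are graded (read as degree-preserving with respect to the shifted objects, degree $\sdegNum{0}{0}$, which is how the lemma is used in the proof of Theorem~\ref{thm:inv-graded-complex}) gives $\sdeg f_0 = \sdeg f_1$; your parenthetical about ``the common degree of the underlying saddle type'' is unnecessary, since in the applications the $f_i$ need not be single saddles.
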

\begin{proof}
	It is enough to show that the~composition $f_1g = g'f_0$ does not vanish. This follows from Proposition~\ref{prop:kChCob-nondeg}.
\end{proof}

\proofpart{Sign assignments}
Given two sign assignments $\epsilon_1$ and $\epsilon_2$ of the~cube $\KhCube{D}$, the~corrected cubes $\KhCubeSigned{D}{\epsilon_1}$ and $\KhCubeSigned{D}{\epsilon_2}$ are isomorphic via a~family of morphisms $f_\xi := \nu(\xi)\id$, where $\nu\in C^0(I^n;\invScalars)$ is a~cochain such that $\epsilon_2=\delta\nu\cdot\epsilon_1$. Hence, each $f_\xi$ is a~homogeneous map.

\proofpart{Arrows over crossings}
Choose link diagrams $D$ and $D'$ that differ only in the~direction of arrows decorating the~crossings. Then $\KhCube{D'}$ is the~cube $\KhCube{D}$ with some edges scaled by $\permMM$ or $\permSS$, due to \eqref{rel:reverse-orientation}. These coefficients define a~cochain $\lambda\in C^1(I^n;\invScalars)$, and if $\epsilon$ is a~sign assignment for $\KhCube{D}$, so is $\epsilon\lambda^{-1}$ for $\KhCube{D'}$. One can easily see that $\KhCubeSigned{D}\epsilon = \KhCubeSigned{D'}{\epsilon\lambda^{-1}}$.

\proofpart{Orderings on crossings and circles}
A~change in enumeration of crossings permutes only the~summands in \eqref{eq:bracket-def}. On the~other hand, each component of the~isomorphism of cubes that reorders circles in resolutions is a~composition of twists. Hence, it is homogeneous, and we again use Lemma~\ref{lem:one-comp-is-OK} to deduce all components have the~same splitting degree.

\begin{corollary}
	The~isomorphism class of the~graded generalized Khovanov bracket $\KhBracket{D}$ depends only on the~link diagram $D$.
\end{corollary}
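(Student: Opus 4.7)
The plan is to collect the three ``proof parts'' just displayed into a single statement. Beyond the link diagram $D$ itself, the construction of $\KhBracket{D}$ depends on four pieces of auxiliary data: a sign assignment $\epsilon$, a choice of little arrows decorating the crossings, an enumeration of the crossings, and an ordering of the circles in each resolution $D_\xi$. The three paragraphs above have already produced explicit cube isomorphisms between the brackets associated to different choices of each of these pieces, so my task is only to check that each such isomorphism makes sense in the graded category and then compose them.

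By Observation~\ref{obs:homog-decomp}, an isomorphism in $\catAdd{\kChCob}$ lifts to a graded one once each of its components is homogeneous with respect to $\sdeg$: one rescales the components by monomials $\permMM^a\permMS^b$ and adjusts the degree shifts on the vertices accordingly. For the sign-assignment isomorphism the components $\nu(\xi)\id_{D_\xi}$ are scalar multiples of identities and are trivially homogeneous. For a change of arrow decorations the corrected cubes are literally equal, so nothing needs to be checked. For a re-enumeration of crossings the isomorphism merely permutes the direct summands in \eqref{eq:bracket-def}, which is automatically graded.

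The only case with genuine content is the reordering of circles within each resolution, whose components are compositions of twists $\textcobordism*[2](P)$. By \eqref{eq:split-degree-other} a single twist has homogeneous splitting degree $\sdegNum{1}{0}$, hence so does any composition of them. To conclude that the entire cube isomorphism carries a \emph{uniform} degree shift I would apply Lemma~\ref{lem:one-comp-is-OK} square by square: the horizontal sides of each square are saddles in $\KhCubeSigned{D}\epsilon$ (degree-preserving by construction of the grading) and the vertical sides are the reordering twists, so the lemma equates the degrees of adjacent components; connectedness of the cube then propagates a single common degree across all vertices.

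The main obstacle is precisely the bookkeeping in this last step: one must ensure that the squares one feeds into Lemma~\ref{lem:one-comp-is-OK} actually commute in $\scalars\ChCob$, which relies on having transported the sign assignment $\epsilon$ coherently across the reordering of circles. Once this is verified, composing the four types of isomorphisms yields an isomorphism between any two graded brackets built from $D$, and the corollary follows.
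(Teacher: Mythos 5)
Your proposal is correct and follows essentially the same route as the paper: the paper's own argument is exactly the assembly of the three preceding proof-parts, with the sign-assignment and arrow-change cases handled by homogeneity/literal equality, crossing re-enumeration by permuting summands, and the circle-reordering case by observing that its components are compositions of twists and invoking Lemma~\ref{lem:one-comp-is-OK} (together with Observation~\ref{obs:homog-decomp}) to get a uniform degree. The commutativity you worry about in the last step is automatic, since the reordering map is by construction a morphism of cubes, so nothing further needs to be verified.
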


\noindent
We shall now proceed to Reidemeister moves. Our goal is to show that the~chain homotopy equivalences defined in \cite{ChCob} are homogeneous. We shall recall how they are defined, but a place for the diagram for clarity all cobordisms are drawn without arrows orienting their critical points. The~convention to keep in mind is that deaths are oriented clockwise, whereas arrows orienting merges and splits point towards right or front.

\proofpart{Reidemeister I}
\wrapfigure[r]<1>{\psset{unit=0.75cm}%
		\begin{pspicture}(-5.2,0)(5.5,5.7)
			\diagnode tl(0,5)[\vcenter{\hbox{\tangleRIh}}]
			\diagnode tr(5,5)[0]
			\diagnode bl(0,1)[\vcenter{\hbox{\tangleRIv}}]
			\diagnode br(5,1)[\vcenter{\hbox{\tangleRIh}}]
			\diagarrow|a|{<->}[tl`tr;0]
			\diagarrow|a|{<->}[tr`br;0]
			\diagarrow[offset=-2pt]|b{labelsep=3pt}|{->}[bl`br;d\,=\,\epsilon\fntCobRId]
			\diagarrow[offset= 2pt]|a{labelsep=3pt}|{<-}[bl`br;h\,=\,-\epsilon^{-1}\fntCobRIh]
			\diagarrow[offset=-2pt]|b{labelsep=3pt}|{->}[tl`bl;\frac\permSS\alpha\left(\!\permMM\fntCobRIfa -\,\permMS\fntCobRIfb\!\right)\,=\,f]
			\diagarrow[offset= 2pt]|a{npos=0.3,labelsep=3pt}|{<-}[tl`bl;g\,=\,\alpha\permMM\permSM\fntCobRIg]
		\end{pspicture}}
The~bracket $\KhBracket*\fntRIx$ is the~mapping cone of the~chain map $\KhBracket*\fntRIv\to\KhBracket*\fntRIh$ induced by edges in the~cube $\KhCube*\fntRIx$ associated to the~distinguished crossing. The chain homotopy equivalences between complexes\linebreak $\KhBracket*\fntRIx$ and $\KhBracket*\fntRIh$ are induced by morphisms of cubes $f\colon\KhCube\fntRIh\twoways\KhCube\fntRIv\cocolon g$ as shown in the~diagram to the~right. Here, $\epsilon$ comes from the~sign assignment used to build $\KhBracket*\fntRIx$, and $\alpha\in\scalars$ is chosen for each component of $f$ and $g$ separately, to make them commute with other edge morphisms in the~cubes. It follows directly from Lemma~\ref{lem:one-comp-is-OK} that $g$ induces a~homogeneous chain map, and for $f$ we have to check that the~two cobordisms have the~same degree. Indeed,
\begin{align}
	\sdeg\Big(\permMS\fntCobRIfb\Big) &= \sdegNum{\phantom-0}{-1} + \sdegNum{\phantom-0}{-2} = \sdegNum{\phantom-0}{-3},\\
	\sdeg\Big(\permMM\fntCobRIfa\Big) &= \sdegNum{1}{0} + \sdegNum{0}{0} + \sdegNum{-1}{-3} + \sdegNum{0}{0} = \sdegNum{\phantom-0}{-3}
\end{align}
after forgetting the~circles not shown in the~diagrams, and placing the~circle drawn in full as the~first one.
	
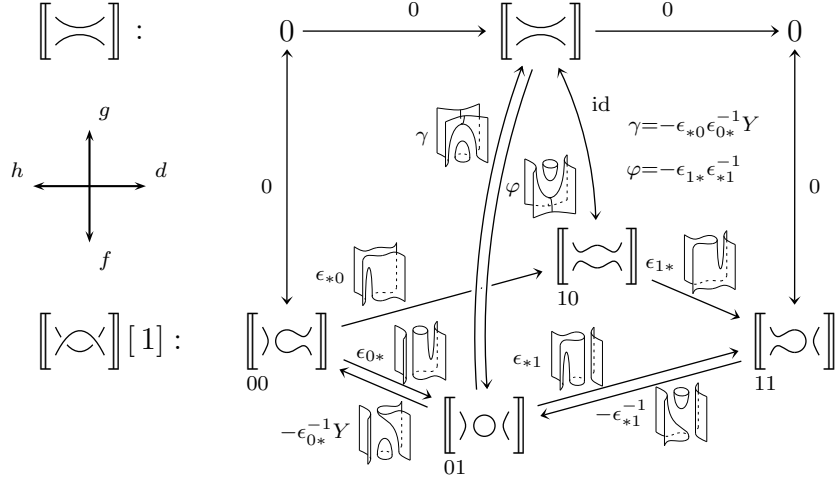
\begin{figure}[t]
	\psset{unit=0.75cm}
	\begin{pspicture}(0,0)(12,8.5)
		\newpsobject{diagarc}{ncarc}{%
				linewidth=0.5pt,doublesep=1.5pt,
				arrowsize=4pt 1,arrowlength=0.8,arrowinset=0.6,
				labelsep=2pt,nodesep=3pt}
		\rput[l](-1,8.0){$\KhBracket*{\vcenter{\hbox{\tangleRIIhhs}}}:$}
		\rput[l](-1,2.5){$\KhBracket*{\vcenter{\hbox{\tangleRIIxx}}}\![\,1]:$}
		\psline{<->}( 0,6.25)(0,4.25)\uput[ur]( 0,6.25){$\scriptstyle g$}\uput[dr](0,4.25){$\scriptstyle f$}
		\psline{<->}(-1,5.25)(1,5.25)\uput[ul](-1,5.25){$\scriptstyle h$}\uput[ur](1,5.25){$\scriptstyle d$}
		\rput( 3.5,8.0){\rnode{t-1}{$0$}}
		\rput( 8.0,8.0){\rnode{t 0}{$\KhBracket*{\vcenter{\hbox{\tangleRIIhhs}}}$}}
		\rput(12.5,8.0){\rnode{t 1}{$0$}}
		\rput( 3.5,2.5){%
			\rlap{\raisebox{-3ex}[0pt][0pt]{$\scriptstyle\mskip\thickmuskip 00$}}%
			\rnode{b00}{$\KhBracket*{\vcenter{\hbox{\tangleRIIvh}}}$}%
		}
		\rput( 7.0,1.0){%
			\rlap{\raisebox{-3ex}[0pt][0pt]{$\scriptstyle\mskip\thickmuskip 01$}}%
			\rnode{b01}{$\KhBracket*{\vcenter{\hbox{\tangleRIIvv}}}$}%
		}
		\rput( 9.0,4.0){%
			\rlap{\raisebox{-3ex}[0pt][0pt]{$\scriptstyle\mskip\thickmuskip 10$}}%
			\rnode{b10}{$\KhBracket*{\vcenter{\hbox{\tangleRIIhh}}}$}%
		}
		\rput(12.5,2.5){%
			\rlap{\raisebox{-3ex}[0pt][0pt]{$\scriptstyle\mskip\thickmuskip 11$}}%
			\rnode{b11}{$\KhBracket*{\vcenter{\hbox{\tangleRIIhv}}}$}%
		}
		\diagline{->}{t-1}{t 0}\naput{$\scriptstyle 0$}
		\diagline{->}{t 0}{t 1}\naput{$\scriptstyle 0$}
		\diagline{->}{b00}{b10}\naput[labelsep=-1pt,npos=0.3]{$\scriptstyle \epsilon_{{*}0}\fntCob{R2-d*0}$}
		\diagline{->}{b10}{b11}\naput[labelsep=-7pt,npos=0.4]{$\scriptstyle \epsilon_{1{*}}\fntCob{R2-d1*}$}
		\diagline[offset= 2pt]{->}{b00}{b01}
				\naput[labelsep=-6pt,npos=0.5]{$\scriptstyle \epsilon_{0{*}}\fntCob{R2-d0*}$}
		\diagline[offset= 2pt]{->}{b01}{b11}
				\naput[labelsep=-1pt,npos=0.3]{$\scriptstyle \epsilon_{{*}1}\fntCob{R2-d*1}$}
		\diagline[offset=-2pt]{<-}{b00}{b01}
				\nbput[labelsep= 0pt,npos=0.5]{$\scriptstyle-\epsilon_{0{*}}^{-1}\permSS\fntCob{R2-h0*}$}
		\diagline[offset=-2pt]{<-}{b01}{b11}
				\nbput[labelsep=-6pt,npos=0.5]{$\scriptstyle-\epsilon_{{*}1}^{-1}\fntCob{R2-h*1}$}
		\diagline{<->}{b00}{t-1}\naput{$\scriptstyle 0$}
		\diagline{<->}{b11}{t 1}\nbput{$\scriptstyle 0$}
		\diagarc[arcangle= 12]{<->}{t 0}{b10}
				\naput[npos=0.3]{$\scriptstyle\id$}
		\diagarc[arcangle= 12,offset= 2pt,border=3\pslinewidth]{->}{b01}{t 0}
				\naput[labelsep=-2pt,npos=0.75]{$\scriptstyle\gamma\fntCob{R2-G}$}
		\diagarc[arcangle= 12,offset=-2pt,border=3\pslinewidth]{<-}{b01}{t 0}
				\nbput[labelsep=-1pt,npos=0.65]{$\scriptstyle\varphi\!\fntCob{R2-F}$}
		\rput[c](10.7,6){$\setlength\arraycolsep{0pt}\begin{array}{rl}
				\scriptstyle\gamma  &\scriptstyle= -\epsilon_{{*}0}^{\vphantom{-1}}\epsilon_{0{*}}^{-1}\permSS\\
				\scriptstyle\varphi &\scriptstyle= -\epsilon_{1{*}}^{\vphantom{-1}}\epsilon_{{*}1}^{-1}
			\end{array}$}
	\end{pspicture}
	\caption{Chain homotopy equivalences for the~second Reidemeister move.}\label{fig:khov-inv-R2}
\end{figure}

\proofpart{Reidemeister II}
Homotopy equivalences for the~second move are shown in Fig.~\ref{fig:khov-inv-R2}. Again, we look on $\KhBracket\fntRIIxx$ as the~total complex of $\KhBracket*\fntRIIvh\to\KhBracket*\fntRIIvv\oplus\KhBracket*\fntRIIhh\to\KhBracket*\fntRIIhv$. Although it looks more challenging, the~way the~morphisms $f$ and $g$ are defined makes the~proof very easy. Indeed, the~morphisms between $\fntRIIhhs$ and $\fntRIIvv$ are compositions of edge morphisms and homotopies: $f_{01} = h_{{*}1}d_{1{*}}$, and $g_{01} = d_{{*}0}h_{0{*}}$. Taking into account the~degree shifts, the~differentials and homotopies preserve $\sdeg$, which implies both $f_{01}$ and $g_{01}$ have the~same degree as the~identity morphisms between $\fntRIIhhs$ and $\fntRIIhh$.

\proofpart{Reidemeister III}
Invariance under the~last move followed from a~strictly algebraic argument: the~complex $\KhBracket*\fntRIIIax$ is the~mapping cone of the~chain map $\KhBracket*\fntRIIIac\colon\KhBracket*\fntRIIIah\to\KhBracket*\fntRIIIav$, and composing it with the~chain homotopy equivalence $f\colon\KhBracket*\fntIIIVert\to\KhBracket*\fntRIIIah$ does not change the~homotopy type of the~mapping cone, see \cite{ChCob}. Hence, $\KhBracket*\fntRIIIax\simeq\cone(\KhBracket*\fntIIIVert\to\KhBracket*\fntRIIIav)$ via degree-preserving chain homotopy equivalences, and similarly for $\KhBracket*\fntRIIIbx$.

\medskip\noindent
This ends the~proof of Theorem~\ref{thm:inv-graded-complex}.\hfill$\square$%

\section{Applications}
\subsection{Reduction of parameters}
Let $\scalars_0\subset\scalars$ be the~subring of degree zero elements. It is generated by $\permMM\permSS$, and as such it isomorphic to $\Zpi := \ZpiLong$. On the~other hand, there is a~ring epimorphism $\scalars\longrightarrow\Zpi$ sending both $\permMM$ and $\permMS$ to $1$, and $\permSS$ to $\pi$. In particular, we can construct $\Zpi\ChCob$, see Remark~\ref{rmk:other-coeffs-for-chcob}.

\begin{lemma}
	The~pair of\/ $\scalars_0$-linear functors $I\colon\Zpi\ChCob\twoways\kChCob_0\cocolon P$,
	\begin{align*}
		I(\Sigma) &:= \Sigma\{0,0\},&
					P(\Sigma\{a,b\}) &:= \Sigma,\\
		I(\pi^k W) &:= \permMM^{a+k}\permSS^k\permMS^b\,W,
		\quad\sdeg W = {\textstyle\sdegNum ab},&
					P(\permMM^p\permSS^q\permMS^r\,W) &:= \pi^q W,
	\end{align*}
	is an~equivalence of categories.
\end{lemma}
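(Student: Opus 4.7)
The plan is to prove the equivalence by verifying $I$ and $P$ are well-defined $\scalars_0$-linear functors, then checking $P\circ I = \id_{\Zpi\ChCob}$ on the nose and constructing a natural isomorphism $I\circ P\cong\id_{\kChCob_0}$. The crucial structural observation is that $\scalars$ is a free $\scalars_0$-module of rank one in each homogeneous degree: a scalar of degree $\sdegNum \alpha \beta$ has a unique expression as $\permMM^\alpha\permMS^{-\beta}u$ for some $u\in\scalars_0$, and $\scalars_0\cong\Zpi$ under the identification $\permMM\permSS\leftrightarrow\pi$.

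First I would verify that $I$ lands in $\kChCob_0$: for a cobordism $W$ with $\sdeg W=\sdegNum a b$, the scalar $\permMM^{a+k}\permSS^k\permMS^b$ has degree $\sdegNum a{-b}$, so $I(\pi^k W)$ has degree $\sdegNum 00$ as required. Next I would check that $I$ respects each of the chronological relations \eqref{rel:reverse-orientation}--\eqref{rel:disjoint-union} after pushing through $\scalars\to\Zpi$. Most cases reduce to short calculations; the most subtle is the twisted disjoint union, whose coefficient $\pi^{bb'}$ in $\Zpi\ChCob$ must coincide with $I$ applied to $\lambda(\chdeg W,\chdeg W')$ in $\scalars\ChCob$ once the grading factors are absorbed. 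For $P$, the rank-one freeness guarantees that the assignment $\permMM^p\permSS^q\permMS^r W\mapsto\pi^q W$ is well-defined on homogeneous morphisms, since multiplication by the $\scalars_0$-unit $\permMM\permSS$ is carried precisely to multiplication by $\pi$. That both functors are $\scalars_0$-linear then follows from $\scalars_0\cong\Zpi$.

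The identity $P\circ I=\id$ is immediate from the formulas. For $I\circ P\cong\id_{\kChCob_0}$ I would take $\eta_{\Sigma\{\alpha,\beta\}}:=\permMM^\alpha\permMS^\beta\id_\Sigma$, which has $\scalars\ChCob$-degree $\sdegNum{\alpha}{-\beta}$ and hence becomes a degree-preserving morphism $\Sigma\{0,0\}\to\Sigma\{\alpha,\beta\}$ in $\kChCob_0$, invertible via $\permMM^\alpha\permMS^{-\beta}\id_\Sigma$ (using $\permMM^2=1$). Naturality against a general $f=\permMM^p\permSS^q\permMS^r W\colon\Sigma\{\alpha,\beta\}\to\Sigma'\{\alpha',\beta'\}$ (with $\sdeg W = \sdegNum a b$) reduces to showing that $\permMM^{p+\alpha}\permSS^q\permMS^{r+\beta}W$ and $\permMM^{\alpha'+a+q}\permSS^q\permMS^{\beta'+b}W$ coincide in $\scalars\ChCob$; the $\permMS$- and $\permSS$-exponents match on the nose from the degree-balance condition on $f$, while the $\permMM$-exponents agree modulo $2$ by the same constraint $p\equiv\alpha-\alpha'-q-a\pmod 2$. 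The asymmetry between $\permMM$ and $\permSS$ in $\eta$ is forced: $P$ records the $\permSS$-exponent through $\pi$ while erasing the $\permMM$-exponent, so only the $\permMM$-twist on identities can compensate.

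The main obstacle I anticipate is Step~1, namely the exact correspondence of relations. The chronological relations in $\scalars\ChCob$ mix all three generators, whereas in $\Zpi\ChCob$ they are flattened by $\permMM,\permMS\to 1$, and one must verify that neither relations nor information are lost or gained in this flattening. The rank-one freeness of each homogeneous component of $\scalars$ over $\scalars_0$, combined with the non-degeneracy result of Proposition~\ref{prop:kChCob-nondeg}, is exactly what guarantees the bijective correspondence between the two morphism quotients.
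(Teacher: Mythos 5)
Your proposal is correct and takes essentially the same route as the paper, whose entire proof is to note that $PI=\id$ on the nose and that multiplication by the monomials $\permMM^a\permMS^b$ on identities furnishes the natural isomorphism between $IP$ and the identity; your verification of well-definedness, functoriality and naturality just fills in the details the paper treats as immediate. One small remark: what actually guarantees that the relations correspond under the flattening is the homogeneity of the chronological relations (Proposition~\ref{prop:sdeg-vs-chronology}) together with Observation~\ref{obs:homog-decomp}, so your appeal to the non-degeneracy result of Proposition~\ref{prop:kChCob-nondeg} is unnecessary once the explicit quasi-inverse functors are in hand.
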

\begin{proof}
	Clearly $PI = \id$, and morphisms $\Sigma\{a,b\}\to^{\cdot\permMM^a\permMS^b}\Sigma$ form an~isomorphism $\id\cong IP$.
\end{proof}

Let $\KhCom_\pi(D)$ stand for the~generalized Khovanov complex built in $\catAdd{\Zpi\ChCob}$. Clearly, $\FA\KhCom(D;\Zpi) \cong \Fpi\KhCom_\pi(D)$, where $\Fpi\colon\Zpi\ChCob\to\Mod\Zpi$ is defined similarly to $\FA$.\footnote{
	Think of $\Fpi$ as a~tensor product $\FA\otimes\Zpi$ over $\scalars$.
}

\begin{corollary}\label{cor:KhCom-XYZ-vs-pi}
	The~generalized Khovanov complexes $\KhCom(D)$ and $\KhCom_\pi(D)$ are equivalent link invariants: $\KhCom(D)\simeq\KhCom(D')$ for link diagrams $D$ and $D'$ if and only if $\KhCom_\pi(D)\simeq\KhCom_\pi(D')$.
\end{corollary}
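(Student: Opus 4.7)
The plan is to apply the equivalence $I \dashv P$ of the preceding lemma, after lifting both Khovanov constructions to a common homotopy category. By Theorem~\ref{thm:inv-graded-complex}, the generalized Khovanov complex $\KhCom(D)$ admits a canonical refinement to a complex in $\catAdd{\kChCob_0}$ whose differential is degree-preserving, and its homotopy type in this graded category is a link invariant. I would first argue that any chain homotopy equivalence $\KhCom(D) \simeq \KhCom(D')$ in $\catAdd{\kChCob}$ can, after a suitable global shift, be realized in $\catAdd{\kChCob_0}$: by~\eqref{eq:mor-graded-shifted}, any morphism between graded objects decomposes into homogeneous components, and only the degree-preserving summand contributes to a map between graded lifts that are matched up to a common $\Z_2\times\Z$-shift.

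Next, the $\scalars_0$-linear equivalence $I\colon\Zpi\ChCob\twoways\kChCob_0\cocolon P$ extends formally to an equivalence of the additive closures $\catAdd{\Zpi\ChCob}\simeq\catAdd{\kChCob_0}$, and hence to an equivalence of their homotopy categories of bounded chain complexes. It remains to identify the images under $P$. On objects, $P$ forgets the degree shift, sending $D_\xi\{a,b\}$ back to $D_\xi$; on morphisms, each edge entry of the form $\permMM^p\permSS^q\permMS^r\,W$ is mapped to $\pi^q W$, which is exactly the corresponding edge morphism in the cube of resolutions built in $\catAdd{\Zpi\ChCob}$, where $\permMM$ and $\permMS$ act as $1$ and $\permSS$ as $\pi$. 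Consequently $P$ sends the graded lift of $\KhCom(D)$ to $\KhCom_\pi(D)$, up to the same choices of sign assignment and crossing orientations (which are absorbed by isomorphism, as in the first part of Section~\ref{sec:invariance}).

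Combining these two steps yields the stated equivalence of invariants in both directions: $\KhCom(D)\simeq\KhCom(D')$ in $\catAdd{\kChCob}$ if and only if their graded lifts are homotopy equivalent in $\catAdd{\kChCob_0}$, which by the equivalence $I, P$ happens if and only if $\KhCom_\pi(D)\simeq\KhCom_\pi(D')$. The main obstacle I anticipate is the refinement step at the beginning: showing that homotopy equivalence in the ungraded category $\catAdd{\kChCob}$ implies homotopy equivalence in the graded subcategory. This is ultimately a bookkeeping matter, resolved by noting that the graded lifts of $\KhCom(D)$ for different diagrams of a fixed link are well defined up to an overall $\Z_2\times\Z$-shift, so Observation~\ref{obs:homog-decomp} lets one extract the degree-preserving part of any given ungraded chain equivalence without loss of information.
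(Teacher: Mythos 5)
Your overall route---transport the problem through the equivalence $I\colon\Zpi\ChCob\twoways\kChCob_0\cocolon P$, extended to additive closures and to complexes, and identify $P$ of the graded lift of $\KhCom(D)$ with $\KhCom_\pi(D)$---is exactly how the paper reads the corollary off the preceding lemma together with Theorem~\ref{thm:inv-graded-complex}. The genuine problem is your first step, where you claim that an ungraded chain homotopy equivalence in $\catAdd{\kChCob}$ can be replaced, after a global shift, by its degree-preserving component. Decomposing a chain map $f$ between the graded lifts into homogeneous pieces $f=\sum f_{(k,\ell)}$ is fine (the differentials preserve the splitting degree, so each piece is a chain map), but homotopy equivalence does not decompose degreewise: from $gf=\id+dh+hd$ the degree-$\sdegNum00$ part only gives $\sum_{(k,\ell)}g_{(-k,-\ell)}f_{(k,\ell)}=\id+dh_{(0,0)}+h_{(0,0)}d$, and nothing forces the single term $g_{(0,0)}f_{(0,0)}$ to be homotopic to the identity. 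Observation~\ref{obs:homog-decomp} says each homogeneous summand of a morphism module is isomorphic to the degree-preserving one; it does not kill the cross terms. Moreover, for arbitrary diagrams $D$, $D'$ (which need not present the same link) you cannot invoke the invariance theorem to match the two graded lifts up to a common shift, so calling this step ``a bookkeeping matter'' hides the actual content.

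Fortunately the step is not needed. For the direction $\KhCom(D)\simeq\KhCom(D')\Rightarrow\KhCom_\pi(D)\simeq\KhCom_\pi(D')$ simply apply the base-change functor $\kChCob\to\Zpi\ChCob$ of Remark~\ref{rmk:other-coeffs-for-chcob}, induced by the ring map $\scalars\to\Zpi$ sending $\permMM,\permMS\mapsto1$ and $\permSS\mapsto\pi$: it carries $\KhCom(D)$ to $\KhCom_\pi(D)$ (up to the usual change of sign assignment, which is an isomorphism of complexes) and, being a functor, preserves chain homotopy equivalences; no grading is involved. The splitting degree and the equivalence $I,P$ are what the converse direction requires, and that part of your argument is correct: a homotopy equivalence $\KhCom_\pi(D)\simeq\KhCom_\pi(D')$ is carried by $I$ to a degree-preserving equivalence of the graded lifts, and forgetting the shifts gives an equivalence in $\catAdd{\kChCob}$. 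With your first paragraph replaced by this specialization argument and the remaining two kept as they are, the proof is complete and agrees in substance with the paper.
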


\noindent
There is a~similar equivalence between $\Mod{\scalars,0}$ and $\Mod\Zpi$. Extracting the~degree 0 component $M_0$ of a~$\scalars$-module $M$ results in a~functor $r\colon\Mod{\scalars,0}\longrightarrow\Mod\Zpi$. Dually, given a~$\Zpi$-module $N$ one creates a~$\scalars$-module $i(N) := \raisebox{0.1ex}[1ex][0ex]{$\underset{\mathclap{\scriptscriptstyle(a,b)\in\Z_2{\times}\Z}}\bigoplus$}\ N$, where $(\permMM\permSS)\cdot n = \pi n$, while $\permMM$ and $\permMS$ permute the~copies of $N$ in $i(N)$.

\begin{lemma}\label{lem:Mod-equiv}
	The~pair of functors $i\colon\Mod\Zpi\twoways\Mod{\scalars,0}\cocolon r$ is an~equivalence of categories.
\end{lemma}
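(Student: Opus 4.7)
The plan is to produce explicit natural isomorphisms in both directions, reducing everything to the fact that $\permMM$ and $\permMS$ are units in $\scalars$. First I would spell out the functor $i$ precisely: writing $n_{a,b}$ for the copy of $n\in N$ in the summand indexed by $(a,b)\in\Z_2\times\Z$, I would set $\permMM\cdot n_{a,b}=n_{a+1,b}$, $\permMS\cdot n_{a,b}=n_{a,b-1}$, and $\permSS\cdot n_{a,b}=(\pi n)_{a+1,b}$. One checks quickly that $\permMM^2=\permSS^2=1$ and $(\permMM\permSS)\cdot n_{a,b}=\pi n_{a,b}$, so this defines a valid $\scalars$-module, graded with $n_{a,b}$ in degree $(a,b)$. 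Similarly, $r$ is well-defined because $\scalars_0$ is generated by $\permMM\permSS$ and degree-preserving maps restrict to the degree-zero summand.

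The easy half is the natural isomorphism $\nu_N\colon r(i(N))\to N$: by construction $r(i(N))=i(N)_{(0,0)}=N$, which is canonical and clearly natural in $N$. The substantive half is $\mu\colon ir\Rightarrow\id_{\Mod{\scalars,0}}$. For $M\in\Mod{\scalars,0}$ the splitting degree gives the decomposition $M=\bigoplus_{(a,b)}M_{(a,b)}$. Because $\permMM$ is an involution and $\permMS$ is invertible, the element $\permMM^a\permMS^{-b}\in\scalars$ is a unit of degree $(a,b)$, so multiplication by it is a bijection $M_0\to M_{(a,b)}$. Assembling these into $\mu_M(m_{a,b}):=\permMM^a\permMS^{-b}m$ for $m\in M_0$ yields a graded bijection $i(r(M))\to M$.

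The step I expect to be most delicate is verifying that $\mu_M$ is $\scalars$-linear, which requires checking compatibility with each of $\permMM$, $\permSS$, $\permMS$ separately. The checks for $\permMM$ and $\permMS$ are immediate from the commutativity of $\scalars$. For $\permSS$ the identity to establish is $\permSS(\permMM^a\permMS^{-b}m)=\permMM^{a+1}\permMS^{-b}(\pi m)$, which by commutativity reduces to the assertion $\permSS m=\permMM\pi m$ for $m\in M_0$; this in turn follows from $(\permMM\permSS)m=\pi m$ together with $\permMM^2=1$. Naturality of $\mu$ is then automatic, since any degree-preserving $\scalars$-linear map commutes with multiplication by the unit $\permMM^a\permMS^{-b}$, completing the equivalence.
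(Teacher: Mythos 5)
Your proposal is correct and is essentially the argument the paper has in mind: the paper's proof is just ``Straightforward,'' and your unit isomorphism $\mu_M(m_{a,b})=\permMM^a\permMS^{-b}m$ is precisely the module analogue of the morphisms $\cdot\,\permMM^a\permMS^b$ used in the companion lemma on $I\colon\Zpi\ChCob\twoways\kChCob_0\cocolon P$ and in the proof of the reduction theorem. The points you flag as delicate (degree bookkeeping for $\permMM^a\permMS^{-b}$, $\scalars$-linearity with respect to $\permSS$ via $\permSS m=\permMM\pi m$ on $M_0$, and naturality) are exactly the checks needed, and they go through as you state.
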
	
\begin{proof}
	Straightforward.
\end{proof}

\noindent
The~two equivalences intertwine $\FA\colon\kChCob_0\to\Mod{\scalars,0}$ and $\Fpi\colon\Zpi\ChCob\to\Mod\Zpi$, resulting in a~direct connection between $\Kh(D)$ and $\Kh(D;\Zpi)$.

\begin{theorem}[The~reduction of parameters]\label{thm:reduction}
	The~generalized Khovanov complex $\FA\KhCom(D)$, regarded as a~complex of\/ $\Zpi$-modules, decomposes into a~direct sum of subcomplexes
	\begin{equation}\label{eq:Kh-split}
		\FA\KhCom(D)\cong\bigoplus_{\mathclap{\quad(a,b)\in\Z_2\times\Z}\quad}\FA\KhCom(D)_{a,b},
	\end{equation}
	each isomorphic to $\FA\KhCom(D;\Zpi)\cong\Fpi\KhCom_\pi(D)$. The~action of\/ $\scalars$ is given by isomorphisms
	\begin{equation}
		\begin{cases}
			\permMM\colon\FA\KhCom(D)_{a,b}\to<2em>^{\id}\FA\KhCom(D)_{a+1,b},\\
			\permSS\colon\FA\KhCom(D)_{a,b}\to<2em>^{\cdot\pi}\FA\KhCom(D)_{a+1,b},\\
			\permMS\colon\FA\KhCom(D)_{a,b}\to<2em>^{\id}\FA\KhCom(D)_{a,b+1}.
		\end{cases}
	\end{equation}
\end{theorem}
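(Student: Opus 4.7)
The plan is to exploit the $\Z_2\times\Z$-grading on $\FA\KhCom(D)$ established in Theorem~\ref{thm:inv-graded-complex}, together with the two category equivalences just proved. Since the differentials of $\KhCom(D)$ are morphisms in $\kChCob_0$ and $\FA$ preserves the splitting degree by Proposition~\ref{prop:FA-is-graded}, the complex decomposes as a direct sum of subcomplexes indexed by $(a,b)\in\Z_2\times\Z$, each being the homogeneous component of a given splitting degree. The subring $\scalars_0\subset\scalars$ of degree-$(0,0)$ elements acts on each summand and is generated by $\permMM\permSS$, so it is canonically isomorphic to $\Zpi$ via $\permMM\permSS\mapsto\pi$; this makes each $\FA\KhCom(D)_{a,b}$ into a complex of $\Zpi$-modules and yields the decomposition \eqref{eq:Kh-split}.

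I would next identify each summand with $\Fpi\KhCom_\pi(D)$. For the degree-$(0,0)$ component, the extraction functor $r\colon\Mod{\scalars,0}\to\Mod{\Zpi}$ from Lemma~\ref{lem:Mod-equiv} composed with $\FA|_{\kChCob_0}$ agrees with $\Fpi\circ P$, where $P\colon\kChCob_0\to\Zpi\ChCob$ is the equivalence from the preceding lemma; this is verified by comparing values on a circle and on each of the six generating cobordisms, using that the formulas \eqref{eq:F-merge}--\eqref{eq:F-death} become the analogous formulas for $\Fpi$ under the substitution $\permMM,\permMS\mapsto 1$, $\permSS\mapsto\pi$. Together with Corollary~\ref{cor:KhCom-XYZ-vs-pi}, this gives $\FA\KhCom(D)_{0,0}\cong\Fpi\KhCom_\pi(D)$. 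For the remaining summands, the scalars $\permMM$ and $\permMS^{\pm1}$ furnish $\Zpi$-module isomorphisms between different homogeneous pieces, since they act invertibly and shift the splitting degree by $\sdegNum{1}{0}$ and $\sdegNum{0}{\mp1}$ respectively; being $\scalars$-linear, they commute with the differential and with the central action of $\pi=\permMM\permSS$, so they are isomorphisms of $\Zpi$-module complexes. Consequently every $\FA\KhCom(D)_{a,b}$ is isomorphic to $\Fpi\KhCom_\pi(D)$.

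The action formulas then follow from the splitting degrees of the generators together with the identity $\permSS=\permMM\cdot(\permMM\permSS)$ in $\scalars$, which forces multiplication by $\permSS$ to coincide with the $\permMM$-shift post-composed with multiplication by $\pi$. The main delicacy I foresee is the bookkeeping of the indexing convention---aligning the label $(a,b)$ on each summand so that $\permMS$-multiplication carries $(a,b)$ to $(a,b+1)$ rather than to $(a,b-1)$, which requires fixing a sign in how copies are parametrized---and the systematic verification that the images of the generating cobordisms under $\FA$ indeed reduce modulo the ideal $(\permMM-1,\permMS-1)$ to the expected formulas defining $\Fpi$. These are essentially bookkeeping tasks, but they form the bulk of the actual write-up.
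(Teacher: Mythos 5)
Your proposal is correct and takes essentially the same route as the paper: the decomposition is read off from the splitting-degree grading of Theorem~\ref{thm:inv-graded-complex}, the degree-zero summand is matched with $\FA\KhCom(D;\Zpi)\cong\Fpi\KhCom_\pi(D)$ through the equivalences $I,P$ and $i,r$, and the invertible monomials $\permMM^a\permMS^b$ transport it onto the remaining summands, with the $\scalars$-action formulas following from the degrees of $\permMM$, $\permSS$, $\permMS$. The only cosmetic difference is that the paper identifies the degree-zero piece via the explicit natural isomorphism $r(M)\cong M\otimes\Zpi$, $m\mapsto m\otimes 1$ (inverse $m\otimes 1\mapsto\permMM^a\permMS^b\,m$), rather than by checking $r\circ\FA=\Fpi\circ P$ on generating cobordisms, and the $\permMS$-indexing issue you flag is indeed just the paper's labelling of the copies by monomials rather than by splitting degree.
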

\begin{proof}
	The~decomposition follows from Theorem~\ref{thm:inv-graded-complex}, so it remains to compute the~degree zero subcomplex. First, $r(M)$ is naturally isomorphic to $M\otimes\Zpi$ via $m\mapsto m\otimes 1$. Indeed, this map is linear over $\Zpi$, and its inverse sends $m\otimes 1$, with $\sdeg(m) = \sdegNum ab$, into $\permMM^a\permMS^b m$. Hence, $\FA\KhCom(D)_{0,0}$ is naturally isomorphic to $\FA\KhCom(D)\otimes\Zpi = \FA\KhCom(D;\Zpi)$.
\end{proof}

Given a~graded ring automorphism $\varphi\in\Aut_0(\scalars)$ we can replace the~chronological parameters $\permMM$, $\permSS$, and $\permMS$ with its images under $\varphi$, resulting in a~graded category $\scalars_\varphi\ChCob_0$ and a~chronological TQFT $\FA_{\!\varphi}\colon\scalars_\varphi\ChCob_0 \to \Mod{\scalars,0}$. As before, given a~link diagram $D$ we can construct the~generalized Khovanov complex $\KhCom_\varphi(D)$ in $\catAdd{\scalars_\varphi\ChCob}$. In the~view of Corollary~\ref{cor:KhCom-XYZ-vs-pi}, the~complexes $\FA\KhCom(D)$ and $\FA_{\!\varphi}\KhCom_\varphi(D)$ are equivalent link invariants if $\varphi(\permMM\permSS)=\permMM\permSS$ (i.e.\ if $\varphi|_{\scalars_0}=\id$). We shall now show they are in fact isomorphic.


\begin{proposition}\label{prop:change-of-params}
	Assume $\varphi(\permMM\permSS) = \permMM\permSS$. Then the~complexes of\/ $\scalars$-modules $\FA\KhCom(D)$ and $\FA_{\!\varphi}\KhCom_\varphi(D)$ are isomorphic for any link diagram $D$.
\end{proposition}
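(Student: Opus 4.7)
My plan is to reduce the comparison of the two complexes to their degree-$(0,0)$ subcomplexes via the equivalence $\Mod{\scalars,0}\simeq\Mod{\Zpi}$ of Lemma~\ref{lem:Mod-equiv}. First, I observe that because $\varphi\in\Aut_0(\scalars)$ preserves the splitting degree, the constructions of Sections~\ref{sec:s-degree}--\ref{sec:invariance} transfer verbatim to the $\varphi$-twisted category $\scalars_\varphi\ChCob$: the twisted relations remain homogeneous, so Propositions~\ref{prop:sdeg-vs-chronology} and~\ref{prop:FA-is-graded} apply with $\FA$ replaced by $\FA_{\!\varphi}$, and the equivalence $\scalars_\varphi\ChCob_0\simeq\Zpi\ChCob$ used in the proof of Theorem~\ref{thm:reduction} is formally identical to the untwisted one because $\varphi|_{\scalars_0}=\id$. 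Consequently, the twisted analog of Theorem~\ref{thm:reduction} yields a decomposition
\[
\FA_{\!\varphi}\KhCom_\varphi(D)\;\cong\;\bigoplus_{(a,b)\in\Z_2\times\Z}\FA_{\!\varphi}\KhCom_\varphi(D)_{a,b},
\]
each summand isomorphic, as a complex of $\Zpi$-modules, to the degree-$(0,0)$ piece.

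The crux is then to identify this twisted degree-$(0,0)$ piece with $\Fpi\KhCom_\pi(D)=\FA\KhCom(D)_{0,0}$. Comparing the formulas \eqref{eq:F-merge}--\eqref{eq:F-death} with their $\varphi$-twisted analogs, the only coefficients where $\FA_{\!\varphi}$ differs from $\FA$ are $\varphi(\permMM\permMS)$ (in the merge) and $\varphi(\permSS\permMS)$ (in the split). The hypothesis $\varphi(\permMM\permSS)=\permMM\permSS$ forces $\varphi(\permMM)/\permMM=\varphi(\permSS)/\permSS$, so that
\[
u\;:=\;\varphi(\permMM\permMS)/(\permMM\permMS)\;=\;\varphi(\permSS\permMS)/(\permSS\permMS)
\]
is a single element of the subring $\scalars_0$, indeed a unit there, corresponding to a unit of $\Zpi$ under the identification $\scalars_0\cong\Zpi$. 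I then build a $\Zpi$-linear chain isomorphism between the two degree-$(0,0)$ subcomplexes by rescaling the generator $v_-$ on each circle in each resolution $D_\xi$ by an appropriate power of $u$, the exponent being determined by the saddles encountered on a path in $\KhCube D$ from the base vertex to $\xi$. Applying Lemma~\ref{lem:Mod-equiv} promotes this isomorphism to the required isomorphism of complexes of $\scalars$-modules.

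The main obstacle is verifying that the rescaling is independent of the path through the cube: for the $\Zpi$-linear map defined above to be well-defined on each vertex, the accumulated power of $u$ along any two paths from the base vertex to $\xi$ must agree. This follows because $u\in\scalars_0$ commutes with the invertible scalars occurring in the sign assignment $\epsilon$, and the anticommutativity of $\KhCubeSigned D\epsilon$ reduces the check to the face relations of the cube, where the identity $\varphi(\permMM\permMS)\cdot\permSS\permMS = \varphi(\permSS\permMS)\cdot\permMM\permMS$ --- equivalent to our hypothesis --- provides exactly the needed coherence.
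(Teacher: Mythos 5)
Your high-level route --- decompose both complexes via the twisted analogue of Theorem~\ref{thm:reduction}, identify the degree-$\sdegNum00$ pieces, and promote back through Lemma~\ref{lem:Mod-equiv} --- is essentially the paper's, but the execution of the middle step has a genuine gap. The differential of the Khovanov complex is not assembled from the structure constants of \eqref{eq:F-merge}--\eqref{eq:F-split} alone: each edge is a saddle applied away from the first circle, so under $\FA$ it also carries the graded-tensor coefficients $\lambda(\deg g,\deg m)$ of \eqref{eq:graded-tensor-morphisms}, the coefficients of the symmetry $\tau$ used to reorder circles, and the value of the sign assignment (the twisted cube needs a sign assignment for $\varphi\circ\psi$, e.g.\ $\varphi\circ\epsilon$ --- a point you do not address). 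All of these involve $\permMM$, $\permSS$, $\permMS$ individually, so the discrepancy between the two complexes is not confined to one unit $u$ attached to merges and splits. This is fatal for your specific ansatz: on an all-$v_+$ generator, a merge tensored with $k$ identity factors contributes a coefficient $\permMM^{k}$ (times $\epsilon(\zeta)$), which $\varphi$ changes by the unit $(\varphi(\permMM)\permMM)^{k}$, in general $\neq 1$ (for instance $\pm\pi$ in $\Zpi$ for odd $k$ when $\varphi(\permMM)=\pm\permSS$). Since merges and splits never produce an all-$v_+$ output from an input containing a $v_-$, the all-$v_+$ lines map only to each other; a map that rescales only $v_-$'s is the identity on every all-$v_+$ generator and therefore cannot intertwine these matrix entries, no matter how the path-dependent exponents are chosen. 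Relatedly, your ``coherence identity'' $\varphi(\permMM\permMS)\permSS\permMS=\varphi(\permSS\permMS)\permMM\permMS$ merely restates that $u$ is well defined; the actual face relations of the cube involve $\psi$ and the $\lambda$-coefficients and are never checked.

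The repair, which is the paper's proof, is to rescale each standard generator $v_{i_k}\otimes\cdots\otimes v_{i_1}$ by $\bigl(\varphi(\permMM)/\permMM\bigr)^{a}\bigl(\varphi(\permMS)/\permMS\bigr)^{b}$, where $\sdegNum ab$ is its splitting degree in the shifted complex; note that both ratios occur separately, not only their product $u$, because the vertex shifts $\sdeg W_\xi$ are not multiples of $\sdegNum11$. With this choice the verification is immediate and uniform: the differential preserves the splitting degree, every coefficient $c$ of the differential is an integer times a monomial, and $\varphi(c)/c$ depends only on $\sdeg c$ because $\varphi(\permMM)/\permMM=\varphi(\permSS)/\permSS$ (this is where $\varphi(\permMM\permSS)=\permMM\permSS$ enters) and both this ratio and $\varphi(\permMS)/\permMS$ square to $1$. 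In particular no path-independence argument is needed, and the $\lambda$-factors, circle reorderings and the sign assignment are all handled at once.
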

\begin{proof}
	Decompose the~complexes as in Theorem~\ref{thm:reduction}. Then $\FA\KhCom(D)_{0,0}$ and $\FA_{\!\varphi}\KhCom_\varphi(D)_{0,0}$ are complexes of free $\Zpi$-modules, and $\varphi$ induces an~isomorphism between them. Indeed, $\pi$ acts on both complexes as multiplication by $\permMM\permSS=\varphi(\permMM\permSS)$. Thence, it is enough to extend the~equality in a~$\scalars$-linear way. Explicitly,
	\begin{equation}
		\FA\KhCom(D) \ni u \mapsto
			\big(\tfrac{\varphi(\permMM)}\permMM\big)^a
			\big(\tfrac{\varphi(\permMS)}\permMS\big)^b u \in \FA_\varphi\KhCom_\varphi(D)
	\end{equation}
	for a~generator $u=\vvv{i_1,*\dots,i_k}$ in degree $\sdeg(u)=\sdegNum ab$.\footnote{
		Here, $\sdeg(u)$ is the~degree of $u$ as an~element of graded $\FA\KhCom$, and it can be different as when $u$ is regarded as an~element $A^{\otimes k}$.
	}
\end{proof}

Denote by $\scalars_\varphi$ the~ring $\scalars$ with a~module structure twisted by $\varphi$, i.e.\ $k\cdot x := \varphi(k)x$. Every $\scalars$-module structure on $\Z$ can be obtained by taking a~tensor product $\scalars_\varphi\otimes\Zev$ or $\scalars_\varphi\otimes\Zodd$ for an~automorphism $\varphi$ fixing $\permMM\permSS$. For instance, if $\varphi(\permMM)=-\permMM$ and likewise for $\permSS$ and $\permMS$, then each parameter acts on $\Z':=\scalars_\varphi\otimes\Zev$ as $-1$.

\begin{corollary}
	Given a~$\scalars$-module structure on $\Z$, the~homology $\Kh(L;\Z)$ is either the~even Khovanov homology, if $\permMM\permSS$ acts on $\Z$ as identity, or the~odd Khovanov homology otherwise.
\end{corollary}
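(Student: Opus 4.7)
The plan is to read off the corollary directly from the decomposition in Theorem~\ref{thm:reduction}. A $\scalars$-module structure on $\Z$ is determined by three signs $\epsilon_\permMM,\epsilon_\permSS,\epsilon_\permMS\in\{\pm1\}$ recording how $\permMM,\permSS,\permMS$ act. I would start by noting that the subring $\scalars_0\subset\scalars$ is generated by $\permMM\permSS$ and isomorphic to $\Zpi$ via $\permMM\permSS\mapsto\pi$, so any $\scalars$-module structure on $\Z$ restricts to a $\Zpi$-module structure in which $\pi$ acts as the sign $\epsilon_\permMM\epsilon_\permSS$. Up to $\Zpi$-linear isomorphism there are then only two options: $\Z\cong\Zev$ exactly when $\epsilon_\permMM\epsilon_\permSS=+1$, and $\Z\cong\Zodd$ exactly when $\epsilon_\permMM\epsilon_\permSS=-1$.

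Next I would repackage Theorem~\ref{thm:reduction} as a natural isomorphism of $\scalars$-complexes
$$\FA\KhCom(D)\;\cong\;\scalars\otimes_\Zpi\Fpi\KhCom_\pi(D),$$
where each shifted summand $\FA\KhCom(D)_{a,b}$ corresponds to $\permMM^a\permMS^b\otimes\Fpi\KhCom_\pi(D)$ and the prescribed degree-shift isomorphisms recover multiplication by $\permMM$, by $\permSS=\pi\permMM$, and by $\permMS$. Tensoring over $\scalars$ with $\Z$ then collapses the $\scalars$ factor, yielding
$$\FA\KhCom(D)\otimes_\scalars\Z\;\cong\;\Fpi\KhCom_\pi(D)\otimes_\Zpi\Z.$$
The right hand side depends on $\Z$ only through its $\Zpi$-module structure; passing to homology and substituting $\Zev$ or $\Zodd$ for $\Z$ (which, by the previous paragraph, is an isomorphism of $\Zpi$-modules) identifies $\Kh(L;\Z)$ with $\Kh(L;\Zev)=\EKh(L)$ or with $\Kh(L;\Zodd)=\OKh(L)$ respectively. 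That is the statement of the corollary.

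The whole argument is a chain of formal identifications once Theorem~\ref{thm:reduction} is in hand, so I do not expect any genuine obstacle. The point requiring the most care is the repackaging in the second paragraph: Theorem~\ref{thm:reduction} records the decomposition only as a splitting of $\Zpi$-complexes together with explicit shift isomorphisms describing the action of $\scalars$, and one must verify that assembling these pieces really does produce the free $\scalars$-module $\scalars\otimes_\Zpi\Fpi\KhCom_\pi(D)$ as a $\scalars$-complex. Once that is made precise, the action of $\permMS$, which never enters the $\Zpi$-module structure of $\Z$, is absorbed automatically by the identification of shifted summands, and the dependence on $\Z$ reduces to the single invariant $\epsilon_\permMM\epsilon_\permSS$.
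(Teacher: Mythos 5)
Your argument is correct, and it takes a slightly different route than the paper. The paper deduces this corollary from Proposition~\ref{prop:change-of-params} together with the preceding remark that every $\scalars$-module structure on $\Z$ arises as $\scalars_\varphi\otimes\Zev$ or $\scalars_\varphi\otimes\Zodd$ for an automorphism $\varphi$ fixing $\permMM\permSS$: one twists the whole theory by $\varphi$, invokes the isomorphism $\FA\KhCom(D)\cong\FA_{\!\varphi}\KhCom_\varphi(D)$, and thereby reduces to $\Zev$ or $\Zodd$. You instead bypass the automorphism-twisting entirely and read the corollary off Theorem~\ref{thm:reduction}, repackaged as the statement that $\FA\KhCom(D)$ is the free base change $\scalars\otimes_{\Zpi}\Fpi\KhCom_\pi(D)$, so that $\FA\KhCom(D)\otimes_\scalars\Z\cong\Fpi\KhCom_\pi(D)\otimes_{\Zpi}\Z$ depends on $\Z$ only through the sign by which $\permMM\permSS$ acts. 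The verification you flag is indeed the only point needing care, and it works: with the basis $\permMM^a\permMS^b$ of $\scalars$ over $\scalars_0\cong\Zpi$, multiplication by $\permMM$, $\permSS=(\permMM\permSS)\permMM$ and $\permMS$ reproduces exactly the shift maps $\id$, $\cdot\pi$, $\id$ listed in Theorem~\ref{thm:reduction}, so the summands do assemble into the free $\scalars$-complex you claim. What the two approaches buy: the paper's route yields along the way the stronger Proposition~\ref{prop:change-of-params} (an isomorphism of $\scalars$-complexes for any parameter change fixing $\permMM\permSS$), which is of independent use; your route is shorter and makes more transparent why only the action of $\permMM\permSS$ on $\Z$ matters, reducing the corollary to flatness of $\scalars$ over $\Zpi$ and associativity of base change. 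Note, though, that both arguments ultimately rest on the same ingredient, the $\Z_2\times\Z$-grading and the decomposition of Theorem~\ref{thm:reduction}, since the paper's Proposition~\ref{prop:change-of-params} is itself proved from that decomposition.
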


\begin{remark}
	The~even and odd Khovanov homology are not equivalent. Hence, the~condition on $\varphi$ in Proposition~\ref{prop:change-of-params} is necessary.
\end{remark}
	
\begin{remark}
	Theorem~\ref{thm:reduction} is true for any chronological Frobenius system $(R,A)$ with $R$ supported in a~single weight 0. In particular, we can take the~algebra of dotted cobordisms $(R_\bullet, A_\bullet)$ \cite{ChCob}, as $R_\bullet$ is generated over $\scalars$ by $h$ and $t$ of degrees $\deg h = (-1,-1)$ and $\deg t = (-2,-2)$ respectively.
\end{remark}

\subsection{Duality}
Choose a~link diagram $D$. The~Khovanov homology of the~\emph{mirror image} $D^!$ is dual to the~one of $D$ \cite{KhHom}. Namely, there is an~isomorphism of complexes
\begin{equation}
	\Fev\KhCom(D^!) \cong \Hom(\Fev\KhCom(D),\Z),
\end{equation}
which follows from the~following two observations.
\begin{enumerate}
	\item\wrapfigure[r]{\psset{unit=3mm,labelsep=2pt}
	\begin{pspicture}(-6,-3)(6,4)
	\rput[c](-5,0){\Rnode{hor}{\begin{pspicture}[shift=-0.5](-1,-1)(1,1)
			\psbezier(-1,-1)(0,-0.1)(0,-0.1)(1,-1)
			\psbezier(-1, 1)(0, 0.1)(0, 0.1)(1, 1)
	\end{pspicture}}}
	\rput[c](5,0){\Rnode{vert}{\begin{pspicture}[shift=-0.5](-1,-1)(1,1)
			\psbezier(-1,-1)(-0.1,0)(-0.1,0)(-1,1)
			\psbezier( 1,-1)( 0.1,0)( 0.1,0)( 1,1)
	\end{pspicture}}}
	\rput[c](0,3){\Rnode{NWSE}{\begin{pspicture}[shift=-1](-1,-1)(1,1)
			\psline(-1,-1)(1,1)
			\psline[border=5\pslinewidth](-1,1)(1,-1)
	\end{pspicture}}}
	\rput[c](0,-3){\Rnode{NESW}{\begin{pspicture}(-1,-1)(1,1)
			\psline(-1,1)(1,-1)
			\psline[border=5\pslinewidth](-1,-1)(1,1)
	\end{pspicture}}}
	\diagline{->}{NWSE}{hor} \nbput{$\scriptstyle0$}
	\diagline{->}{NWSE}{vert}\naput{$\scriptstyle1$}
	\diagline{->}{NESW}{hor} \naput{$\scriptstyle1$}
	\diagline{->}{NESW}{vert}\nbput{$\scriptstyle0$}
	\diagline[linestyle=dotted]{<->}{NWSE}{NESW}\naput{$\scriptstyle!$}
	\end{pspicture}}
	Resolutions of crossings in $D^!$ are those of $D$, but with type $0$ and type $1$ interchanged, as illustrated to the~right. In particular, the~cube $\KhCube{D^!}$ looks like $\KhCube{D}$, but with all arrows reversed.
	\item The~Khovanov's algebra $A$ is self-dual: $A^*\cong A$ via $v_\pm^*\mapsto v_\mp$.
\end{enumerate}
A~similar phenomenon occurs in the~generalized case \cite{ChCob} with a~few differences. The~algebra $A$ over $\scalars$ is not strictly self-dual: $A^*\cong \overline A$ is algebras over $\overline\scalars$, where we exchange the~roles of $\permMM$ and $\permSS$.\footnote{
	In other words, $\overline\scalars = \scalars_\varphi$, where $\varphi(\permMM) = \permSS$, $\varphi(\permSS) = \permMM$, and $\varphi(\permMS)=\permMS$.
} For instance,
\begin{gather}
	\Delta^*(v_+^*\otimes v_-^*) = \permSS\permMS v_+^*, \quad\text{and}	\\
	\mu^*(v_-^*) = v_+^*\otimes v_-^* + \permMM\permMS v_-^*\otimes v_+^*.
\end{gather}
Likewise, the~duality between cubes $\KhCube{D^!}$ and $\KhCube{D}$ is realized by an~operation on chronological cobordisms $(\blank)^*\colon\kChCob \to \overline\scalars\ChCob$ that `reverses' the~chronology, i.e.\ $(W,\tau)^* := (W,\tau^*)$ with $\tau^*(t) := 1-\tau(t)$. Reversing a~cobordism permutes its degree components, $\chdeg W^* = (b,a)$ if $\chdeg W = (a,b)$, but it also intertwines the two disjoint unions, $(W\rdsum W')^* = W^*\ldsum W'^*$. This explains why the~roles of $\permMM$ and $\permSS$ are exchanged, but the~role of $\permMS$ is preserved.

When reversing the~chronology of a~cobordism one must also take care of orientations of critical points. Indeed, an~orientation of a~critical point $p\in W$ induces an~orientation of the~stable part of $T_pW^*$. We choose for the~unstable part the~complementary orientation with respect to the~outward orientation of the~cobordism $W$. Diagrammatically, color each region in the~complement of $W$ black or white, so that the~unbounded region is white and regions with same colors do not meet; then for saddle point $p$ rotate the~framing arrow in $W^*$ clockwise if the~region below $p\in W$ is white, and anticlockwise otherwise:
\begin{gather*}
	\psset{unit=8mm}
	\left(\begin{centerpict}(-0.1,-0.1)(1.3,1.3)
		\rput[tl](0,1.2){\textnormal{\scriptsize b}}%
		\rput(0.6,0.7){\textnormal{\scriptsize w}}%
		\cobordism[2](0,0)(M-R)
	\end{centerpict}\right)^{\!\!*} = \textcobordism[1](S-B)
		\hskip 2cm
	\left(\begin{centerpict}(-0.1,-0.1)(1.3,1.3)
		\rput[bl](0,0.0){\textnormal{\scriptsize b}}%
		\rput(0.6,0.5){\textnormal{\scriptsize w}}%
		\cobordism[1](0.4,0)(S-B)
	\end{centerpict}\right)^{\!\!*} = \textcobordism[2](M-R)
		\\[1ex]
	\psset{unit=8mm}
	\left(\begin{centerpict}(-0.1,-0.1)(1.3,1.3)
		\rput[tl](0,1.2){\textnormal{\scriptsize w}}%
		\rput(0.6,0.7){\textnormal{\scriptsize b}}%
		\cobordism[2](0,0)(M-R)
	\end{centerpict}\right)^{\!\!*} = \textcobordism[1](S-F)
		\hskip 2cm
	\left(\begin{centerpict}(-0.1,-0.1)(1.3,1.3)
		\rput[bl](0,0.0){\textnormal{\scriptsize w}}%
		\rput(0.6,0.5){\textnormal{\scriptsize b}}%
		\cobordism[1](0.4,0)(S-B)
	\end{centerpict}\right)^{\!\!*} = \textcobordism[2](M-L)
\end{gather*}
Since we want the~duality functor to be coherent with \eqref{rel:reverse-orientation} there is no choice left for births and deaths:
\begin{gather*}
	\psset{unit=1cm}
		\left(\begin{centerpict}(-0.1,-0.1)(0.5,0.7)
			\rput[br](0.05,0){\textnormal{\scriptsize b}}%
			\rput(0.2,0.4){\textnormal{\scriptsize w}}%
			\cobordism[0](0.4,0)(sB)
		\end{centerpict}\right)^{\!\!*} = \textcobordism[1](sD+)
			\hskip 2cm
		\left(\begin{centerpict}(-0.1,-0.1)(0.5,0.7)
			\rput[tr](0.05,0.6){\textnormal{\scriptsize b}}%
			\rput(0.2,0.2){\textnormal{\scriptsize w}}%
			\cobordism[1](0,0)(sD+)
		\end{centerpict}\right)^{\!\!*} = \textcobordism[0](sB)
			\hskip 2cm
		\left(\begin{centerpict}(-0.1,-0.1)(0.5,0.7)
			\rput[tr](0.05,0.6){\textnormal{\scriptsize b}}%
			\rput(0.2,0.2){\textnormal{\scriptsize w}}%
			\cobordism[1](0,0)(sD-)
		\end{centerpict}\right)^{\!\!*} = \permSS\textcobordism[0](sB)
			\\[1ex]
	\psset{unit=1cm}
		\left(\begin{centerpict}(-0.1,-0.1)(0.5,0.7)
			\rput[br](0.05,0){\textnormal{\scriptsize w}}%
			\rput(0.2,0.4){\textnormal{\scriptsize b}}%
			\cobordism[0](0.4,0)(sB)
		\end{centerpict}\right)^{\!\!*} = \textcobordism[1](sD-)
			\hskip 2cm
		\left(\begin{centerpict}(-0.1,-0.1)(0.5,0.7)
			\rput[tr](0.05,0.5){\textnormal{\scriptsize w}}%
			\rput(0.2,0.2){\textnormal{\scriptsize b}}%
			\cobordism[1](0,0)(sD-)
		\end{centerpict}\right)^{\!\!*} = \textcobordism[0](sB)
			\hskip 2cm
		\left(\begin{centerpict}(-0.1,-0.1)(0.5,0.7)
			\rput[tr](0.05,0.5){\textnormal{\scriptsize w}}%
			\rput(0.2,0.2){\textnormal{\scriptsize b}}%
			\cobordism[1](0,0)(sD+)
		\end{centerpict}\right)^{\!\!*} = \permSS\textcobordism[0](sB)
\end{gather*}

We showed in \cite{ChCob} that $\KhCubeSigned{D^!}\epsilon$, regarded as an~object in $\Kom(\overline\scalars\ChCob)$, is the~image of $\KhCubeSigned D\epsilon$ under the~above operation, which implies $\F_{\!\overline\scalars}\KhCom_{\overline\scalars}(D^!)\cong\F\KhCom(D)^*$. The~results of the~previous section allows us to switch $\overline\scalars$ back to $\scalars$

\begin{theorem}[Duality for generalized Khovanov homology]\label{thm:duality}
	Given a~link diagram $D$ and its mirror image $D^!$ there is an~isomorphism of complexes
	\begin{equation}\label{eq:duality-isomorphism}
		\FA\KhCom(D^!) \cong \FA\KhCom(D)^*,
	\end{equation}
	where $(C^*)^i := \Hom(C^{-i},\scalars)$ for a~chain complex $C$. In particular, the~odd Khovanov homology $\OKh(L)$ of a~link $L$ is dual to $\OKh(L^!)$, and similarly for $\Kh_\pi(L)$ and $\Kh_\pi(L^!)$.
\end{theorem}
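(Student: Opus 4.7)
The plan is to combine the~duality at the~level of $\overline\scalars$-coefficients already established in the~discussion preceding the~statement with the~reduction-of-parameters machinery supplied by Proposition~\ref{prop:change-of-params}. Concretely, the~chronology-reversing operation $(\blank)^*\colon\kChCob\to\overline\scalars\ChCob$ sends $\KhCubeSigned{D^!}\epsilon$ to the~dual of $\KhCubeSigned{D}\epsilon$, so applying the~TQFT $\F_{\!\overline\scalars}$ yields an~isomorphism of complexes of\/ $\overline\scalars$-modules
\begin{equation}\label{eq:dual-bar}
\F_{\!\overline\scalars}\KhCom_{\overline\scalars}(D^!)\;\cong\;\FA\KhCom(D)^*.
\end{equation}
All that remains is to trade the~twisted coefficient ring $\overline\scalars$ on the~left for $\scalars$.

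For this I would take the~involution $\varphi\in\Aut_0(\scalars)$ defined by $\varphi(\permMM)=\permSS$, $\varphi(\permSS)=\permMM$, and $\varphi(\permMS)=\permMS$, so that $\scalars_\varphi$ is precisely $\overline\scalars$. Since $\varphi$ fixes the~product $\permMM\permSS$, Proposition~\ref{prop:change-of-params}, applied to $D^!$, gives an~explicit isomorphism of complexes of\/ $\scalars$-modules
\begin{equation}\label{eq:twist-back}
\FA\KhCom(D^!)\;\cong\;\FA_{\!\varphi}\KhCom_\varphi(D^!)\;=\;\F_{\!\overline\scalars}\KhCom_{\overline\scalars}(D^!),
\end{equation}
sending a~homogeneous generator $u$ of splitting degree $\sdeg u=\sdegNum ab$ to $(\permMM\permSS)^a u$; the~contribution from $\permMS$ drops out because $\varphi(\permMS)=\permMS$, and $\permMM^2=1$ makes $(\permMM\permSS)^a$ well defined for $a\in\Z_2$. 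Composing \eqref{eq:twist-back} with \eqref{eq:dual-bar} yields both the~asserted isomorphism and a~fully explicit formula for it.

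For the~two specializations I would simply tensor through with $\Zpi$, respectively $\Zodd$, over $\scalars$. On $\Zpi$ the~generators $\permMM$ and $\permMS$ act as the~identity while $\permSS$ acts as $\pi$, so the~rescaling factor $(\permMM\permSS)^a$ reduces to $\pi^a$, which is invertible; the~same argument yields $(-1)^a$ over $\Zodd$. Taking homology gives the~dualities $\Kh_\pi(L)\cong\Kh_\pi(L^!)^*$ and $\OKh(L)\cong\OKh(L^!)^*$.

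The~only real obstacle is conceptual: one has to notice that the~ring $\overline\scalars$ appearing on the~left of~\eqref{eq:dual-bar} is precisely $\scalars_\varphi$ for the~involution $\varphi$ above, and that this involution satisfies the~hypothesis $\varphi(\permMM\permSS)=\permMM\permSS$ required by Proposition~\ref{prop:change-of-params}. Once that observation is in place, the~graded invariance (Theorem~\ref{thm:inv-graded-complex}) and the~decomposition~\eqref{eq:Kh-split} of Theorem~\ref{thm:reduction} guarantee that the~rescaling commutes with the~Khovanov differential, so the~rest of the~argument is just a~composition of two citations.
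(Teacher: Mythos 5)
Your proposal is correct and follows essentially the same route as the paper: the paper's proof is precisely the composition of Proposition~\ref{prop:change-of-params} for the involution swapping $\permMM$ and $\permSS$ (which fixes $\permMM\permSS$) with the chronology-reversal duality $\F_{\!\overline\scalars}\KhCom_{\overline\scalars}(D^!)\cong\FA\KhCom(D)^*$ from the preceding discussion, and your explicit factor $(\permMM\permSS)^a$ matches the formula the paper records after the theorem. The only point the paper spells out that you gloss over is the base-change step for the specializations, namely $\Hom(F,\scalars)\otimes R\cong\Hom(F\otimes R,R)$ for free $F$, which is exactly what justifies your ``tensor through with $\Zpi$ or $\Zodd$'' conclusion.
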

\begin{proof}
	Proposition~\ref{prop:change-of-params} and the~discussion above give a~sequence of isomorphisms
	\begin{equation}
		\FA\KhCom(D^!) \cong \FA_{\overline\scalars}\KhCom_{\overline\scalars}(D^!) \cong \FA\KhCom(D)^*.
	\end{equation}
	The~cases of $\OKh$ and $\Kh_\pi$ follows from an~isomorphism $\Hom(F,\scalars)\otimes R \cong\Hom(F\otimes R, R)$ for any a~free module $F$ and a~ring homomorphism $\scalars\to<1em> R$.
\end{proof}

The~duality isomorphism \eqref{eq:duality-isomorphism} is given explicitly as
\begin{equation}
	\FA(D^!_\zeta)\ni u \longmapsto (\permMM\permSS)^a\,u^* \in \FA(D_{\bar\zeta})^*,
\end{equation}
where $u = \vvv{i_1,*\dots,i_k}$ has degree $\sdeg u = \sdegNum ab$. For the~other version of Khovanov homology, simply replace $\permMM\permSS$ with either $\pi$, for the~unified homology, or $(-1)$ for the~odd one. Note the~role of the~splitting degree: although it does not descend directly to $\OKh(L)$ nor $\Kh_\pi(L)$, it controls the~duality isomorphism.


\begin{thebibliography}{ORS13}\label{chpt:bibliography}
	
	\bibitem[BN02]{DrorKhHom}
	D.~Bar-Natan,
	\emph{On Khovanov's categorification of the Jones Polynomial},
	Alg. Geom. Topol. 2:337--370, 2002.
	E-print: \arXiv*{math/0201043}{0201.5043}.

	\bibitem[BN05]{DrorCobs}
	D.~Bar-Natan,
	\emph{Khovanov homology for tangles and cobordisms},
	Geom. Topol. 9:1443--1499, 2005.
	E-print: \arXiv*{math/0410495}{0410.5495}.
	
	\bibitem[Kho00]{KhHom}
	M.~Khovanov,
	\emph{A~categorification of the~Jones polynomial},
	Duke Math. J. 101 (2000), no. 3, 359--426.
	E-print: \arXiv*{math/9908171}{9908.5171}
	
	\bibitem[Kho06]{KhLinkCobs}
	M.~Khovanov,
	\emph{An~invariant of tangle cobordisms},
	Trans. Amer. Math. Soc. 358 (2006), 315-327.
	E-print: \arXiv*{math.QA/0207264}{0207.5264}
	
	\bibitem[ORS13]{OddKh}
	P.~Ozsv\'ath, J.~Rasmussen, Z.~Szab\'o,
	\emph{Odd Khovanov homology},
	Alg. Geom. Top. 13:1465--1488, 2013.
	E-print: \arXiv{0710.4300}.
	
	\bibitem[Put08]{KhUnified}
	K.~K.~Putyra,
	\emph{Cobordisms with chronologies and a~generalized Khovanov complex},
	Masters' Thesis, Jagiellonian University, 2008.
	E-print: \arXiv{1004.0889}.

	\bibitem[Put13]{ChCob}
	K.~K.~Putyra,
	\emph{A~$2$-category of chronological cobordisms and odd Khovanov homology},
	submitted to Banach Center Publications.
	Preprint: \arXiv{1310.1895}.
	
	\bibitem[Shu11]{ShumComp}
	A.~Shumakovitch,
	\emph{Patterns in odd Khovanov homology},
	J. Knot Theory Ramifications, 20:203--222, 2011.
	E-print: \arXiv{1101.5607}.

\end{thebibliography}
\end{document}